\newtheorem{thm}{Theorem}[section]
\newtheorem{cor}[thm]{Corollary}
\newtheorem{prop}[thm]{Proposition}
\newtheorem{lemma}[thm]{Lemma}
\newtheorem{theorem}[thm]{Theorem}
\newtheorem{remark}[thm]{Remark}
\newtheorem{proposition}[thm]{Proposition}
\newtheorem{definition}[thm]{Definition}
\newtheorem{corollary}[thm]{Corollary}
\newenvironment{proof}{{\bf Proof:}}{\hfill$\square$\vskip.5cm}
\newenvironment{proofof}{}{\hfill$\square$\vskip.5cm}
\newcommand{\R}{\mathbb{R}}
\newcommand{\N}{\mathbb{N}}
\newcommand{\C}{\mathbb{C}}
\newcommand{\E}{\mathbf{E}}
\newcommand{\Hil}{\mathcal{H}}
\title{Phase Uniqueness for the Mallows Measure on Permutations}
\author{Shannon Starr$^{1}$ and Meg Walters$^{2}$\\
\small
${}^1$ University of Alabama at Birmingham, Applied Mathematics, Birmingham, AL 35294--1170\quad \url{slstarr@uab.edu}\\
\small
${}^2$ University of Rochester, Department of Mathematics, Rochester, NY 14627\quad \url{meg.walters@rochester.edu}
}
\date{September 12, 2017}
\begin{document}

\maketitle

\begin{abstract}
\setcounter{section}{0}
For a positive number $q$ the Mallows measure on the symmetric group is the 
probability measure on $S_n$  such that $P_{n,q}(\pi)$ is proportional to $q$-to-the-power-$\mathrm{inv}(\pi)$
where $\mathrm{inv}(\pi)$ equals the number of inversions: $\mathrm{inv}(\pi)$ equals the number of pairs $i<j$ such that $\pi_i>\pi_j$.
One may consider this as a mean-field model from statistical mechanics.
The weak large deviation principle may replace the Gibbs variational principle
for characterizing equilibrium measures.
In this sense, we prove absence of phase transition, i.e., phase uniqueness.
\end{abstract}

\section{Introduction} 

The Mallows measure on permutations is a non-uniform measure which may be motivated in various ways.
It arises in non-parametric statistics \cite{Mallows}
\begin{equation}
\label{eq:Pdef}
\forall \pi \in S_n\, ,\quad
P_{n,q}(\pi)\, =\, Z_{n,q}^{-1}\,  
q^{\operatorname{inv}(\pi)}\, ,\ \text{ where }\
\operatorname{inv}(\pi)\, \stackrel{\mathrm{def}}{:=}\,  
\sum_{i=1}^{n-1} \sum_{j=i+1}^{n} \mathbf{1}_{(-\infty,0)}(\pi_j-\pi_i)\, ,
\end{equation}
for some parameter $q \in (0,\infty)$.

A good recent review of several important examples of non-uniform measures is \cite{Mukherjee}.
In addition, in that reference, Mukherjee considered thermodynamic limits, and he derived large deviation principles.
That is the starting point for us.

The Mallows model has also been studied for other reasons.
Diaconis and Ram showed a connection to the Hecke algebra \cite{DiaconisRam}.
The Mallows measure is a measure on permutations. 
But it is also closely related to the ``blocking measure'' which are invariant measures of the asymmetric
exclusion process (ASEP).
In fact the ASEP may be viewed as a projection of a biased card shuffling algorithm
introduced by Diaconis and Ram.
This was exploited by Benjamini, Berger, Hoffman and Mossel \cite{BBHM},
using David Bruce Wilson's height functions \cite{Wilson} to bound the mixing time for the card shuffling model,
starting from the mixing time for the ASEP.

At a simpler level, one may try to use information about the ASEP invariant measures to gain information
about the Mallows measure.
The ASEP invariant measures are well-known. See, for example, Chapter VIII, Section 5 of \cite{Liggett}.
This is a well-known approach, following Wilson \cite{Wilson}.

In the present note, we consider the large deviation principle for a continuous version of the Mallows model,
$\mu_{n,\beta}$ on $([0,1]^2)^n$ such that
\begin{equation}
\label{eq:H1}
\begin{gathered}
d\mu_{n,\beta}((x_1,y_1),\dots,(x_n,y_n))\,
=\, Z_n(\beta)^{-1} e^{-\beta H_n((x_1,y_1),\dots,(x_n,y_n))}\, ,\\
H_n((x_1,y_1),\dots,(x_n,y_n))\,
=\, \frac{1}{n-1}\, \sum_{i=1}^{n-1} \sum_{j=i+1}^{n} h((x_i,x_j),(y_i,y_j))\, \\
h((x_1,y_1),(x_2,y_2))\, =\, \mathbf{1}_{(-\infty,0)}((x_1-x_2)(y_1-y_2))\, .
\end{gathered}
\end{equation}
Mukherjee found the 
large deviation rate function $\mathcal{I}_{\beta} : \mathcal{M}_{+,1}([0,1]^2)$ for the empirical measure of $((x_1,y_1),\dots,(x_n,y_n))$
on $[0,1]^2$
$$
\mathfrak{m}^{(n)}_{((x_1,y_1),\dots,(x_n,y_n))}(\bullet)\,
=\, \frac{1}{n}\, \sum_{k=1}^{n} \mathbf{1}_{\bullet}((x_k,y_k))\, .
$$
We start with his formula for the rate function, and  we show that there is a unique optimizer.
There is a straightforward connection between $\mu_{n,\beta}$ and $P_{n,q}$.
Therefore, this gives a direct probabilistic method to find the weak limit law for $\mu_{n,\beta}$.

\subsection{Discussion of proof technique and relation to known results}
Following the approach suggested by the height functions, we consider the 4-square problem.
Given $\theta \in (0,1)$ define $L_1(\theta) = [0,\theta]$ and $L_2(\theta) = (\theta,1]$.
Given $\theta_1,\theta_2 \in (0,1)$ define $\Lambda_{ij}(\theta_1,\theta_2) = L_i(\theta_1) \times L_j(\theta_2)$ in $[0,1]^2$.
Then, finally, given $t_{11},t_{12},t_{21},t_{22} \geq 0$ such that $t_{11}+t_{12}+t_{21}+t_{22}=1$,
define
$$
W_{\theta_1,\theta_2}(t_{11},t_{12},t_{21},t_{22})\,
=\, \{\nu \in \mathcal{M}_{+,1}([0,1]^2)\, :\, \forall i,j \in \{1,2\}\, ,\ 
\nu(\Lambda_{ij}(\theta_1,\theta_2))=t_{ij}\}\, .
$$
We give an explicit formula for $\mathcal{I}_{\beta}(W_{\theta_1,\theta_2}(t_{11},t_{12},t_{21},t_{22}))$.
Intuitively, this is the simplest possible problem one can consider,
starting from Mukherjee's formula for $\mathcal{I}_{\beta}$.
Moreover, for each $(\theta_1,\theta_2) \in (0,1)^2$, there is a unique choice of $t_{ij}^*(\theta_1,\theta_2)$ maximizing this formula.
Moreover, 
$$
R_{\beta}(\theta_1,\theta_2)\, =\, t_{11}^*(\theta_2,\theta_2)\, ,
$$
defined for each $\theta_1,\theta_2 \in [0,1]$ (and extended continuously at the boundary)
corresponds to the joint cumulative distribution function of a measure $\rho_{\beta} \in \mathcal{M}_{+,1}([0,1]^2)$: $\rho_{\beta}([0,\theta_1]\times [0,\theta_2]) = R_{\beta}(\theta_1,\theta_2)$.

As a corollary, elementary results imply that $\mathfrak{m}^{(n)}_{((x_1,y_1),\dots,(x_n,y_n))}$
converge in distribution to the non-random measure $\rho_{\beta}$,
when for each $n$ we have $((x_1,y_1),\dots,(x_n,y_n))$ distributed according to $\rho_{\beta}$.
Then, due to the connection to the Mallows measure, the same result holds
when applied to $\mathfrak{m}^{(n)}_{((x_1,y_1),\dots,(x_n,y_n))}$, if, for each $n$
we define $x_k = k/n$ and $y_k = \pi_k/n$ for $k=1,\dots,n$, where we select a random 
permutation $\pi \in S_n$, distributed according to $P_{n,q_n}$, as long
as $(q_1,q_2,\dots)$ is a sequence such that $\lim_{n \to \infty} n(1-q_n) = \beta$.
Note that $\beta \in \R$ is fixed.
(For $\beta<0$ this means that $q_n$ is slightly greater than $1$ instead of less than $1$,
as it would be for $\beta>0$.)
So this result also gives a simpler, more direct proof of an old result from \cite{Starr},
which had previously been proved by an obscure method.

\subsubsection{The Mallows model is a frustration free, mean-field model}

The Hamiltonian in (\ref{eq:H1}) is a mean-field Hamiltonian.
Mean-field Hamiltonians have the property that considering a subsystem,
the inverse-temperature $\beta$ needs to be rescaled because of the explicit
dependence of $H_n$ on $n$.
The consideration of a sub-system is sometimes known as the cavity method for 
complicated problems, which are most amenable to inductive analysis, removing one
particle at a time.
See for instance \cite{MPV} as an indication of the physics approach to this method
or \cite{Tal} for the mathematical side.
Temperature renormalization means that if there is an explicit formula for the optimizer
of the Mallows model on $[0,1]^2$ in the thermodynamic limit,
then restricting attention to the sub-squares $\Lambda_{ij}(\theta_1,\theta_2)$,
the restriction of the measure to these sets may be an optimizer
for different choices of $\beta$, due to dilution on the sub-squares.
(We will refer to the $\Lambda_{ij}(\theta_1,\theta_2)$'s as ``sub-squares''
even though they are rectangles.)

Of course, since the model is a mean-field model, there is an interaction between all particles
in $[0,1]^2$, including between different sub-squares.
But then there is a special symmetry of the model.
In two dimensions, it is common to see conformally invariant models,
which is the symmetry one finds from local rotational symmetry
as well as dilation covariance.
For the Mallows model, instead of $\mathrm{SO}(2)$ symmetry
the group which leaves the model invariant is the group of hyperbolic
rotations $\mathrm{SO}^+(1,1)$, because 
one may rescale the two dimensions as long as the area remains fixed,
in what is sometimes known as a ``squeeze transformation.''

Moreover, one can factorize the degrees of freedom of a measure on a square
by first considering its $x$ and $y$ marginals, and then considering the measure
in the square with those marginals, which we call the ``coupling measure.''
In the Mallows model, the different sub-squares only interact through their marginals.
We think of the $x$ and $y$ marginals as data living on the boundary of each square,
and the coupling measure as data interior.
Then the choice of the coupling measure in the interior of each subsquare is not affected
by the choice of the boundary marginal measures.
So for each sub-square, 
the coupling measure is an un-restricted optimizer of the rate function $I_{\beta_{ij}}$,
but at a diluted value of the inverse-temperature $\beta_{ij}$.
In this sense, the problem is ``frustration free.''
Moreover, this shifts the problem to determining the optimal choices of $\beta_{11},\beta_{12},\beta_{21},\beta_{22}$, which are in turn explicit functions of $t_{11},t_{12},t_{21},t_{22}$
due to the explicit formula for the pressure $p(\beta) = \lim_{n \to \infty} n^{-1} \ln(Z_n(\beta))$.
So, in this sense the data $R_{\beta}(\theta_1,\theta_2)$
may be deduced just from the general formula for the pressure $p : \R \to \R$.

\subsubsection{Discrete symmetry and integrability}
The main point of this paper is to give a simple proof of the uniqueness of the optimizer of $I_{\beta}$,
exploiting the symmetry just described in the continuum limit.
But there is a related symmetry of the Mallows measure even for finite $\beta$.
For example, it is related to the Fisher-Yates-Knuth algorithm for perfectly simulating
permutations.
This is also related to the powerful bounds and approximations of Bhatnagar and Peled \cite{BhatnagarPeled}.
The symmetry may also be deduced from the ``height-function''
approach of Wilson \cite{Wilson},
which was exploited by \cite{BBHM},
relating the biased card-shuffling algorithm of Diaconis and Ram \cite{DiaconisRam}
to the Markov chain projection, which is the asymmetric exclusion process (ASEP).
(Of course, in the present paper, we only consider invariant measures, not the actual stochastic
dynamics, which is at least one level higher.)

The ASEP possesses reversible invariant measures.
Using these, one may make a 
similarity transform of the generator to obtain a symmetric matrix.
This is unitarily equivalent to the anisotropic Heisenberg
model, known as the XXZ model, with the anisotropy parameter $\Delta=(q+q^{-1})/2$
with ``kink soliton'' boundary conditions.
For more information on the XXZ model, see \cite{PasquierSaleur,KomaNachtergaele}.
This relation has a long history.
See, for example, \cite{CaputoMartinelli}.

In Section \ref{sec:Outlook}, we will give a few more details of the relation between the XXZ model and the ASEP, in order to interpret
our main results in terms of the XXZ model.

Also, for the relation between the Mallows model and the ASEP, an important
reference point is to consider $q=1$ where one sees the relation between the uniform measure
on permutations and the SEP, where Liggett's stirring process gives a graphical representation,
which one may see in Chapter VIII of \cite{Liggett}.
(This also leads to duality, and in this regard, one may also refer for the $q\neq 1$ case to \cite{Giardina}.)

The most important points for us are 2: first the XXZ model is also frustration free \cite{GottsteinWerner}.
This is important, because that implies certain properties such as a spectral gap \cite{Nachtergaele}
and certain correlation structure in the ground state \cite{NachtergaeleSims}.
Secondly, there are certain equations related to the thermodynamic limits of the XXZ model
such as the Liouville PDE \cite{RibeiroKorepin}.
The Liouville PDE is known to have the boundary symmetry we mentioned in the last subsection,
related to the frustration free property \cite{LeviMartinaWinternitz}.
(Also, see Section 1.1 of the published version of Tao's blog, year 3, \cite{TaoY3},
for an elementary derivation of the symmetries and solution of the Liouville equation.)
In this paper we avoid the partial differential equations.
But that is an alternative route which has been explored before \cite{Starr}.

We do not try to relate the frustration free property of the XXZ model to the frustration free
property of the LDP optimization problem.
But we will state, in an appendix, the discrete version of the frustration free property
of the LDP.

\subsubsection{Outline for the rest of the paper}

A brief summary of our paper is this.
It is known that the ``pressure'' for the Mallows model is explicitly calculable.
More precisely, this is related to the $q$-Stirling's formula,
which is also related to the dilogarithm (although we will not discuss that).
Because of the symmetry we can reduce the 4-square problem to the calculation
of the pressure at  diluted inverse-temperatures.
This dilution, or ``temperature renormalization,'' arises in all mean-field
problems (where the statistical mechanics setup is  deficient because
the Hamiltonian itself explicitly depends on the system size).
The important point is that, due to the frustration free property, we may
reduce the four-square problem of calculating $I_{\beta}(W_{\theta_1,\theta_2}((t_{ij})_{i,j=1}^{2})$
to a 1-dimension optimization problem
related to the density of points in each subsquare, not the sub-permutations.
Even though the weak LDP is not strictly convex, this particular problem is.
That explains why there is a unique minimizer in this problem, despite the lack of convexity.

Although the problem is related to  interesting topics in quantum statistical mechanics,
the main results and tools, henceforth, will be purely probabilistic.
We do not make any further reference to quantum spin systems (such as the XXZ model)
or partial differential equations (such as the Liouville PDE).
The approach may be viewed as purely probabilistic by probabilists.

\section{Set-up}
\label{sec:Setup}

Let $\lambda$ be the standard Lebesgue measure on $[0,1]$.
Let $\lambda^{\otimes 2}$ denote the standard Lebesgue measure on $[0,1]^2$,
which is a Borel probability measure on $\R^2$.

For any $n \in \{2,3,\dots\}$ and $\beta \in \R$, let us define the measure
$\mu_{n,\beta} \in \mathcal{M}_{+,1}(([0,1]^2)^n)$ to be the absolutely continuous
measure with respect to $(\lambda^{\otimes 2})^{\otimes n}$ such that
$$
\frac{d \mu_{n,\beta}}{d(\lambda^{\otimes 2})^{\otimes n}}\big((x_1,y_1),
\dots,(x_n,y_n)\big)\,
=\, \frac{1}{Z_n(\beta)}\, \exp\left[-\beta H_n\big((x_1,y_1),\dots,(x_n,y_n)\big)\right]\, ,
$$
where
\begin{equation}
    \label{eq:HnDef} 
    H_n\big((x_1,y_1),\dots,(x_n,y_n)\big)\, 
    =\, \frac{1}{n-1}\, \sum_{i=1}^{n-1} \sum_{j=i+1}^{n} h\big((x_i,y_i),(x_j,y_j)\big)\, ,
\end{equation}
for 
\begin{equation}
\label{eq:hDef1}
h\big((x_i,y_i),(x_j,y_j)\big)\,
=\, \boldsymbol{1}_{(-\infty,0)}\big((x_i-x_j)(y_i-y_j)\big)\, ,
\end{equation}
and where $Z_n(\beta)$ is a normalization constant
$$
Z_n(\beta)\,
=\, \int_{([0,1]^2)^n} \exp\left[-\beta H_n\big((x_1,y_1),\dots,(x_n,y_n)\big)\right]\,
\prod_{i=1}^{n} d\lambda^{\otimes 2}(x_i,y_i)\, .
$$
The main result of this paper relates to the weak large deviation principle for this sequence of measures.
\begin{remark}
\label{rem:betaRescale}
We have chosen to exclude the sum of the diagonal terms $\frac{1}{2} \sum_{i=1}^{n} h((x_i,y_i),(x_i,y_i))$ in the definition of $H_n$ in (\ref{eq:HnDef}), and subsequently divided by $n-1$ instead of $n$.
This choice does not make any macroscopic difference in the large $n$ limit.
It is consistent with some conventions in some parts of mathematical physics and the study of statistical mechanics of spin systems.
Later, to match up with large deviation theory results, it will be preferable to normalize by $n$ instead of $n-1$.
This may be accomplished by a rescaling of the inverse temperature parameter $\beta \in \R$ by a factor $(1-n^{-1})$.
\end{remark}
Let us define the finite-volume approximation to the pressure
$$
p_n(\beta)\, =\, \frac{1}{n}\, \ln\left(Z_n(\beta)\right)\, .
$$

\subsection{Relation to the Mallows measure on permutations}

Given a parameter $q \in (0,\infty)$,
the Mallows  measure on permutations is a probability measure on the symmetric group.
More precisely, the probability mass function  is 
$P_{n,q} : S_n \to \R$,
where, for a given permutation $\pi  = (\pi_1,\dots,\pi_n) \in S_n$, we define the inversion number
and $P_{n,q}$ as
\begin{equation}
\label{eq:Pnq}
\operatorname{inv}(\pi)=\#\{(i,j)\,  :\, i<j \ \text { and }\ \pi_i>\pi_j\}\ 
\text{ and }\
P_{n,q}(\pi)\, =\, \frac{q^{\operatorname{inv}(\pi)}}{Z_{n,q}}\, , \text{ where }\
Z_{n,q}\, =\,  \sum_{\pi\in S_n} q^{\operatorname{inv}(\pi)}\, .
\end{equation}
Diaconis and Ram showed that the measure $P_{n,q}$ is related to the Iwahori-Hecke algebra \cite{DiaconisRam}.
One related fact is the special formula for the normalization:
\begin{equation}
\label{eq:DiaconisRam}
Z_{n,q}\,  =\, \prod_{k=1}^{n} \frac{1-q^k}{1-q}\, .
\end{equation}
The $q$-integers are defined as $[k]_q = (1-q^k)/(1-q)$ for each $k \in \N = \{1,2,\dots\}$.
The $q$-factorial function is 
$$
[n]_q!\, =\, \prod_{k=1}^{n} [k]_q\, .
$$
Hence $Z_{n,q} = [n]_q!$.

The relation between the measure $\mu_{n,\beta}$ in $\mathcal{M}_{+,1}(([0,1]^2)^n)$
and the probability mass function $P_{n,q} : S_n \to [0,1]$
is elucidated in the following lemma.
\begin{lemma}
For each $n \in \N$ and each $\beta \in \R$,
$$
p_n(\beta)\, =\, \frac{1}{n}\, \ln\left(\frac{[n]_q!}{n!}\right) \bigg|_{q=\exp(-\beta/(n-1))}\, .
$$
\end{lemma}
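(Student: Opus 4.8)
The plan is to evaluate the integral defining $Z_n(\beta)$ directly, by decomposing $([0,1]^2)^n$ according to the ``relative order type'' of the configuration and recognizing the Mallows weight inside. First I would observe that, for $(\lambda^{\otimes 2})^{\otimes n}$-almost every configuration $((x_1,y_1),\dots,(x_n,y_n))$, all the $x_k$ are distinct and all the $y_k$ are distinct. On this full-measure set, set $r(i) = \#\{k : x_k \le x_i\}$ and $s(i) = \#\{k : y_k \le y_i\}$, and define $\pi \in S_n$ by $\pi(r(i)) = s(i)$. The key point is that $h\big((x_i,y_i),(x_j,y_j)\big)=1$ precisely when the pair $\{i,j\}$ is discordant, i.e.\ $r(i)<r(j)$ but $s(i)>s(j)$, or the reverse; summing over $i<j$ in the definition of $H_n$ (the order of $i,j$ is irrelevant since $h$ is symmetric) thus gives $H_n = \tfrac{1}{n-1}\operatorname{inv}(\pi)$, because an unordered discordant pair is exactly an inversion of $\pi$.

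Next I would show that $\pi$ is uniformly distributed over $S_n$ under $(\lambda^{\otimes 2})^{\otimes n}$. Since $x_1,\dots,x_n$ are i.i.d.\ with a continuous law, the rank map $i \mapsto r(i)$ is a uniformly random element of $S_n$; likewise $i \mapsto s(i)$ is uniform, and the two are independent because the $x$- and $y$-marginals of $\lambda^{\otimes 2}$ are independent. Writing $\pi = s \circ r^{-1}$ and conditioning on the value of $r$, we see $\pi$ is uniform. Equivalently, the integrand $\exp[-\beta H_n]$ is constant on each of the $n!$ cells $\{\pi = \pi_0\}$, $\pi_0 \in S_n$, and each such cell has $(\lambda^{\otimes 2})^{\otimes n}$-measure $1/n!$.

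Combining these two facts,
\[
Z_n(\beta)\, =\, \sum_{\pi_0 \in S_n} \frac{1}{n!}\, \exp\!\left[-\frac{\beta}{n-1}\,\operatorname{inv}(\pi_0)\right]\, =\, \frac{1}{n!}\, \sum_{\pi_0 \in S_n} q^{\operatorname{inv}(\pi_0)}\, =\, \frac{Z_{n,q}}{n!}\, =\, \frac{[n]_q!}{n!}\, ,
\]
with $q = \exp(-\beta/(n-1))$, the third equality being the definition (\ref{eq:Pnq}) of $Z_{n,q}$ and the last being the Diaconis--Ram formula (\ref{eq:DiaconisRam}). Taking $\tfrac1n\ln(\cdot)$ of both sides yields the claim. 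The only mildly delicate ingredient is the uniformity of $\pi$ (equivalently, the equal Lebesgue measure of the cells); I expect to dispatch it in a couple of lines via the independence of the two rank permutations together with the elementary fact that the composition of an arbitrary permutation with a uniformly random one is uniformly random, which avoids any explicit change-of-variables or Jacobian computation.
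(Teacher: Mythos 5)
Your proof is correct and is essentially the same as the paper's: both reduce the integral over $([0,1]^2)^n$ to a sum over $S_n$ via the relative order type of the configuration and then invoke the Diaconis--Ram identity $Z_{n,q}=[n]_q!$. The only cosmetic difference is that the paper obtains this reduction by symmetrizing the integrand over permutations of the $x$-coordinates, whereas you partition the domain into $n!$ equal-measure cells on which $H_n$ is constant; these are dual descriptions of the same computation.
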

\begin{proof}
Since the Lebesgue measure is permutation invariant, we may symmetrize to calculate
$$
e^{np_n(\beta)}\, =\, \int_{([0,1]^2)^n} \frac{1}{n!}\, \sum_{\pi \in S_n} 
\exp\left[-\beta H_n\big((x_{\pi_1},y_1),\dots,(x_{\pi_n},y_n)\big)\right]\, \prod_{i=1}^{n} d\lambda(x_i)
\prod_{j=1}^{n} d\lambda(y_j)\, .
$$
For Lebesgue-a.e.\ choice of $(x_1,y_1),\dots,(x_n,y_n)$, we have
$$
\sum_{\pi \in S_n} 
e^{-\beta H_n\big((x_{\pi_1},y_1),\dots,(x_{\pi_n},y_n)\big)}\,
=\, 
\sum_{\pi \in S_n} 
\left(e^{-\beta/(n-1)}\right)^{\#\{(i,j)\, :\, i<j \text{ and } (x_{\pi_i}-x_{\pi_j})(y_i-y_j)<0\}}\, .
$$
But this quantity is precisely $Z_{n,q}$ for $q=\exp(-\beta/(n-1))$. So the result follows 
from (\ref{eq:DiaconisRam}).
\end{proof}
This proof demonstrates the relation between $\mu_{n,\beta}$ and $P_{n,q}$:
$$
\frac{d\mu_{n,\beta}}{d(\lambda^{\otimes 2})^{\otimes n}}\big((x_1,y_1),\dots,(x_n,y_n)\big)\,
=\, P_{n,q}(\pi)\, ,\ a.s.\, ,
$$
where $q = \exp(-\beta/(n-1))$ and $\pi = \pi\big((x_1,y_1),\dots,(x_n,y_n)\big)$
is the $(\lambda^{\otimes 2})^{\otimes n}$-almost surely unique permutation such that
$$
y_{\pi_i} \leq y_{\pi_j}\quad \Leftrightarrow\quad x_i \leq x_j\, .
$$

\begin{corollary}
\label{cor:pLimit}
For each $\beta \in \R$, the sequence $(p_n(\beta))_{n=1}^{\infty}$ converges to the limit
\begin{equation}
\label{eq:pFormula}
p(\beta)\,
=\, \int_0^1 \ln\left(\frac{1-e^{-\beta x}}{\beta x}\right)\, dx\, .
\end{equation}
\end{corollary}
\begin{proof}
A more general and precise result is true, which we will mention immediately after this proof.
This easy result follows from
$$
\frac{1}{n} \ln\left(\frac{[n]_q!}{n!}\right)_{q=\exp(-\beta/n)}\,
=\, \ln\left(\frac{1-e^{-\beta/n}}{\beta n}\right)-\frac{1}{\beta} \sum_{k=1}^{n} \ln\left(\frac{1-e^{-\beta x_{n,k}}}{\beta x_{n,k}}\right)\, \Delta x_n\, ,
$$
where
$$
x_{n,k}\, =\, \frac{k}{n}\ \text{ and }\ \Delta x_n\, =\, \frac{1}{n}\, .
$$
The first term is easily seen to converge to $0$.
The second term converges to the Riemann-Stieltjes integral.
Note that we rescaled $\beta$ by $(n-1)/n$ in this formula (for reasons as in Remark \ref{rem:betaRescale}). But the limiting formula (involving the integral)
is continuous in $\beta$. So this rescaling does not matter. 
\end{proof}
A more general and precise result than this one is true.
It is called the $q$-Stirling
formula.
It was first proved by Moak \cite{Moak}.
We will discuss this further in the Outlook, Section \ref{sec:Outlook},
since it is related to the quantitative version of our main result,
which may also be useful for the studying fluctuations, especially in the 
singular scaling of Bhatnagar and Peled from their paper \cite{BhatnagarPeled}.

\section{Statement of main results}

For two measures $\mu,\nu$ on a Borel probability
space on a compact metric space $\mathcal{X}$,
let us define the relative entropy in the usual way
$$
S(\mu\, |\, \nu)\, =\, \begin{cases} -\infty & \text{ if $\mu \not\ll \nu$,}\\
-\int_{\mathcal{X}} \ln\left(\frac{d\mu}{d\nu}(x)\right)\, d\mu(x) & \text{ if $\mu \ll \nu$.}
\end{cases}
$$
The way we have defined it, this is the negative of the Kullback-Leibler divergence.
Define the function $\mathcal{E} : \mathcal{M}_{+,1}([0,1]^2) \to \R$ as follows
\begin{equation}
\label{eq:calEdef}
\mathcal{E}(\nu)\, =\, \frac{1}{2} \int_{[0,1]^2} \int_{[0,1]^2} h\big((x_1,y_1),(x_2,y_2)\big)\, d\nu(x_1,y_1)\, d\nu(x_2,y_2)\, ,
\end{equation}
where $h$ is as defined before in (\ref{eq:hDef1}).
Then $\mathcal{E}(\nu)$ is the expectation of the energy in a product state $\nu \otimes \nu$.
\begin{lemma}
\label{lem:levelSets}
For each $\beta \in \R$, the function $\widetilde{\mathcal{I}}_{\beta} : \mathcal{M}_{+,1}([0,1]^2) \to \R \cup \{-\infty\}$ defined as 
\begin{equation}
\label{eq:ItildeDef}
\widetilde{\mathcal{I}}_{\beta}(\nu)\, =\, -S(\nu|\lambda^{\otimes 2})+\beta \mathcal{E}(\nu)
\end{equation}
has the following properties:
\begin{itemize}
\item[(a)] $\widetilde{\mathcal{I}}_{\beta}$ is lower semi-continuous on $\mathcal{M}_{+,1}([0,1]^2)$, and
\item[(b)] $\widetilde{\mathcal{I}}_{\beta}$ has compact level sets.
\end{itemize}
\end{lemma}
The properties are well-known for the relative entropy, which is the negative of the Kullback-Liebler divergence, relative to the uniform measure.
Therefore, at $\beta=0$, the lemma is trivial.

If $\mathcal{E}$ were continuous, then this conclusion would be immediate for all $\beta\neq 0$, as well.
But $h$ is discontinuous on the set of $((x_1,y_1),(x_2,y_2))$ in $[0,1]^2\times [0,1]^2$ where $(x_1-x_2)(y_1-y_2)=0$.
Therefore, $\mathcal{E}$ is not continuous.
However, we do know that $\mathcal{E}$ is bounded.
Moreover, one may
demonstrate that on the level sets of $\widetilde{\mathcal{I}}_0$, it is continuous.
That suffices, as one may show.
We will not prove this simple lemma.

Then the following is an important consequence of general principles.
\begin{proposition} 
\label{prop:Ellis}
Suppose $\beta \in \R$ is any fixed number.  For any closed subset $A \subseteq \mathcal{M}_{+,1}([0,1]^2)$,
\begin{multline*}
\limsup_{n \to\infty} \frac{1}{n} \ln\left[ \mu_{n,\beta(1-n^{-1})}\left(\left\{\big((x_1,y_1),\dots,(x_n,y_n)\big) \in ([0,1]^2)^n\, :\, \frac{1}{n} \sum_{k=1}^n \delta_{(x_k,y_k)} \in A\right\}\right)\right] \\
\leq\, \sup_{\nu \in A} \left(-\widetilde{\mathcal{I}}_{\beta}(\nu)- p(\beta)\right)\, ,
\end{multline*}
and for any open subset $A \subseteq \mathcal{M}_{+,1}([0,1]^2)$,
\begin{multline*}
\liminf_{n \to\infty} \frac{1}{n} \ln\left[ \mu_{n,\beta(1-n^{-1})}\left(\left\{\big((x_1,y_1),\dots,(x_n,y_n)\big) \in ([0,1]^2)^n\, :\, \frac{1}{n} \sum_{k=1}^n \delta_{(x_k,y_k)} \in A\right\}\right)\right] \\
\geq\, \sup_{\nu \in A} \left(-\widetilde{\mathcal{I}}_{\beta}(\nu)- p(\beta)\right)\, .
\end{multline*}
\end{proposition}
The rescaling of $\beta$ by the factor $(1-n^{-1})$ is related to the comment in Remark \ref{rem:betaRescale}.
There 
is also 
the following easy corollary. 
\begin{corollary}
\label{cor:Ellis}
Suppose $(\beta_n)_{n \in \N}$ is a real sequence such that $\lim_{n \to \infty} \beta_n = \beta$ for some $\beta \in \R$.
Then, for any closed subset $A \subseteq \mathcal{M}_{+,1}([0,1]^2)$,
\begin{multline*}
\limsup_{n \to\infty} \frac{1}{n} \ln\left[ \mu_{n,\beta_n}\left(\left\{\big((x_1,y_1),\dots,(x_n,y_n)\big) \in ([0,1]^2)^n\, :\, \frac{1}{n} \sum_{k=1}^n \delta_{(x_k,y_k)} \in A\right\}\right)\right] \\
\leq\, \sup_{\nu \in A} \left(-\widetilde{\mathcal{I}}_{\beta}(\nu)- p(\beta)\right)\, ,
\end{multline*}
and for any open subset $A \subseteq \mathcal{M}_{+,1}([0,1]^2)$,
\begin{multline*}
\liminf_{n \to\infty} \frac{1}{n} \ln\left[ \mu_{n,\beta_n}\left(\left\{\big((x_1,y_1),\dots,(x_n,y_n)\big) \in ([0,1]^2)^n\, :\, \frac{1}{n} \sum_{k=1}^n \delta_{(x_k,y_k)} \in A\right\}\right)\right] \\
\geq\, \sup_{\nu \in A} \left(-\widetilde{\mathcal{I}}_{\beta}(\nu)- p(\beta)\right)\, .
\end{multline*}
\end{corollary}
This corollary is easy using monotonicity and continuity of the large deviation rate functions with respect to the parameter $\beta$.
We will skip the proofs of the proposition and corollary.
(A complete detailed proof is available for an earlier version online.)
Using the corollary, we see {\em a posteriori} that the difference between using $\beta$ and $\beta (1-n^{-1})$, as was done in Proposition \ref{prop:Ellis}, does not matter in the 
$n \to \infty$ limit.

If we considered the discrete version of the Mallows measure, instead of the continuous case, then
this theorem would follow as a special case of a more general theorem proved by Mukherjee in \cite{Mukherjee}. 
We will discuss this more, momentarily.
Mukherjee also noted that the result had previously been proved by Trashorras \cite{Trashorras}.
\begin{remark}
\label{rem:ForRef1}
In the present paper we consider the continuous case of measures $\mu_{n,\beta}$, as we defined
them in (\ref{eq:H1}).
Mukherjee and Trashorras considered a discrete case, which is similar.
The difference is that in the continuous case 
 we consider the random empirical measure
$\mathfrak{m}^{(n)}_{(x_1,y_1),\dots,(x_n,y_n)}$ when the random point set $((x_1,y_1),\dots,(x_n,y_n))$ is $\mu_{n,\beta}$-distributed.
In the discrete setting one considers the Mallows measure $P_{n,q}$ on $S_n$ for $q$ satisfying
the relation $q=q(n)$ such that $\lim_{n \to \infty} n^{-1} (1-q(n))=\beta$, and then considers the 
random measure $n^{-1} \sum_{i=1}^{n} \delta_{(i/n,\pi_i/n)}$ as the analogue of the random empirical measure, where $\pi \in S_n$ is $P_{n,q}$-distributed.
\end{remark}

\begin{remark}
\label{rem:ForRef1b}
A referee has pointed out that in the discrete case considered in \cite{Mukherjee,Trashorras} the large deviation rate function is changed in a simple way:
it becomes infinite except for measures
satisfying that both marginals are uniform; for measures where both marginals are uniform it is as above.
That is because in a permutation $\pi \in S_n$, the marginals of $n^{-1} \sum_{i=1}^{n} \delta_{(i/n,\pi_i/n)}$ are uniform on $\{1/n,\dots,n/n\}$.
\end{remark}

For this result, Proposition \ref{prop:Ellis}, one may refer to a beautiful monograph \cite{Ellis}.
Ellis considered large deviation principles for mean-field statistical mechanical models.
Then this proposition follows almost exactly from Theorems II.7.1 and II.7.2 on pages 51-52 of Ellis's monograph,
which is the section on Varadhan's lemma.

The technical issue is that to apply Varadhan's lemma, one usually assumes that the perturbing function
(which we would call the Hamiltonian) is bounded and a continuous function with respect to the topology used to state the large
deviation principle for the {\em a priori} sequence of measures.
But in the present paper the Hamiltonian is not continuous.
This is a frequent issue. For example, it complicates the proof of the first large deviation result for random
matrices in \cite{AndersonGuionnetZeitouni} (which is Theorem 2.6.1 in the section on large deviations starting on page 70).
For us, the Hamiltonian is bounded.
One can proceed by showing that it is well-approximated by continuous functions.

A referee has pointed out that in a standard reference by Dembo and Zeitouni this has been carried out
for the contraction principle as opposed to Varadhan's lemma \cite{DemboZeitouni}. (The two results are complementary pillars
of large deviation theory.)
In this section 4.2 (starting on page 126) the approximations would be called exponential approximations.
One may split any argument into two pieces: first proving that one has exponential approximations, and then
establishing the desired result (Varadhan's lemma in this case) follows from that.
This is useful, and if any reader wants complete details they may consult the old version of the article on the arXiv.

The large deviation rate function is
\begin{equation}
\label{eq:I}
\mathcal{I}_{\beta}(\nu)\, =\, \widetilde{\mathcal{I}}_{\beta}(\nu)+ p(\beta)\,
=\, -\left(S(\nu|\lambda^{\otimes 2}) - \beta \mathcal{E}(\nu) - p(\beta)\right)\, .
\end{equation}
Part of Ellis's approach to this proposition entails the fact that $\mathcal{I}_{\beta}(\nu)$ has 
infimum
equal to $0$, as is needed on basic probabilistic grounds.
The large deviation rate function $\mathcal{I}_{\beta}$ is lower-semi continuous.
Therefore, any {\em infimizing sequence} possesses a limit point which is a minimizer.
In particular, there is at least one minimizer.
Sometimes we will denote this existential minimizer as
$\widetilde{\nu}^*_{\beta} \in \mathcal{M}([0,1]^2)$.
In other words, $\widetilde{\nu}^*_{\beta}$ will be a minimizer, which we know must exist
by these general principles.
But the notation is not meant to imply that there are no other minimizers, which is not immediately guaranteed
by general principles, at least not at the outset.
(After we establish uniqueness, we will change notation to $\nu^*_{\beta}$.)

\begin{remark}
\label{rem:ForRef3}
The nature of our short paper is two-fold. First, we establish uniqueness.
Second, in the process of doing that, we discover a  natural way of determining the explicit
formula for the unique measure which we will later call $\nu^*_{\beta}$.
The approach is to apply standard calculus methods as in variational analysis.
In the course of the arguments, the explicit formula will appear.
\end{remark}

At various points of the argument, one could cross-check with the known formula as 
an exercise. 
This is an effective approach to gauging the proof.
(This has been pointed out by a referee.)
We will not interrupt the proof to do this.

\smallskip
\noindent
\underline{\em Statement of main result:}
Our main result is uniqueness of the minimizer.
We state this in a sequence of steps.
The main step involves the ``four-square problem,''
which we describe, shortly.
First we would like to comment, briefly, on the papers of Mukherjee and Trashorras, which
are critical for our own short article.

\subsection{Short discussion}
Let us quickly comment on one important aspect of Mukherjee's paper \cite{Mukherjee}, first.
He not only calculated the large deviation principle for the Mallows model, but also 
for many other non-uniform measures on $S_n$, of which the Mallows measure is just one.
Mallows, himself, considered various measures on $S_n$, and what we are calling the ``Mallows measure,''
here, is actually just the Mallows measure relative to a particular distance function, which is the minimum number
of nearest neighbor transpositions (i.e., transpositions of the form $(i,i+1)$ for some $i \in \{1,\dots,n-1\}$, sometimes
called Coxeter generators) needed to transform a given permutation into the identity permutation.
Mukherjee has explained all of this.
Trashorras has also considered many generalizations of the uniform measure on permutations, too.
In this sense, they are the same.
But Mukherjee also considered properties of the large deviation problems.

We would also like to mention that Trashorras was motivated by models of permutations arising from Adams, Bru, Dorlas and K\"onig
relating to Bose condensation \cite{ABK,ABK2,AD,AK}. Another important reference is by Betz and Ueltschi \cite{BetzUeltschi}.
Following a methodology of Ueltschi, they are also related to quantum spin systems \cite{GoldschmidtUeltschiWindridge}.
But we have not found a direct connection to the Mallows model, yet, since the cycle type is more important than the inversion
number for all of those applications. Nevertheless, there is a direct relation between the Mallows measure and the XXZ model
that we mentioned in the introduction. We will return to this in the appendix.

There is a particular type of statistic, which one may call ``linear statistics'' for which
Mukherjee proved uniqueness of optimizers of the LDP, in those cases.
He considered measures $Q_{n,\theta}(\pi) = \exp[\theta\sum_{i=1}^{n} f(i/n,\pi_i/n) - \ln(Z_n(f,\theta))]$
for a given continuous function $f : [0,1]^2 \to \R$.
In this case, the large deviation rate function is equal to the Kullback-Leibler divergence (which is the negative of the relative entropy, relative to the uniform measure)
plus a linear functional of the measure.
But the Kullback-Leibler divergence is strictly convex, and addition of a bounded linear functional does not affect this.
One might expect that if there is uniqueness of the optimizer of the LDP for the Mallows measure that this follows from convexity.
But, the rate functional $\mathcal{I}_{\beta}$ is not strictly convex on $\mathcal{M}([0,1]^2)$. 
(Note that if we describe everything other than the Kullback-Leibler divergence as an effective Hamiltonian, then this is quadratic
in the measure, somewhat like the logarithmic potential although less singular, not linear. Hence it need not preserve convexity.)
We will give an example calculation to show this in the appendix.
Nevertheless, there is a class of events that does lead to convexity for a 1-parameter family of sub-problems.
That is how we proceed.
This is the 4-square problem, which we now describe.

\subsection{Four square problem}
For any $\theta \in (0,1)$, define subsets of $[0,1]$ as
$$
L_1(\theta)\, =\, [0,\theta]\, ,\qquad
L_2(\theta)\, =\, (\theta,1]\, .
$$
For each point $(\theta_1,\theta_2) \in (0,1)^2$,
we define four rectangular subsets of $[0,1]^2$:
\begin{equation}
\label{eq:defLambbda}
\Lambda_{ij}(\theta_1,\theta_2)\, =\, L_i(\theta_1) \times L_j(\theta_2)\, ,
\text{ for $i,j \in \{1,2\}$.}
\end{equation}
Let $\Sigma_4 = \{(t_{11},t_{12},t_{21},t_{22}) \in [0,1]^4\, :\, t_{11}+t_{12}+t_{21}+t_{22}=1\}$.
Given a point in $M$,
we define a Borel subset of $\mathcal{M}_{+,1}([0,1]^2)$, as
$$
W_{\theta_1,\theta_2}(t_{11},t_{12},t_{21},t_{22})\,
=\, 
\left\{\nu \in \mathcal{M}_{+,1}([0,1]^2)\, :\,
\forall i,j \in \{1,2\}\, ,\
\nu(\Lambda_{ij}(\theta_1,\theta_2)\, =\, t_{ij}
\right\}
$$
What we call the ``four-square problem'' is to calculate the large deviation of this set.
Solving this problem for all possible values $\theta_1,\theta_2 \in (0,1)^2$
and all $(t_{11},t_{12},t_{21},t_{22}) \in \Sigma_4$ will lead to all
the minimizers of $\mathcal{I}_{\beta}$.
\begin{theorem}
\label{thm:4square}
For each $\beta \in \R$,
and each $(\theta_1,\theta_2) \in (0,1)^2$, and each $(t_{11},t_{12},t_{21},t_{22}) \in \Sigma_4$,
$$
\min\{ \mathcal{I}_{\beta}(\nu)\, :\,
\nu \in W_{\theta_1,\theta_2}(t_{11},t_{12},t_{21},t_{22})\}\,
=\, \widetilde{\Phi}_{\beta}(\theta_1,\theta_2;t_{11},t_{12},t_{21},t_{22})\, ,
$$
where
\begin{equation}
\label{eq:4squareC}
\begin{split}
\widetilde{\Phi}_{\beta}(\theta_1,\theta_2;t_{11},t_{12},t_{21},t_{22})\, 
&\stackrel{\mathrm{def}}{:=}\, p(\beta) 
+ \sum_{i,j=1}^{2} t_{ij} \ln\left(\frac{t_{ij}}{|\Lambda_{ij}|}\right) 
+
\sum_{i,j=1}^{2} t_{ij} p(\beta t_{ij}) \\
&\qquad - (t_{11}+t_{12}) p(\beta (t_{11}+t_{12})) - (t_{11}+t_{21}) p(\beta (t_{11}+t_{21}))\\
&\qquad
- (t_{12}+t_{22}) p(\beta (t_{12}+t_{22})) - (t_{21}+t_{22}) p(\beta (t_{21}+t_{22}))\\
&\qquad + \beta t_{12} t_{21}\, .
\end{split}
\end{equation}
\end{theorem}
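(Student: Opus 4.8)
The plan is to start from the identity $\mathcal{I}_{\beta}(\nu) = p(\beta) - \big(S(\nu\,|\,\lambda^{\otimes 2}) - \beta\,\mathcal{E}(\nu)\big)$ and to bound the bracketed functional from above over $W_{\theta_1,\theta_2}(t_{11},t_{12},t_{21},t_{22})$, with a matching explicit construction. First I would decompose any $\nu$ in this set as $\nu = \sum_{i,j} t_{ij}\,\nu_{ij}$, where $\nu_{ij}$ is the normalized restriction of $\nu$ to $\Lambda_{ij}(\theta_1,\theta_2)$. Since the cells are disjoint, relative entropy splits as
$$S(\nu\,|\,\lambda^{\otimes 2}) = -\sum_{i,j} t_{ij}\ln\frac{t_{ij}}{|\Lambda_{ij}|} + \sum_{i,j} t_{ij}\, S(\nu_{ij}\,|\,u_{ij}),$$
where $u_{ij}$ is the uniform probability measure on $\Lambda_{ij}$; this already produces the term $\sum_{i,j} t_{ij}\ln(t_{ij}/|\Lambda_{ij}|)$ of $\widetilde{\Phi}_{\beta}$ (interpreting $0\ln 0 = 0$). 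For the energy, write $\mathcal{E}(\nu) = \tfrac12\sum_{(i,j),(k,l)} t_{ij}t_{kl}\iint h\,d\nu_{ij}\,d\nu_{kl}$ and evaluate the pair contributions cell by cell using the explicit form of $h$: on $\Lambda_{11}\times\Lambda_{22}$ one has $h\equiv 0$ and on $\Lambda_{12}\times\Lambda_{21}$ one has $h\equiv 1$, which together give the constant $\beta t_{12}t_{21}$; for two cells sharing an $x$-block (resp.\ a $y$-block), $h$ reduces to the indicator of an inequality between their $x$-marginals (resp.\ $y$-marginals); and the diagonal terms give $\sum_{i,j} t_{ij}^2\,\mathcal{E}(\nu_{ij})$.

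Thus $S(\nu\,|\,\lambda^{\otimes 2})$ and $\mathcal{E}(\nu)$ are determined by the cell marginals together with the cell ``coupling measures.'' Here I would invoke the copula factorization: each $\nu_{ij}$ is equivalent to a triple $(m^x_{ij},m^y_{ij},C_{ij})$ --- its $x$- and $y$-marginals and a coupling $C_{ij}$ with uniform marginals --- for which $S(\nu_{ij}\,|\,u_{ij}) = S(m^x_{ij}\,|\,u^x_i) + S(m^y_{ij}\,|\,u^y_j) + S(C_{ij}\,|\,\lambda^{\otimes 2})$ by additivity of relative entropy under the monotone change of variables, while $\mathcal{E}(\nu_{ij})$ depends only on $C_{ij}$ since $h$ sees only the signs of coordinate differences and is invariant under coordinatewise increasing reparametrizations. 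The couplings $C_{ij}$ then enter only through $t_{ij}S(C_{ij}\,|\,\lambda^{\otimes 2}) - \beta t_{ij}^2\mathcal{E}(C_{ij})$. Because $\inf\mathcal{I}_{\beta'} = 0$ gives, via Proposition~\ref{prop:Ellis}, that $\sup_{\rho}\big(S(\rho\,|\,\lambda^{\otimes 2}) - \beta'\mathcal{E}(\rho)\big) = p(\beta')$, and because the copula bound $S(\rho\,|\,\lambda^{\otimes 2})\le S(C_\rho\,|\,\lambda^{\otimes 2})$ with $\mathcal{E}(\rho)=\mathcal{E}(C_\rho)$ shows this supremum is unchanged if one restricts to uniform-marginal measures, one obtains $\sup_{C}\big(S(C\,|\,\lambda^{\otimes 2}) - \beta t_{ij}\mathcal{E}(C)\big) = p(\beta t_{ij})$. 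This is the ``temperature renormalization'' step and it supplies the term $\sum_{i,j} t_{ij}\,p(\beta t_{ij})$.

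What remains is a finite-dimensional variational problem over the cell marginals, and the cross-energy terms decouple it into four one-dimensional ``bipartite inversion'' problems --- two along the $x$-axis (on $L_1(\theta_1)$ and on $L_2(\theta_1)$) and two along the $y$-axis --- each of the form
$$\max_{\mu_1,\mu_2}\Big(c_1 S(\mu_1\,|\,\lambda) + c_2 S(\mu_2\,|\,\lambda) - \beta c_1 c_2\,\mathbf{P}[U_1 > U_2]\Big),\qquad U_i\sim\mu_i\ \text{on}\ [0,1].$$
The crux is the lemma that this maximum equals $(c_1+c_2)\,p(\beta(c_1+c_2)) - c_1\,p(\beta c_1) - c_2\,p(\beta c_2)$. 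I would prove it either from the Euler--Lagrange equations (the optimizers have densities $f_i\propto\exp(-\beta c_{3-i} F_{3-i})$, which one solves using the explicit formula \eqref{eq:pFormula}), or --- more cleanly --- by recognizing the left side, through the $P_{n,q}\leftrightarrow\mu_{n,\beta}$ dictionary of Section~2 and the bounds of Proposition~\ref{prop:Ellis}, as the $n^{-1}\ln$-asymptotics of the Gaussian $q$-binomial $\binom{n}{c_1 n}_q$ with $q=\exp(-\beta/(n-1))$, which in turn follow from the $q$-Stirling formula applied at the rescaled inverse temperatures $\beta c_1,\ \beta c_2$. Substituting the four values (with $c$-pairs $(t_{11},t_{12}),(t_{21},t_{22}),(t_{11},t_{21}),(t_{12},t_{22})$) back in and collecting terms reproduces $\widetilde{\Phi}_{\beta}$ exactly, including the signs of the $p(\beta(t_{11}+t_{12}))$-type terms. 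This chain of upper bounds holds for \emph{every} $\nu$ in the set, giving $\mathcal{I}_{\beta}(\nu)\ge\widetilde{\Phi}_{\beta}$; for equality I would build an explicit minimizer $\nu^*$ by taking each $C_{ij}$ to be the (existing, uniform-marginal) optimizer at inverse temperature $\beta t_{ij}$ and each pair of cell marginals to be the optimizer of the corresponding bipartite problem, checking $\nu^*\in W_{\theta_1,\theta_2}(t_{11},t_{12},t_{21},t_{22})$ directly from the construction.

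The obstacles I anticipate are: (i) the one-dimensional bipartite lemma, which carries the real ``frustration-free'' content and must be established on its own --- the $q$-Stirling route keeps it short but needs the clean statement of the $q$-binomial asymptotics; (ii) the measure-theoretic care needed to push the copula factorization and the additivity of relative entropy through for arbitrary (not necessarily absolutely continuous, possibly atomic) $\nu$, and to interpret the $-\infty$ cases consistently; and (iii) the attainment of the minimum --- i.e.\ upgrading $\inf$ to $\min$ --- since $W_{\theta_1,\theta_2}(t_{11},t_{12},t_{21},t_{22})$ is not weakly closed, which is precisely what the explicit construction of $\nu^*$ circumvents.
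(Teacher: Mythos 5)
Your decomposition is structurally the same as the paper's: split the objective over the four cells, factor each cell measure into its two marginals plus a uniform-marginal coupling (the paper's $\mathfrak{N}$, $\widehat{\nu}$ machinery of Proposition~\ref{prop:scaling} and Corollary~\ref{cor:normalized}), optimize the couplings away using $\sup_\rho\big(S(\rho\,|\,\lambda^{\otimes 2})-\beta'\mathcal{E}(\rho)\big)=p(\beta')$ --- exactly the paper's step of dropping the $t_{ij}\,\mathcal{I}_{\beta t_{ij}}(\widehat{\nu}^{(i,j)})\ge 0$ terms in Lemma~\ref{lem:4square} via Proposition~\ref{prop:Ellis} --- and reduce to four decoupled marginal--interaction problems; the bookkeeping of the $\ln(t_{ij}/|\Lambda_{ij}|)$, $\beta t_{12}t_{21}$, and pressure terms all matches. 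The genuine divergence is in how you would prove what you call the bipartite inversion lemma, i.e.\ the content of the paper's Corollaries~\ref{cor:2square} and~\ref{cor:2squareB}. You propose to attack it head-on, either by solving the coupled Euler--Lagrange/Riccati system $f_i\propto\exp(\pm\beta c_{3-i}F_{3-i})$ and substituting back, or by identifying the value with $q$-Stirling asymptotics of a $q$-binomial coefficient, which needs its own Varadhan-type argument for the one-dimensional marginal process. The paper does neither: Corollary~\ref{cor:2square} \emph{lifts} arbitrary interval marginals $\mu,\widetilde\mu$ to a two-dimensional measure $\nu=\theta\,\mathfrak{N}(\widetilde\nu^*_{\beta\theta},F_\mu,F_\kappa)+(1-\theta)\,\mathfrak{N}(\widetilde\nu^*_{\beta(1-\theta)},F_{\widetilde\mu},F_{\widetilde\kappa})$ built from the merely-existential optimizers $\widetilde\nu^*_{\beta'}$, then applies the exact two-square identity of Lemma~\ref{lem:2square} together with $\mathcal{I}_\beta(\nu)\ge 0$ from Proposition~\ref{prop:Ellis}. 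That argument is softer and shorter --- the only analytic inputs are the existence of some minimizer of each $\mathcal{I}_{\beta'}$ and nonnegativity of the rate function; no ODEs and no $q$-Stirling --- whereas your routes make the "frustration-free" content explicit at the cost of more computation. (Your $q$-binomial route is essentially what the paper relegates to Appendix~B as an alternative derivation.) Your observation that $W_{\theta_1,\theta_2}$ is not weakly closed, so $\inf$ becomes $\min$ only by explicit construction of a minimizer $\nu^*$, is exactly how the paper closes the argument as well.
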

Actually a subset of the four-square problems suffices to find the minimizers of $\mathcal{I}_{\beta}$.
But the formula for the optimum in general is an additional result from the present analysis.
Moreover, one may obtain a formula for the unique optimizer, which we will describe later.

\begin{remark}
We thank an anonymous referee for pointing out the utility in explicating the optimizers, themselves, which 
we will do in Subsection \ref{subsec:demo} and Section \ref{sec:Outlook}.
\end{remark}

Given a measure $\nu \in \mathcal{M}_{+,1}([0,1]^2)$, 
let us define the $x$ and $y$ marginals as $\nu_X,\nu_Y \in \mathcal{M}_{+,1}([0,1])$ defined as
\begin{equation}
\label{eq:marginals}
\nu_X(\cdot)\, =\, \nu(\cdot \cap [0,1])\
\text{ and }\
\nu_Y(\cdot)\, =\, \nu([0,1]\cap \cdot)\, .
\end{equation}
\begin{theorem}
\label{thm:Lebesgue}
For each $\beta \in \R$, any 
$\nu \in \mathcal{M}_{+,1}([0,1]^2)$ satisfying
$\mathcal{I}_{\beta}(\nu)  = 0$,
$\nu_X = \nu_Y = \lambda$.
\end{theorem}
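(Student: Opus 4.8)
The plan is to feed the measure $\nu$ through the four-square formula of Theorem~\ref{thm:4square} at every scale $(\theta_1,\theta_2)$, and then to separate out the $(\theta_1,\theta_2)$-dependence of $\widetilde{\Phi}_{\beta}$ using the chain rule for relative entropy. Suppose $\mathcal{I}_{\beta}(\nu)=0$ (in particular $\nu\ll\lambda^{\otimes 2}$, since otherwise $\mathcal{I}_{\beta}(\nu)=+\infty$). Fix $(\theta_1,\theta_2)\in(0,1)^2$, put $t_{ij}=\nu(\Lambda_{ij}(\theta_1,\theta_2))\in\Sigma_4$, and write $u_1=t_{11}+t_{12}=\nu_X([0,\theta_1])$, $u_2=t_{21}+t_{22}=1-u_1$, $v_1=t_{11}+t_{21}=\nu_Y([0,\theta_2])$, $v_2=t_{12}+t_{22}=1-v_1$; these are exactly the row- and column-sums appearing in (\ref{eq:4squareC}). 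Since $\nu\in W_{\theta_1,\theta_2}(t_{11},t_{12},t_{21},t_{22})$, Theorem~\ref{thm:4square} gives
\[
0=\mathcal{I}_{\beta}(\nu)\ \geq\ \widetilde{\Phi}_{\beta}(\theta_1,\theta_2;t_{11},t_{12},t_{21},t_{22}).
\]

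Next I isolate the $(\theta_1,\theta_2)$-dependence. The only place $\theta_1,\theta_2$ enter the right side of (\ref{eq:4squareC}) is through $|\Lambda_{ij}|=\ell_i m_j$ with $(\ell_1,\ell_2)=(\theta_1,1-\theta_1)$, $(m_1,m_2)=(\theta_2,1-\theta_2)$, and only in the single term $\sum_{i,j}t_{ij}\ln(t_{ij}/|\Lambda_{ij}|)$. The chain rule
\[
\sum_{i,j=1}^{2}t_{ij}\ln\frac{t_{ij}}{\ell_i m_j}\ =\ \sum_{i,j=1}^{2}t_{ij}\ln\frac{t_{ij}}{u_i v_j}\ +\ \sum_{i=1}^{2}u_i\ln\frac{u_i}{\ell_i}\ +\ \sum_{j=1}^{2}v_j\ln\frac{v_j}{m_j}
\]
then produces a decomposition
\[
\widetilde{\Phi}_{\beta}(\theta_1,\theta_2;t_{11},t_{12},t_{21},t_{22})\ =\ \widehat{\Phi}_{\beta}(t_{11},t_{12},t_{21},t_{22})\ +\ K(u_1\,\|\,\theta_1)\ +\ K(v_1\,\|\,\theta_2),
\]
where $K(r\,\|\,s):=r\ln(r/s)+(1-r)\ln((1-r)/(1-s))\geq 0$, with equality iff $r=s$, and where $\widehat{\Phi}_{\beta}$ is obtained from (\ref{eq:4squareC}) by replacing $\sum_{i,j}t_{ij}\ln(t_{ij}/|\Lambda_{ij}|)$ by the mutual-information term $\sum_{i,j}t_{ij}\ln(t_{ij}/(u_iv_j))$. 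The point is that $\widehat{\Phi}_{\beta}$ depends only on $(t_{11},t_{12},t_{21},t_{22})$ and $\beta$, not on $(\theta_1,\theta_2)$.

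The crux is then the claim that $\widehat{\Phi}_{\beta}\geq 0$ on all of $\Sigma_4$. If $u_1,v_1\in(0,1)$, apply the decomposition above with the \emph{re-centered} scale $(\theta_1,\theta_2)=(u_1,v_1)$: both $K$-terms vanish, so $\widehat{\Phi}_{\beta}(t_{11},t_{12},t_{21},t_{22})=\widetilde{\Phi}_{\beta}(u_1,v_1;t_{11},t_{12},t_{21},t_{22})=\min\{\mathcal{I}_{\beta}(\mu):\mu\in W_{u_1,v_1}(t_{11},t_{12},t_{21},t_{22})\}\geq 0$, by Theorem~\ref{thm:4square} and $\mathcal{I}_{\beta}\geq 0$ (the constraint set is nonempty since $u_1,v_1\in(0,1)$). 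When instead $u_1\in\{0,1\}$ or $v_1\in\{0,1\}$, two of the $t_{ij}$ vanish and $\widehat{\Phi}_{\beta}$ collapses termwise to a single mutual-information expression, which is again $\geq 0$; this is the only bookkeeping case. Granting this, for every $(\theta_1,\theta_2)\in(0,1)^2$ we get
\[
0\ \geq\ \widetilde{\Phi}_{\beta}(\theta_1,\theta_2;t_{11},t_{12},t_{21},t_{22})\ \geq\ K(u_1\,\|\,\theta_1)+K(v_1\,\|\,\theta_2)\ \geq\ 0,
\]
so $K(u_1\,\|\,\theta_1)=K(v_1\,\|\,\theta_2)=0$, i.e. $\nu_X([0,\theta_1])=\theta_1$ and $\nu_Y([0,\theta_2])=\theta_2$. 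Since $(\theta_1,\theta_2)\in(0,1)^2$ was arbitrary (and the cases $\theta_i\in\{0,1\}$ are trivial), the cumulative distribution functions of $\nu_X$ and $\nu_Y$ are the identity on $[0,1]$, hence $\nu_X=\nu_Y=\lambda$.

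The only substantial input is Theorem~\ref{thm:4square}, which we may assume; everything else is the elementary observation that the $(\theta_1,\theta_2)$-dependence of $\widetilde{\Phi}_{\beta}$ lives entirely in the two one-dimensional relative entropies of the marginals, so that re-centering at $(\theta_1,\theta_2)=(u_1,v_1)$ reads off nonnegativity of the residual $\widehat{\Phi}_{\beta}$ for free — in particular this route never touches the (not-manifestly-convex, because of the $\beta t_{12}t_{21}$ term) optimization of $\widetilde{\Phi}_{\beta}$ directly. The only mild obstacle is the boundary bookkeeping when a marginal mass equals $0$ or $1$.
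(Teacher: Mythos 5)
Your proof is correct, but it takes a genuinely different route from the paper's. The paper proves Theorem~\ref{thm:Lebesgue} \emph{before} Theorem~\ref{thm:4square}, directly from the normalization identity (\ref{eq:Ihat}),
\[
\mathcal{I}_{\beta}(\nu)\;=\;\mathcal{I}_{\beta}(\widehat{\nu})\;-\;S(\nu_X\,|\,\lambda)\;-\;S(\nu_Y\,|\,\lambda),
\]
which is an immediate consequence of Proposition~\ref{prop:scaling} and Lemma~\ref{lem:normalized}: since $\mathcal{I}_{\beta}\geq 0$ everywhere and $S(\cdot\,|\,\lambda)\leq 0$, the hypothesis $\mathcal{I}_{\beta}(\nu)=0$ forces $S(\nu_X|\lambda)=S(\nu_Y|\lambda)=0$ and hence $\nu_X=\nu_Y=\lambda$ in one stroke. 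You instead invoke Theorem~\ref{thm:4square} and range over scales $(\theta_1,\theta_2)$, isolating the $\theta$-dependence of $\widetilde{\Phi}_{\beta}$ via the chain rule for discrete relative entropy into two coarse-grained Bernoulli terms $K(u_1\|\theta_1)+K(v_1\|\theta_2)$; the re-centering at $(\theta_1,\theta_2)=(u_1,v_1)$ to read off nonnegativity of the residual $\widehat{\Phi}_{\beta}$ is a nice observation, and in effect your decomposition is precisely the coarse-grained shadow of (\ref{eq:Ihat}) at the four-square level. The tradeoff: the paper's argument is shorter and upstream of Theorem~\ref{thm:4square}, while yours requires Theorem~\ref{thm:4square} in full generality (for arbitrary $(t_{ij})\in\Sigma_4$, not only the row/column-matching slice) and only recovers the marginal CDFs pointwise by varying $(\theta_1,\theta_2)$.

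One point you should make explicit, since it is a real logical concern and not merely aesthetic: because the paper establishes Theorem~\ref{thm:Lebesgue} first and Theorem~\ref{thm:4square} afterwards, you are inverting the dependency order, and your argument is only non-circular if Theorem~\ref{thm:4square} is provable without Theorem~\ref{thm:Lebesgue}. It is — the proof of Theorem~\ref{thm:4square} rests on Lemma~\ref{lem:4square}, Corollary~\ref{cor:2squareB}, and the normalization machinery of Section~4 (needed to reduce a general $\nu$ in the constraint set, whose marginals are not Lebesgue, to $\widehat{\nu}$), but not on Theorem~\ref{thm:Lebesgue} itself — so you are safe, but a blind reader of your proof cannot verify this without checking, and you should say so. Your handling of the degenerate case $u_1\in\{0,1\}$ or $v_1\in\{0,1\}$, where Theorem~\ref{thm:4square} does not apply at the re-centered scale, is correct (the residual collapses to $0$ by direct cancellation of the $p$-terms), but spelling out that computation, rather than asserting ``collapses termwise,'' would make the proof self-contained.
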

Because of the lemma, we may restrict to $(t_{11},t_{12},t_{21},t_{22}) \in \Sigma_4$
such that $t_{11}+t_{12} = \theta_1$ and $t_{11}+t_{21}=\theta_2$
because those are the $\lambda$ measures of $[0,\theta_1]$ and $[0,\theta_2]$.
This reduces the parameter from general $(t_{11},t_{12},t_{21},t_{22}) \in \Sigma_4$
to just $t_{11}$ in a certain interval.
We define 
$$
I_{\theta_1,\theta_2}\,
=\,
[\max\{0,\theta_1+\theta_2-1\},\min\{\theta_1,\theta_2\}]\, .
$$
Then we define $\Phi_{\beta}(\theta_1,\theta_2;\cdot) : I_{\theta_1,\theta_2} \to \R$ as 
$$
\Phi_{\beta}(\theta_1,\theta_2;t)\, =\, \widetilde{\Phi}_{\beta}(\theta_1,\theta_2;t,\theta_1-t,\theta_2-t,1-\theta_1-\theta_2+t)\, .
$$
\begin{theorem}
\label{thm:main}
For each $\beta \in \R$,
and each $(\theta_1,\theta_2) \in (0,1)^2$,
the function $\Phi_{\beta}(\theta_1,\theta_2;\cdot) : I_{\theta_1,\theta_2} \to \R$ is strictly convex,
and the unique critical point is given by 
\begin{equation}
\label{eq:Rdefin}
t\, =\, R_{\beta}(\theta_1,\theta_2)\, \stackrel{\mathrm{def}}{:=}\,
-\frac{1}{\beta}\, \ln\left(1 - \frac{(1-e^{-\beta \theta_1})(1-e^{-\beta \theta_2})}{1-e^{-\beta}}\right)\, .
\end{equation}
\end{theorem}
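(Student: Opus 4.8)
The plan is to strip $\widetilde{\Phi}_\beta$ of everything independent of $t$ and reduce the whole statement to the single auxiliary function
$$
\phi(s)\, =\, s\ln s\, +\, s\,p(\beta s)\, ,
$$
from which the convexity and the critical point will be read off via $\phi'$ and $\phi''$. The first move is pure bookkeeping for the substitution $t_{11}=t$, $t_{12}=\theta_1-t$, $t_{21}=\theta_2-t$, $t_{22}=1-\theta_1-\theta_2+t$: every ``row/column sum'' occurring in (\ref{eq:4squareC}) is then constant in $t$ ($t_{11}+t_{12}=\theta_1$, $t_{11}+t_{21}=\theta_2$, $t_{12}+t_{22}=1-\theta_2$, $t_{21}+t_{22}=1-\theta_1$), and since $\ln|\Lambda_{ij}|=\ln|L_i(\theta_1)|+\ln|L_j(\theta_2)|$, summing first over the free index shows $\sum_{i,j}t_{ij}\ln|\Lambda_{ij}|$ is constant too. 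Hence
$$
\Phi_\beta(\theta_1,\theta_2;t)\, =\, c(\theta_1,\theta_2)\, +\, \sum_{i,j=1}^{2}\phi(t_{ij})\, +\, \beta\,t_{12}t_{21}\, ,
$$
which is exactly the ``frustration free'' reduction of the introduction: the boundary data $\theta_1,\theta_2$ enter only through the constant $c(\theta_1,\theta_2)$, and the $t$-dependence sits in a sum of single-square terms plus one quadratic coupling.

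Next, using the identity
$$
s\,p(\beta s)\, =\, \int_0^s \ln\frac{1-e^{-\beta u}}{\beta u}\,du\, ,
$$
which is (\ref{eq:pFormula}) after $u=sx$, one computes $\phi'(s)=1+\ln s+\ln\frac{1-e^{-\beta s}}{\beta s}$ and $\phi''(s)=\beta/(e^{\beta s}-1)$ (interpreted as $1/s$ at $\beta=0$); note also that $\phi$ extends continuously by $\phi(0)=0$, so $\Phi_\beta(\theta_1,\theta_2;\cdot)$ is continuous on all of $I_{\theta_1,\theta_2}$. For strict convexity, since each $t_{ij}$ is affine in $t$ with slope $\pm1$ and $t_{12}t_{21}$ is a monic quadratic, $\frac{d^2}{dt^2}\Phi_\beta(\theta_1,\theta_2;t)=\sum_{i,j}\beta/(e^{\beta t_{ij}}-1)+2\beta$. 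For $\beta\ge0$ every summand is positive, so this is positive. For $\beta<0$, write $\beta=-\gamma$ and use $\beta/(e^{\beta s}-1)=\gamma\bigl(1+1/(e^{\gamma s}-1)\bigr)$; the expression then equals $\gamma\bigl(2+\sum_{i,j}1/(e^{\gamma t_{ij}}-1)\bigr)>0$. So the second derivative is strictly positive on the interior of $I_{\theta_1,\theta_2}$, and continuity up to the endpoints upgrades this to strict convexity on $I_{\theta_1,\theta_2}$.

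For the critical point, using $\phi'$ above together with $\frac{d}{dt}(t_{12}t_{21})=-(t_{12}+t_{21})$, the additive constants and all the $\beta t_{ij}$ factors cancel, leaving
$$
\frac{d}{dt}\,\Phi_\beta(\theta_1,\theta_2;t)\, =\, \ln\frac{(1-e^{-\beta t_{11}})(1-e^{-\beta t_{22}})}{(1-e^{-\beta t_{12}})(1-e^{-\beta t_{21}})}\, -\, \beta(t_{12}+t_{21})\, ,
$$
so stationarity is the equation $(1-e^{-\beta t_{11}})(1-e^{-\beta t_{22}})=(e^{\beta t_{12}}-1)(e^{\beta t_{21}}-1)$. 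Now substitute $e^{-\beta t_{12}}=e^{-\beta\theta_1}/x$, $e^{-\beta t_{21}}=e^{-\beta\theta_2}/x$ and $e^{-\beta t_{22}}=e^{-\beta}x/(e^{-\beta\theta_1}e^{-\beta\theta_2})$, where $x=e^{-\beta t}$, and clear denominators: the resulting quadratic in $x$ has vanishing constant term, so $x=0$ is a spurious root; dividing it out leaves a \emph{linear} equation with unique solution $x=1-(1-e^{-\beta\theta_1})(1-e^{-\beta\theta_2})/(1-e^{-\beta})$, that is, $t=R_\beta(\theta_1,\theta_2)$. This collapse to a linear equation is the ``integrability'' hinted at in the introduction, and it is the step I expect to demand the most care with the algebra — together with keeping the $\beta<0$ sign bookkeeping straight in the convexity bound.

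It remains to check that $R_\beta(\theta_1,\theta_2)$ actually lies in $I_{\theta_1,\theta_2}$, without which the assertion would be vacuous. Put $g(u)=(1-e^{-\beta u})/(1-e^{-\beta})$; a sign check shows $g$ is a strictly increasing bijection of $[0,1]$ onto itself for every real $\beta\neq0$, and the defining relation for $R_\beta$ reads $g(R_\beta(\theta_1,\theta_2))=g(\theta_1)g(\theta_2)$. From $g(\theta_i)\in(0,1)$ one gets $0<g(R_\beta)<\min\{g(\theta_1),g(\theta_2)\}$, hence $0<R_\beta<\min\{\theta_1,\theta_2\}$, and from the elementary identity
$$
g(\theta_1)g(\theta_2)-g(\theta_1+\theta_2-1)\, =\, \frac{(e^{-\beta}-e^{-\beta\theta_1})(e^{-\beta}-e^{-\beta\theta_2})}{e^{-\beta}\,(1-e^{-\beta})^2}\, >\, 0
$$
(the numerator being a product of two like-signed factors in each of the cases $\beta>0$ and $\beta<0$) one gets $R_\beta>\theta_1+\theta_2-1$. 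So $R_\beta(\theta_1,\theta_2)$ lies in the interior of $I_{\theta_1,\theta_2}$, and by strict convexity it is the unique critical point, which completes the proof; the case $\beta=0$ follows by continuity, with $R_0(\theta_1,\theta_2)=\theta_1\theta_2$.
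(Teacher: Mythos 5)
Your argument is correct and follows essentially the same route as the paper: compute $\partial_t\Phi_\beta$ and $\partial_t^2\Phi_\beta$ by differentiating $\theta p(\beta\theta)$ (using \eqref{eq:pFormula}), read off strict convexity from the second derivative, reduce the stationarity condition to $(1-e^{-\beta t_{11}})(1-e^{-\beta t_{22}})=(e^{\beta t_{12}}-1)(e^{\beta t_{21}}-1)$, and factor the resulting quadratic. The minor differences: you substitute $x=e^{-\beta t}$ (spurious root $x=0$) where the paper uses $u=1-e^{-\beta t}$ (spurious root $u=1$) --- these are the same computation up to a linear change of variable; and you establish positivity of $\partial_t^2\Phi_\beta=\sum_{i,j}\beta/(e^{\beta t_{ij}}-1)+2\beta$ by a two-case sign analysis, whereas the paper folds the $2\beta$ into the sum to get the manifestly positive form $\sum_{i,j}\frac{\beta}{2\tanh(\beta t_{ij}/2)}$. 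The one genuinely different --- and arguably cleaner --- step is your verification that $R_\beta(\theta_1,\theta_2)$ lies in the interior of $I_{\theta_1,\theta_2}$: you observe that $g(u)=(1-e^{-\beta u})/(1-e^{-\beta})$ is an increasing bijection of $[0,1]$ and that the defining relation reads $g(R_\beta)=g(\theta_1)g(\theta_2)$, whence the bounds fall out from the elementary identity $g(\theta_1)g(\theta_2)-g(\theta_1+\theta_2-1)>0$. The paper instead computes $\partial_{\theta_1}\partial_{\theta_2}R_\beta$ explicitly, shows it is a positive density, and verifies the marginals $R_\beta(\theta_1,1)=\theta_1$, $R_\beta(1,\theta_2)=\theta_2$; that extra work is not needed for Theorem~\ref{thm:main} per se, but the paper reuses it in Theorem~\ref{thm:summary} to identify $\rho_\beta^*$, so if you used your shortcut you would need to supply the density computation later. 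Both routes are correct; yours is the more economical for the statement at hand.
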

This theorem easily leads to the following, which is the main summary of the results.
\begin{theorem}
\label{thm:summary}
The unique measure $\nu \in \mathcal{M}_{+,1}([0,1]^2)$ such that $\mathcal{I}_{\beta}(\nu)=0$ is $\nu=\nu_{\beta}^*$, where $d\nu_{\beta}^*(x,y) = \rho_{\beta}^*(x,y)\, dx\, dy$ for
$x,y \in [0,1]^2$, where 
$$
\rho_{\beta}(x,y)\, =\, \frac{\partial^2}{\partial x\, \partial y}\, 
R_{\beta}(x,y)\, .
$$
\end{theorem}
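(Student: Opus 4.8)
The plan is to read off Theorem~\ref{thm:summary} from Theorems~\ref{thm:4square}, \ref{thm:Lebesgue} and \ref{thm:main}, together with the basic features of $\mathcal{I}_\beta$ recorded after Proposition~\ref{prop:Ellis}. First I would recall that $\mathcal{I}_\beta \geq 0$ on $\mathcal{M}_{+,1}([0,1]^2)$, that $\inf \mathcal{I}_\beta = 0$, and that $\mathcal{I}_\beta$ is lower semicontinuous on the compact space $\mathcal{M}_{+,1}([0,1]^2)$, so that the infimum is attained; hence a minimizer exists, and it suffices to prove that every measure $\nu$ with $\mathcal{I}_\beta(\nu) = 0$ has joint cumulative distribution function $(\theta_1,\theta_2)\mapsto R_\beta(\theta_1,\theta_2)$, because a Borel probability measure on $[0,1]^2$ is determined by its joint c.d.f.

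Fix such a $\nu$ and a point $(\theta_1,\theta_2) \in (0,1)^2$, and set $t_{ij} = \nu(\Lambda_{ij}(\theta_1,\theta_2))$, so that $\nu \in W_{\theta_1,\theta_2}(t_{11},t_{12},t_{21},t_{22})$ with $(t_{11},t_{12},t_{21},t_{22}) \in \Sigma_4$. By Theorem~\ref{thm:Lebesgue} the marginals of $\nu$ equal $\lambda$, whence $t_{11}+t_{12} = \nu([0,\theta_1]\times[0,1]) = \theta_1$ and $t_{11}+t_{21} = \nu([0,1]\times[0,\theta_2]) = \theta_2$; so, writing $t := t_{11} = \nu([0,\theta_1]\times[0,\theta_2])$, the four-tuple is $(t,\theta_1-t,\theta_2-t,1-\theta_1-\theta_2+t)$, and inclusion--exclusion over the four rectangles shows $t \in I_{\theta_1,\theta_2}$. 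Now I combine $\mathcal{I}_\beta(\nu) = 0$ with Theorem~\ref{thm:4square} (recalling that $\Phi_\beta$ is $\widetilde\Phi_\beta$ at these constrained arguments) and $\mathcal{I}_\beta \geq 0$:
$$
0 = \mathcal{I}_\beta(\nu) \geq \min\bigl\{\mathcal{I}_\beta(\nu') : \nu' \in W_{\theta_1,\theta_2}(t,\theta_1-t,\theta_2-t,1-\theta_1-\theta_2+t)\bigr\} = \Phi_\beta(\theta_1,\theta_2;t) \geq 0\,,
$$
so $\Phi_\beta(\theta_1,\theta_2;t) = 0$. Running the same chain with $t$ replaced by an arbitrary $s \in I_{\theta_1,\theta_2}$ gives $\Phi_\beta(\theta_1,\theta_2;s) \geq 0$; thus $\Phi_\beta(\theta_1,\theta_2;\cdot) \geq 0$ on $I_{\theta_1,\theta_2}$ and attains the value $0$ at $t$, i.e.\ $t$ is a global minimizer. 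By the strict convexity in Theorem~\ref{thm:main}, the minimizer is unique and equals the unique critical point $R_\beta(\theta_1,\theta_2)$. Hence $\nu([0,\theta_1]\times[0,\theta_2]) = R_\beta(\theta_1,\theta_2)$ for all $(\theta_1,\theta_2) \in (0,1)^2$.

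This pins down the joint c.d.f.\ of every minimizer (the values when $\theta_1 \in \{0,1\}$ or $\theta_2 \in \{0,1\}$ being forced, either from the marginals being $\lambda$ or from the continuous extension of $R_\beta$ to $[0,1]^2$), so the minimizer is unique; call it $\nu_\beta^*$. Since $R_\beta$ is an explicit smooth function on $[0,1]^2$, $\nu_\beta^*$ is absolutely continuous with respect to $\lambda^{\otimes 2}$ with density $\dfrac{\partial^2}{\partial x\,\partial y} R_\beta(x,y)$, which is the claimed formula; nonnegativity of the density and the fact that it integrates to $1$ are automatic since $\nu_\beta^*$ is a probability measure.

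I expect the only delicate points to be bookkeeping rather than substance: checking $t \in I_{\theta_1,\theta_2}$ by inclusion--exclusion, verifying that $R_\beta(\theta_1,\theta_2)$ lies in the \emph{interior} of $I_{\theta_1,\theta_2}$ so that the ``unique critical point'' of Theorem~\ref{thm:main} really is the minimizer of the strictly convex function $\Phi_\beta(\theta_1,\theta_2;\cdot)$ rather than being lost at the boundary, and confirming that the continuous extension of $R_\beta$ to the closed square is a bona fide two-dimensional distribution function. None of these should be a genuine obstacle given the earlier theorems; the mathematical content sits in Theorems~\ref{thm:4square}, \ref{thm:Lebesgue} and \ref{thm:main}, and the present argument is merely the assembly step.
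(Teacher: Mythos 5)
Your proof is correct and follows the same approach as the paper: apply Theorem~\ref{thm:Lebesgue} to pin the marginals, use Theorem~\ref{thm:4square} together with Theorem~\ref{thm:main} to identify the joint c.d.f.\ with $R_\beta$, and differentiate. You make explicit a step the paper leaves terse—observing that Theorem~\ref{thm:4square} plus $\mathcal{I}_\beta\geq 0$ gives $\Phi_\beta(\theta_1,\theta_2;\cdot)\geq 0$ on all of $I_{\theta_1,\theta_2}$, so $\Phi_\beta(\theta_1,\theta_2;t)=0$ forces $t$ to be the unique global minimizer and hence the critical point—which is a helpful clarification, not a different route; the paper instead justifies absolute continuity of $\nu_\beta^*$ via finiteness of the relative entropy, but this is a cosmetic difference from your appeal to smoothness of $R_\beta$.
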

\begin{proof}
Any measure $\nu \in \mathcal{M}_{+,1}([0,1]^2)$ such that $\mathcal{I}_{\beta}(\nu)=0$ must satisfy $\nu_X=\nu_Y=\lambda$ by Theorem \ref{thm:Lebesgue}.
Then, for any $(\theta_1,\theta_2) \in \mathcal{M}_{+,1}([0,1]^2)$ the measure $\nu$ is in $W_{\theta_1,\theta_2}(t,\theta_1-t,\theta_2-t,1-\theta_1-\theta_2+t)$
for $t = \nu(\Lambda_{11}(\theta_1,\theta_2))$. So by Theorem \ref{thm:4square}, 
$$
\Phi_{\beta}(\theta_1,\theta_2;t)=\widetilde{\Phi}_{\beta}(\theta_1,\theta_2;t,\theta_1-t,\theta_2-t,1-\theta_1-\theta_2+t)\leq \mathcal{I}_{\beta}(\nu)=0\, .
$$
So that means $\widetilde{\Phi}_{\beta}(\theta_1,\theta_2;t,\theta_1-t,\theta_2-t,1-\theta_1-\theta_2+t)=\mathcal{I}_{\beta}(\nu)=0$ because $0$ is the minimum of
$\mathcal{I}_{\beta}$ (so a lower bound for every $\widetilde{\Phi}_{\beta}(\theta_1,\theta_2;t_1,t_2,t_3,t_4)$).
So $t=\nu(\Lambda_{11}(\theta_1,\theta_2))$ is the unique critical point of $\Phi_{\beta}(\theta_1,\theta_2;t)$.

But then, by Theorem \ref{thm:main}, we see that this means that $t=R_{\beta}(\theta_1,\theta_2)$. In other words, $\nu([0,\theta_1]\times [0,\theta_2]) = R_{\beta}(\theta_1,\theta_2)$.
These rectangular measures completely characterize $\nu$: in fact they give the standard formula for the multidimensional distribution function. So uniqueness is proved.
We now call it $\nu_{\beta}^*$.
Note that $\nu_{\beta}^* \ll \lambda^{\otimes 2}$ because the relative entropy is not $-\infty$.
Therefore, there is a density, which may be calculated by differentiating the distribution function.
\end{proof}
Let us comment on the relation to the absence of phase transitions for this model.

For a mean-field model from statistical mechanics, one needs to replace the usual
Dobrushin-Lanford-Ruelle definition of equilibrium states by an appropriate analogue.
When a weak large deviation principle exists, as it does here, the correct analogue,
in the sense of the Boltzmann-Gibbs variational principle, may be viewed as $\mathcal{I}_{\beta}(\nu)=0$.
Therefore, this result is a version of absence of phase transition in this model,
in the sense of uniqueness of equilibrium states.

\section{Standardizing the measure}

This is a solvable model.
This is manifest in the symmetry of $\mathcal{I}_{\cdot}(\cdot)$.
We describe this, now.

\begin{definition}
\label{def:frakN}
Suppose that $\nu^{(0)} \in \mathcal{M}_{+,1}([0,1]^2)$ satisfies $\nu^{(0)} \ll \lambda^{\otimes 2}$.
Suppose that $G_X,G_Y : [0,1] \to [0,1]$ are two absolutely continuous probability distribution functions.
Then we may define a new measure $\nu = \mathfrak{N}(\nu^{(0)},G_X,G_Y)$ as 
\begin{equation}
\label{eq:scaling}
\nu([0,x]\times [0,y])\, =\, \nu^{(0)}([0,G_X(x)]\times [0,G_Y(y)])\, .
\end{equation}
\end{definition}

\begin{prop}
\label{prop:scaling}
Suppose that $\nu^{(0)} \in \mathcal{M}_{+,1}([0,1]^2)$ satisfies $\nu^{(0)} \ll \lambda^{\otimes 2}$ and $\nu^{(0)}_X=\nu^{(0)}_Y=\lambda$.
Suppose that $G_X,G_Y : [0,1] \to [0,1]$ are two absolutely continuous probability distribution functions.
Defining $\nu = \mathfrak{N}(\nu^{(0)},G_X,G_Y)$, we have that $\nu_X([0,a]) = G_X(a)$, $\nu_Y([0,a]) = G_Y(a)$ for all $a \in [0,1]$.
Moreover,  
\begin{equation}
\label{eq:entropyFact0}
S(\nu\, |\, \lambda^{\otimes 2})\,
=\, S(\nu^{(0)}\, |\, \lambda^{\otimes 2}) +
S(\nu_X\, |\, \lambda^{\otimes 1}) + S(\nu_Y\, |\, \lambda^{\otimes 1})\, ,
\end{equation}
and 
\begin{equation}
\label{eq:energyFact0}
\mathcal{E}(\nu)\, =\, \mathcal{E}(\nu^{(0)})\, .
\end{equation}
\end{prop}
\begin{proof}
The fact about the marginals follows directly from the definition (\ref{eq:scaling}) and the definition of the marginals (\ref{eq:marginals}) and $\nu^{(0)}_X = \nu^{(0)}_Y = \lambda$.

Let us write $\rho^{(0)} : [0,1]^2 \to [0,\infty)$ for the density associated to $\nu^{(0)}$. Note that $\int_{0}^{1} \rho^{(0)}(x,y)\, dy= 1$ for $x$, $\lambda$-a.e., 
and similarly $\int_{0}^{1} \rho^{(0)}(x,y)\, dx = 1$ for $y$, $\lambda$-a.e, because $\nu^{(0)}_X = \nu^{(0)}_Y=\lambda$.
Let us write $g_X:[0,1] \to [0,\infty)$ and $g_Y:[0,1] \to [0,\infty)$ for the density functions associated to $G_X$ and $G_Y$.
Then, from (\ref{eq:scaling}) and the chain rule we have
\begin{equation}
\label{eq:densityScaling}
\frac{d\nu}{d\lambda^{\otimes 2}}(x,y)\, =\, \rho^{(0)}(G_X(x),G_Y(y)) g_X(x) g_Y(y)\, .
\end{equation}
Therefore,
\begin{align*}
S(\nu\, |\, \lambda^{\otimes 2})\,
&=\, - \int_{0}^{1} \int_{0}^{1} \ln\left(\rho^{(0)}(G_X(x),G_Y(y))\right) \rho^{(0)}(G_X(x),G_Y(y)) g_X(x) g_Y(y)\, dx\, dy\\
&\qquad -\int_{[0,1]^2} \ln\left(g_X(x)\right)\, d\nu(x,y)
 -\int_{[0,1]^2} \ln\left(g_Y(x)\right)\, d\nu(x,y)\, .
\end{align*}
In the second integral we integrate over $y$ first and use the fact that the marginal $\nu_X$ has density function $g_X$ and in the third integral we integrate over $x$ first and use the 
fact that $\nu_Y$ has density function $g_Y$.
In the first integral we make the change of variables $x = G_X^{I}(u)$ and $y=G_Y^I(v)$, and then we obtain 
\begin{align*}
S(\nu\, |\, \lambda^{\otimes 2})\,
&=\, - \int_{0}^{1} \int_{0}^{1} \ln\left(\rho^{(0)}(x,y)\right) \rho^{(0)}(x,y)\, dx\, dy\\
&\qquad -\int_{0}^{1} \ln\left(f_X(x)\right) f_X(x)\, dx
-\int_{0}^{1} \ln\left(f_Y(y)\right) f_Y(y)\,  dy\, .
\end{align*}
The first integral follows from the chain rule for Stieltjes integrals.
See, for example, \cite{RieszNagy}, Chapter III, especially Section 58.

The result for the energy follows from a similar calculation, using the same method.
\end{proof}

Now, given any $\nu \in \mathcal{M}_{+,1}([0,1]^2)$,
if $\nu \ll \lambda^{\otimes 2}$, then $\nu_X$ and $\nu_Y$, defined in (\ref{eq:marginals})
are both absolutely continuous with respect to $\lambda$.
We define $F_{\nu,X},F_{\nu,Y} : [0,1] \to [0,1]$ as 
$$
F_{\nu,X}(a)\, =\, \nu_X([0,a])\
\text{ and }\
F_{\nu,Y}(a)\, =\, \nu_Y([0,a])\, ,
$$
for each $a \in [0,1]$. These functions are both continuous.
\begin{definition}
Given a distribution function $G : [0,1] \to [0,1]$, let us denote the generalized inverse as $G^I : [0,1] \to [0,1]$,
where the condition to be a generalized inverse is
$$
\forall x \in [0,1]\, ,\
G^I(x)\, =\, \inf(\mathcal{U}_G(x))\, ,\ \text{ where }\ \mathcal{U}_{G}(x)\, =\, \{a \in [0,1]\, :\, G(a)\geq x\}\, .
$$
\end{definition}
Generally speaking, by right-continuity of $G$, we have $G(G^I(x)) = \inf\{G(a)\, \, :\, a \in \mathcal{U}_G(x)\}\, \geq\, x$.
Also, if $y<G^I(x)$ then $y \not\in \mathcal{U}_G(x)$ so $G(y)<x$.
Hence,
\begin{equation}
\label{eq:GIconditions}
G(G^I(x)) \geq x\, ,\quad \text{and}\quad
\Big(y<G^I(x)\ \Rightarrow\ G(y) < x\Big)\, .
\end{equation}
That is true for any distribution function. More is true if $G$ is continuous.
\begin{lemma}
\label{lem:GI}
Suppose that $G : [0,1] \to [0,1]$ is a continuous distribution function.
Then $G^I$ is a right inverse for $G$: for all $x \in [0,1]$, $G(G^I(x))=x$.
\end{lemma}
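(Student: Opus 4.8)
The plan is to bootstrap the two one-sided facts already recorded in (\ref{eq:GIconditions}) into the claimed equality. Since the first clause of (\ref{eq:GIconditions}) gives $G(G^I(x)) \geq x$ for free for every $x \in [0,1]$, the entire content of the lemma is the reverse inequality $G(G^I(x)) \leq x$, and this is the one place where continuity of $G$ — rather than mere right-continuity — enters.

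Write $a^{*} = G^I(x) = \inf \mathcal{U}_G(x)$; this infimum is taken over a nonempty set, since $1 \in \mathcal{U}_G(x)$ because $G(1) = 1 \geq x$. I would argue by contradiction, assuming $G(a^{*}) > x$. If $a^{*} = 0$ this says $G(0) > x$, which is impossible because $G(0) = 0 \leq x$. If $a^{*} > 0$, then left-continuity of $G$ at $a^{*}$ gives $\lim_{y \uparrow a^{*}} G(y) = G(a^{*}) > x$, so there is some $y \in [0,a^{*})$ with $G(y) > x$, i.e.\ $y \in \mathcal{U}_G(x)$; but $y < a^{*} = \inf \mathcal{U}_G(x)$ is absurd. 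Hence $G(a^{*}) \leq x$, and combining with (\ref{eq:GIconditions}) yields $G(G^I(x)) = x$. (An equivalent route avoiding contradiction: when $a^{*} > 0$ choose $y_m \uparrow a^{*}$ with $y_m < a^{*}$, apply the second clause of (\ref{eq:GIconditions}) to get $G(y_m) < x$, and let $m \to \infty$ using continuity; the case $a^{*} = 0$ is the same trivial remark.)

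There is no serious obstacle here — the lemma is elementary — so there are only two points I would be careful about. First, the boundary case $G^I(x) = 0$, which uses $G(0) = 0$; this is why $G(0)=0$ should be understood as part of the notion of a distribution function on $[0,1]$ (and it certainly holds for the $F_{\nu,X}, F_{\nu,Y}$ that this lemma will be applied to, since the relevant measures are absolutely continuous). Second, it is genuinely left-continuity of $G$ that does the work: a right-continuous distribution function with a jump, equivalently an underlying measure with an atom, fails to satisfy $G(G^I(x)) = x$ at the corresponding values of $x$, so continuity is exactly the hypothesis ruling this out.
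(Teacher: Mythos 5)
Your proof is correct and takes essentially the same route as the paper's: argue by contradiction, and use continuity of $G$ from the left at $G^I(x)$ to produce a $y < G^I(x)$ with $G(y) > x$, contradicting the second clause of (\ref{eq:GIconditions}). The one thing you do beyond the paper is to address the boundary case $G^I(x) = 0$ (which the paper's one-line argument silently assumes away); that does require reading $G(0)=0$ into the notion of a distribution function on $[0,1]$, and as you note this is harmless in context since the lemma is only ever invoked for $F_{\nu,X},F_{\nu,Y}$ with $\nu \ll \lambda^{\otimes 2}$.
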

\begin{proof}
Suppose that we had $G(G^I(x))>x$. Since $G$ is continuous, there would exist some $y<G^I(x)$ such that $G(y)>x$, as well. But this contradicts (\ref{eq:GIconditions}).
\end{proof}
\begin{definition}
If $\nu \in \mathcal{M}_{+,1}([0,1]^2)$ satisfies $\nu \ll \lambda^{\otimes 2}$, then define
$\widehat{\nu} \in \mathcal{M}_{+,1}([0,1]^2)$ to be the measure such that
\begin{equation}
\label{eq:widehatnuDef}
\widehat{\nu}([0,x] \times [0,y])\,
=\, \nu(\left[0,F_{\nu,X}^I(x)\right] \times \left[0,F_{\nu,Y}^I(y)\right])\, ,
\end{equation}
with the notation as above.
\end{definition}
The important relation between $\nu$ and $\widehat{\nu}$ is  the following.
\begin{lemma}
\label{lem:normalized}
If $\nu \in \mathcal{M}_{+,1}([0,1]^2)$ satisfies $\nu \ll \lambda^{\otimes 2}$, then $(\widehat{\nu})_X=(\widehat{\nu})_Y=\lambda$
and 
\begin{equation}
\label{eq:normalized}
\nu([0,x]\times [0,y])\, =\, \widehat{\nu}([0,F_{\nu,X}(x)]\times [0,F_{\nu,Y}(y)])\, ,
\end{equation}
for all $x,y \in [0,1]$.
In other words, $\nu = \mathfrak{N}(\widehat{\nu},F_{\nu,X},F_{\nu,Y})$.
\end{lemma}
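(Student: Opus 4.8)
The plan is to build everything on two elementary facts about a continuous distribution function $G$ and its generalized inverse $G^I$: first, Lemma~\ref{lem:GI} gives the one-sided identity $G(G^I(x))=x$ for every $x$; second, the reverse composition satisfies only $G^I(G(a))\le a$, but applying $G$ and invoking Lemma~\ref{lem:GI} once more gives $G(G^I(G(a)))=G(a)$, so $a$ and $G^I(G(a))$ differ only by an interval on which $G$ is constant. Since $\nu\ll\lambda^{\otimes 2}$ forces $\nu_X,\nu_Y\ll\lambda$, the functions $F_{\nu,X},F_{\nu,Y}$ are continuous (indeed absolutely continuous) and atomless, so such ``flat intervals'' carry no $\nu_X$- resp.\ $\nu_Y$-mass; converting ``$F_{\nu,X}$ is flat there'' into ``$\nu$ has no mass there'' is the only real subtlety in the argument.

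First I would compute the marginals of $\widehat{\nu}$. For $x\in[0,1]$, take second argument $1$ in (\ref{eq:widehatnuDef}):
$$
(\widehat{\nu})_X([0,x])\,=\,\widehat{\nu}([0,x]\times[0,1])\,=\,\nu\big([0,F_{\nu,X}^I(x)]\times[0,F_{\nu,Y}^I(1)]\big)\,.
$$
By Lemma~\ref{lem:GI}, $F_{\nu,Y}(F_{\nu,Y}^I(1))=1$, so $[0,F_{\nu,Y}^I(1)]$ has full $\nu_Y$-measure; since $\nu\big(A\times([0,1]\setminus B)\big)\le\nu_Y([0,1]\setminus B)$ vanishes whenever $\nu_Y(B)=1$, the displayed quantity equals $\nu_X([0,F_{\nu,X}^I(x)])=F_{\nu,X}(F_{\nu,X}^I(x))=x$. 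Hence $(\widehat{\nu})_X=\lambda$, and the symmetric argument gives $(\widehat{\nu})_Y=\lambda$. (Along the way one checks that the right-hand side of (\ref{eq:widehatnuDef}) genuinely is the joint distribution function of a Borel probability measure: monotonicity in each variable, the grounding at $x=0$ or $y=0$, and nonnegativity of rectangle increments are immediate from monotonicity of $F_{\nu,X}^I,F_{\nu,Y}^I$ together with the value $1$ just computed, while right-continuity holds because any jump of $F_{\nu,X}^I$ or $F_{\nu,Y}^I$ occurs precisely across a $\nu$-null strip; moreover $\widehat{\nu}\ll\lambda^{\otimes 2}$ by a change of variables analogous to the one in the proof of Proposition~\ref{prop:scaling}, so that $\mathfrak{N}(\widehat{\nu},F_{\nu,X},F_{\nu,Y})$ makes sense.)

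Next I would prove the scaling identity (\ref{eq:normalized}). Evaluating (\ref{eq:widehatnuDef}) at the point $(F_{\nu,X}(x),F_{\nu,Y}(y))$ gives $\widehat{\nu}([0,F_{\nu,X}(x)]\times[0,F_{\nu,Y}(y)])=\nu([0,a]\times[0,b])$, where $a=F_{\nu,X}^I(F_{\nu,X}(x))$ and $b=F_{\nu,Y}^I(F_{\nu,Y}(y))$. Since $x\in\mathcal{U}_{F_{\nu,X}}(F_{\nu,X}(x))$ we have $a\le x$, and Lemma~\ref{lem:GI} gives $F_{\nu,X}(a)=F_{\nu,X}(x)$; likewise $b\le y$ and $F_{\nu,Y}(b)=F_{\nu,Y}(y)$. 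Consequently
$$
0\,\le\,\nu([0,x]\times[0,y])-\nu([0,a]\times[0,b])\,\le\,\nu\big((a,x]\times[0,1]\big)+\nu\big([0,1]\times(b,y]\big)\,=\,\nu_X((a,x])+\nu_Y((b,y])\,,
$$
and $\nu_X((a,x])=F_{\nu,X}(x)-F_{\nu,X}(a)=0$, $\nu_Y((b,y])=0$ in the same way. Hence $\nu([0,x]\times[0,y])=\nu([0,a]\times[0,b])=\widehat{\nu}([0,F_{\nu,X}(x)]\times[0,F_{\nu,Y}(y)])$, which is (\ref{eq:normalized}). Finally, (\ref{eq:normalized}) is exactly (\ref{eq:scaling}) with $\nu^{(0)}=\widehat{\nu}$, $G_X=F_{\nu,X}$, $G_Y=F_{\nu,Y}$; since a Borel probability measure on $[0,1]^2$ is determined by its values on the rectangles $[0,x]\times[0,y]$, this says $\nu=\mathfrak{N}(\widehat{\nu},F_{\nu,X},F_{\nu,Y})$.

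The only genuine obstacle is the one flagged above: $F_{\nu,X}^I$ is not a two-sided inverse, so the equality $\nu([0,x]\times[0,y])=\nu([0,a]\times[0,b])$ is false at the level of sets and holds only because the discrepancy sits on a $\nu$-null set — which is exactly where $\nu\ll\lambda^{\otimes 2}$ (hence $\nu_X,\nu_Y$ atomless) enters. Everything else is routine bookkeeping with monotone functions.
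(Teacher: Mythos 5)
Your proof is correct and follows essentially the same route as the paper: compute $(\widehat{\nu})_X$, $(\widehat{\nu})_Y$ using $F(F^I(1))=1$ to kill the stray strip, then establish $F^I(F(x))\le x$ and bound the discrepancy $\nu([0,x]\times[0,y])-\nu([0,a]\times[0,b])$ by the marginal masses $\nu_X((a,x])+\nu_Y((b,y])$, each of which vanishes. The one small difference is your justification of $F^I(F(x))\le x$ — you observe directly that $x\in\mathcal{U}_{F}(F(x))$ so the infimum is $\le x$, which is a bit cleaner than the paper's argument via (\ref{eq:GIconditions}) and monotonicity, though both are immediate.
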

\begin{proof}
To simplify notation, let us just write $F_X,F_Y,F_X^I,F_Y^I$ in place of $F_{\nu,X},F_{\nu,Y},F_{\nu,X}^I,F_{\nu,Y}^I$.

By definition 
$$
(\widehat{\nu})_X([0,x])\, =\, \widehat{\nu}([0,x] \times [0,1])\, =\, \nu([0,F^I_X(x)] \times [0,F^I_Y(1)])\, .
$$
Now we note that this means
$$
(\widehat{\nu})_X([0,x])\, \leq\, \nu([0,F_X^I(x)]\times [0,1])\, =\, \nu_X([0,F_X^I(x)])\, =\, F_X(F_X^I(x))\, =\, x\, ,
$$
using Lemma \ref{lem:GI}. But in fact,
$$
x - (\widehat{\nu})_X([0,x])\, =\, \nu([0,F_X^I(x)]\times (F_Y^I(1),1])\, \leq\, \nu([0,1]\times  (F_Y^I(1),1])\,
=\, \nu_Y((F_Y^I(1),1])\, ,
$$
and this equals $F_Y(1) - F_Y(F_Y^I(1))$, i.e., $1-F_Y(F_Y^I(1))$, which is $0$, again by Lemma \ref{lem:GI}.
So this proves that $(\widehat{\nu})_X = \lambda$, and the corresponding fact for the $y$ marginal follows, similarly.

Now, applying (\ref{eq:widehatnuDef}) with $x$ replaced by $F_X(x)$ and $y$
replaced by $F_Y(y)$,
\begin{equation}
\label{eq:widehatFirst}
\widehat{\nu}([0,F_X(x)] \times [0,F_Y(y)])\,
=\, \nu(\left[0,F_X^I(F_X(x))\right] \times \left[0,F_Y^I(F_Y(y))\right])\, .
\end{equation}
We note that (\ref{eq:GIconditions}) implies that if $a<F_X^I(F_X(x))$ then $F_X(a)<F_X(x)$.
Since $F_X$ is non-decreasing this implies that for every $a \in [0,1]$, if we have $a<F_X^I(F_X(x))$
then $a<x$. In other words, this all implies that $F_X^I(F_X(x)) \leq x$.
Using this, we may see that (\ref{eq:widehatFirst}) implies that
$$
\widehat{\nu}([0,F_X(x)] \times [0,F_Y(y)])\, \leq\, \nu([0,x]\times [0,y])\, .
$$
But then by the same type of argument as before, using (\ref{eq:widehatFirst}), we actually have
\begin{multline*}
\nu([0,x]\times [0,y]) - \widehat{\nu}([0,F_X(x)] \times [0,F_Y(y)])\\ 
\leq\, \nu_X([0,x]) - \nu_X([0,F_X^I(F_X(x))]) + 
\nu_Y([0,y]) - \nu_Y([0,F_Y^I(F_Y(y))]) \, .
\end{multline*}
And this equals $0$.
\end{proof}
Finally, we state the scaling properties of the entropy and energy, under the transformation of $\nu$ to $\widehat{\nu}$.
\begin{cor}
\label{cor:normalized}
For any $\nu \in \mathcal{M}_{+,1}([0,1]^2)$ with $\nu \ll \lambda^{\otimes 2}$,
\begin{equation}
\label{eq:entropyFact}
S(\nu\, |\, \lambda^{\otimes 2})\,
=\, S(\widehat{\nu}\, |\, \lambda^{\otimes 2}) +
S(\nu_X\, |\, \lambda^{\otimes 1}) + S(\nu_Y\, |\, \lambda^{\otimes 1})\, ,
\end{equation}
and 
\begin{equation}
\label{eq:energyFact}
\mathcal{E}(\nu)\, =\, \mathcal{E}(\widehat{\nu})\, .
\end{equation}
\end{cor}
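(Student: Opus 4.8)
The plan is to obtain the corollary by gluing together the two preceding results. Lemma \ref{lem:normalized} identifies $\nu$ as $\mathfrak{N}(\widehat{\nu},F_{\nu,X},F_{\nu,Y})$ and simultaneously gives $(\widehat{\nu})_X=(\widehat{\nu})_Y=\lambda$, while Proposition \ref{prop:scaling} is exactly the statement that entropy and energy transform in the asserted way under $\mathfrak{N}(\cdot,G_X,G_Y)$ when the inner measure has uniform marginals. So I would set $\nu^{(0)}=\widehat{\nu}$, $G_X=F_{\nu,X}$, $G_Y=F_{\nu,Y}$; since the measure with distribution function $F_{\nu,X}$ is precisely $\nu_X$ and likewise $F_{\nu,Y}$ gives $\nu_Y$, the marginal entropies appearing in Proposition \ref{prop:scaling} are $S(\nu_X\, |\, \lambda)$ and $S(\nu_Y\, |\, \lambda)$, and its energy conclusion reads $\mathcal{E}(\nu)=\mathcal{E}(\widehat{\nu})$. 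Thus the two displayed identities of the corollary are verbatim the conclusions of Proposition \ref{prop:scaling}, once the hypotheses of that proposition are checked.

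The only hypothesis not handed to us for free is $\widehat{\nu}\ll\lambda^{\otimes 2}$: absolute continuity of $F_{\nu,X}$ and $F_{\nu,Y}$ is immediate from $\nu\ll\lambda^{\otimes 2}$, and the uniform-marginal condition on $\widehat{\nu}$ is the first conclusion of Lemma \ref{lem:normalized}. To establish it, write $d\nu=\rho\,d\lambda^{\otimes 2}$ and $f_X=F_{\nu,X}'$, $f_Y=F_{\nu,Y}'$. Reading the density-scaling relation (\ref{eq:densityScaling}) in reverse, the density of $\widehat{\nu}$ at $(F_{\nu,X}(x),F_{\nu,Y}(y))$ is forced to be $\rho(x,y)/(f_X(x)f_Y(y))$ on the set where $f_X(x),f_Y(y)>0$; the exceptional set $\{f_X=0\}$ has $\lambda$-negligible image under $F_{\nu,X}$, since for absolutely continuous $F_{\nu,X}$ one has $\lambda^*(F_{\nu,X}(E))\le\int_E f_X\,d\lambda$, and symmetrically in $y$, so those points carry no $\widehat{\nu}$-mass and a density exists $\lambda^{\otimes 2}$-a.e. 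Equivalently one may check directly that $\widehat{\nu}$ is the push-forward of $\nu$ under the product map $(x,y)\mapsto(F_{\nu,X}(x),F_{\nu,Y}(y))$: the two distribution functions differ only by the $\nu$-mass of strips on which one coordinate map is locally constant, and that discrepancy has vanishing $\nu$-marginal by the same computation that proved $(\widehat{\nu})_X=\lambda$ in Lemma \ref{lem:normalized}, hence is zero because $\nu\ll\lambda^{\otimes 2}$; and a push-forward of an absolutely continuous measure under such a map is absolutely continuous.

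With $\widehat{\nu}\ll\lambda^{\otimes 2}$ in hand, Proposition \ref{prop:scaling} applies directly and the corollary follows. The main, and essentially only, obstacle is this absolute-continuity check: it is routine but slightly delicate exactly when the marginal densities of $\nu$ vanish on sets of positive $\lambda$-measure, which is the regime in which the generalized inverses $F_{\nu,X}^I$, $F_{\nu,Y}^I$ develop jumps; everything else is a straight citation of Lemma \ref{lem:normalized} and Proposition \ref{prop:scaling}.
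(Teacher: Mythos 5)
Your proposal is correct and follows the same route as the paper: the paper's proof is literally ``Combine Proposition \ref{prop:scaling} and Lemma \ref{lem:normalized},'' exactly the gluing you describe. The additional verification that $\widehat{\nu}\ll\lambda^{\otimes 2}$ (needed to invoke Proposition \ref{prop:scaling}) is a genuine hypothesis the paper leaves implicit; your sketch of it is sound, though the closing phrase ``a push-forward of an absolutely continuous measure under such a map is absolutely continuous'' is not true for arbitrary maps and should be tied explicitly to the facts that $\rho$ vanishes a.e.\ on $\{f_X=0\}\times[0,1]\cup[0,1]\times\{f_Y=0\}$ and that $F_X,F_Y$ push $f_X\,d\lambda$, $f_Y\,d\lambda$ forward to $\lambda$.
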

\begin{proof}
Combine Proposition \ref{prop:scaling} and Lemma \ref{lem:normalized}.
\end{proof}
We may now prove Theorem \ref{thm:Lebesgue}. (We will prove Theorem \ref{thm:4square} in a later section.)

\begin{proofof}
{\bf Proof of Theorem \ref{thm:Lebesgue}:}
From (\ref{eq:entropyFact}) and (\ref{eq:energyFact}), it follows that
\begin{equation}
\label{eq:Ihat}
\mathcal{I}_{\beta}(\nu)\, =\, \mathcal{I}_{\beta}(\widehat{\nu}) - S(\nu_X|\lambda)
-S(\nu_Y|\lambda)\, .
\end{equation}
Hence 
$$
\mathcal{I}_{\beta}(\widehat{\nu})\, =\,
\mathcal{I}_{\beta}(\nu) 
+ S(\nu_X|\lambda)
+ S(\nu_Y|\lambda)\, .
$$
But the relative entropy is nonpositive and equals $0$ only at the unique maximizer $\lambda$.
Therefore, since $\mathcal{I}_{\beta}$ can never be negative, we see that
$\mathcal{I}_{\beta}(\nu)$ can equal $0$ only if $\nu_X=\nu_Y=\lambda$.
\end{proofof}
We can think of this proof as a partial solution of the one-square problem, for $[0,1]^2$.
More specifically, while it does not give the unique measure $\nu \in \mathcal{M}([0,1]^2)$
solving $\mathcal{I}_{\beta}(\nu)=0$, it does give the 
$x$ and $y$ marginals.
The next step is to consider the two-square problem.
For the two-square problem, we will not determine the exact marginals.
But we will prove a scaling which will ultimately let us solve the four-square problem.

\section{The two-square problem}
\label{sec:2square}

Given any $\theta \in (0,1)$ we define $\Lambda_1(\theta) = [0,1] \times [0,\theta]$
and $\Lambda_2(\theta) = [0,1] \times (\theta,1]$.
Since $\nu_Y=\lambda$,
we have $\nu(\Lambda_1)=\theta$
and $\nu(\Lambda_2) = 1-\theta$.
We define two new measures $\nu^{(1)},\nu^{(2)} \in \mathcal{M}_{+,1}([0,1]^2)$ by
$$
\nu^{(1)}(\cdot)\, =\, \theta^{-1} \nu(\cdot \cap \Lambda_1)\ \text{ and }\
\nu^{(2)}(\cdot)\, =\, (1-\theta)^{-1} \nu(\cdot \cap \Lambda_2)\, .
$$
Since $\nu_Y = \lambda$, we have for the $y$-marginals of $\nu^{(1)}$ and $\nu^{(2)}$:
\begin{equation}
\label{eq:yMar}
\nu^{(1)}_Y(\cdot )\, =\, \theta^{-1} \lambda(\cdot \cap [0,\theta])\
\text{ and }\
d\nu^{(2)}_Y(y)\, =\, (1-\theta)^{-1} \lambda(\cdot \cap (\theta,1])\, .
\end{equation}
The $x$ marginals $\nu^{(1)}_{X}$ and $\nu^{(2)}_X$
may be more complicated. But we can prove the following result.
\begin{lemma}
\label{lem:2square}
Assuming that $\nu \in \mathcal{M}_{+,1}([0,1]^2)$ has $\nu \ll \lambda^{\otimes 2}$
and $\nu_Y = \lambda$,
using the notation above,
\begin{align*}
\mathcal{I}_{\beta}(\nu)\,
&=\, p(\beta) - \theta p(\beta \theta) - (1-\theta) p(\beta(1-\theta)) 
 +\theta \mathcal{I}_{\beta \theta}(\widehat{\nu}^{(1)})
+(1-\theta)\mathcal{I}_{\beta (1-\theta)}(\widehat{\nu}^{(2)}) \\
&\qquad - \theta S(\nu_X^{(1)}\, |\, \lambda)
-(1-\theta) S(\nu_X^{(2)}\, |\, \lambda)
+\beta \theta(1-\theta) \int_{0}^{1} \int_{0}^{1}
\mathbf{1}_{(-\infty,0)}(x_2-x_1)\, d\nu_X^{(1)}(x_1)\, d\nu_X^{(2)}(x_2)\, .
\end{align*}
\end{lemma}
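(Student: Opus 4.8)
The plan is to write $\mathcal{I}_{\beta}(\nu) = p(\beta) - S(\nu\,|\,\lambda^{\otimes 2}) + \beta\,\mathcal{E}(\nu)$ from the definition (\ref{eq:I}) and to decompose the entropy and the energy separately, in each case partitioning $[0,1]^2$ (resp.\ $([0,1]^2)^2$) according to whether the $y$-coordinate(s) fall in $[0,\theta]$ or $(\theta,1]$, and then appealing to Corollary~\ref{cor:normalized} applied to $\nu^{(1)}$ and to $\nu^{(2)}$.

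For the entropy: if $\rho$ denotes the density of $\nu$, then on $\Lambda_1(\theta)$ one has $\rho = \theta\rho^{(1)}$, where $\rho^{(1)}$ is the density of $\nu^{(1)}$, and $d\nu = \theta\,d\nu^{(1)}$ there (and similarly with $1-\theta$ and $\nu^{(2)}$ on $\Lambda_2(\theta)$). Splitting $S(\nu\,|\,\lambda^{\otimes 2})$ as the sum of the integrals over $\Lambda_1$ and $\Lambda_2$ and using $\ln(\theta\rho^{(1)}) = \ln\theta + \ln\rho^{(1)}$ gives
\[
S(\nu\,|\,\lambda^{\otimes 2}) = -\theta\ln\theta - (1-\theta)\ln(1-\theta) + \theta\,S(\nu^{(1)}\,|\,\lambda^{\otimes 2}) + (1-\theta)\,S(\nu^{(2)}\,|\,\lambda^{\otimes 2}).
\]
Applying Corollary~\ref{cor:normalized} to $\nu^{(1)}$ and $\nu^{(2)}$, and noting from (\ref{eq:yMar}) that $\nu^{(1)}_Y$ and $\nu^{(2)}_Y$ are uniform on intervals of length $\theta$ and $1-\theta$, so that $S(\nu^{(1)}_Y\,|\,\lambda)=\ln\theta$ and $S(\nu^{(2)}_Y\,|\,\lambda)=\ln(1-\theta)$, the $\ln\theta$ and $\ln(1-\theta)$ constants cancel and one is left with
\[
S(\nu\,|\,\lambda^{\otimes 2}) = \theta\,S(\widehat{\nu}^{(1)}\,|\,\lambda^{\otimes 2}) + (1-\theta)\,S(\widehat{\nu}^{(2)}\,|\,\lambda^{\otimes 2}) + \theta\,S(\nu_X^{(1)}\,|\,\lambda) + (1-\theta)\,S(\nu_X^{(2)}\,|\,\lambda).
\]

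For the energy: split $2\mathcal{E}(\nu) = \int\!\!\int h\,d\nu\,d\nu$ over the four product sets $\Lambda_i(\theta)\times\Lambda_j(\theta)$. The two diagonal blocks contribute $\theta^2\cdot 2\mathcal{E}(\nu^{(1)})$ and $(1-\theta)^2\cdot 2\mathcal{E}(\nu^{(2)})$. On $\Lambda_1\times\Lambda_2$ the first $y$-coordinate is $\le\theta$ and the second is $>\theta$, so $y_1-y_2<0$ a.e.\ and hence $h\big((x_1,y_1),(x_2,y_2)\big)=\mathbf{1}_{(-\infty,0)}(x_2-x_1)$, a function of the $x$-coordinates only; on $\Lambda_2\times\Lambda_1$ one symmetrically gets $\mathbf{1}_{(-\infty,0)}(x_1-x_2)$, which after relabelling is the same integral against $\nu_X^{(1)}\otimes\nu_X^{(2)}$. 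Thus
\[
\mathcal{E}(\nu) = \theta^2\,\mathcal{E}(\nu^{(1)}) + (1-\theta)^2\,\mathcal{E}(\nu^{(2)}) + \theta(1-\theta)\int_0^1\!\!\int_0^1 \mathbf{1}_{(-\infty,0)}(x_2-x_1)\,d\nu_X^{(1)}(x_1)\,d\nu_X^{(2)}(x_2),
\]
and Corollary~\ref{cor:normalized} lets me replace $\mathcal{E}(\nu^{(i)})$ by $\mathcal{E}(\widehat{\nu}^{(i)})$.

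Finally I would substitute these two expansions into $\mathcal{I}_{\beta}(\nu) = p(\beta) - S(\nu\,|\,\lambda^{\otimes 2}) + \beta\,\mathcal{E}(\nu)$ and recognize, via (\ref{eq:I}) at the diluted inverse temperatures, that $-\theta\,S(\widehat{\nu}^{(1)}\,|\,\lambda^{\otimes 2}) + \beta\theta^2\,\mathcal{E}(\widehat{\nu}^{(1)}) = \theta\big(\mathcal{I}_{\beta\theta}(\widehat{\nu}^{(1)}) - p(\beta\theta)\big)$, and likewise with $1-\theta$ in place of $\theta$; collecting terms then yields exactly the stated identity. Essentially everything here is arithmetic bookkeeping; the two places that need a little care are the cancellation of the constants $\ln\theta$ and $\ln(1-\theta)$ against the $y$-marginal relative entropies in the entropy step, and the sign analysis of $h$ on the off-diagonal blocks $\Lambda_1\times\Lambda_2$ and $\Lambda_2\times\Lambda_1$, which is what produces the final cross term $\beta\theta(1-\theta)\int\!\!\int\mathbf{1}_{(-\infty,0)}(x_2-x_1)\,d\nu_X^{(1)}\,d\nu_X^{(2)}$.
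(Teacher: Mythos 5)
Your proposal is correct and uses essentially the same decomposition as the paper: split $S(\nu\,|\,\lambda^{\otimes 2})$ and $\mathcal{E}(\nu)$ over the two strips, use the support of $\nu^{(1)},\nu^{(2)}$ to reduce the off-diagonal energy term to a function of the $x$-marginals only, and apply Corollary~\ref{cor:normalized} to pass from $\nu^{(i)}$ to $\widehat{\nu}^{(i)}$, after which the $\ln\theta$ and $\ln(1-\theta)$ contributions cancel. The paper merely performs the steps in a slightly different order (introducing the diluted rate functions $\mathcal{I}_{\beta\theta}(\nu^{(1)})$, $\mathcal{I}_{\beta(1-\theta)}(\nu^{(2)})$ first and then normalizing via (\ref{eq:Ihat})), so the bookkeeping is rearranged but the ideas are identical.
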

\begin{proof}
A straightforward computation using the definition of the entropy shows that, defining $s(\theta,1-\theta) = -\theta \ln(\theta) - (1-\theta) \ln(1-\theta)$,
\begin{align*}
\mathcal{I}_{\beta}(\nu)\,
&=\, p(\beta)-s(\theta,1-\theta) - \theta S(\nu^{(1)} | \lambda^{\otimes 2})
- (1-\theta) S(\nu^{(2)} | \lambda^{\otimes 2})
+ \beta \theta^2 \mathcal{E}(\nu^{(1)})
+ \beta (1-\theta)^2 \mathcal{E}(\nu^{(2)})\\
&\qquad 
+ \beta \theta (1-\theta) \int_{[0,1]^2} \int_{[0,1]^2} h\big((x_1,y_1),(x_2,y_2)\big)\, d\nu^{(1)}(x_1,y_1)\, d\nu^{(2)}(x_2,y_2)\, .
\end{align*}
We note that, because $\nu^{(1)}$ has support in $\Lambda^{(1)}$ and  $\nu^{(2)}$ has support in $\Lambda_2$
 we may rewrite this as 
\begin{align*}
\mathcal{I}_{\beta}(\nu)\,
&=\, p(\beta)-s(\theta,1-\theta) - \theta S(\nu^{(1)} | \lambda^{\otimes 2})
- (1-\theta) S(\nu^{(2)} | \lambda^{\otimes 2})
+ \beta \theta^2 \mathcal{E}(\nu^{(1)})
+ \beta (1-\theta)^2 \mathcal{E}(\nu^{(2)})\\
&\qquad 
+ \beta \theta (1-\theta) \int_{0}^1 \int_{0}^1 \mathbf{1}_{(-\infty,0)}(x_2-x_1)\, d\nu^{(1)}_X(x_1)\, d\nu^{(2)}_X(x_2)\, .
\end{align*}
Now we note that using the definition of $\mathcal{I}_{\beta}$ for all $\beta$'s, this may be rewritten as 
\begin{align*}
\mathcal{I}_{\beta}(\nu)\,
&=\, p(\beta)-s(\theta,1-\theta) +\theta \mathcal{I}_{\beta \theta}(\nu^{(1)}) - \theta p(\beta \theta)
+ (1-\theta) \mathcal{I}_{\beta (1-\theta)}(\nu^{(2)}) - (1-\theta) p(\beta(1-\theta))\\
&\qquad 
+ \beta \theta (1-\theta) \int_{0}^1 \int_{0}^1 \mathbf{1}_{(-\infty,0)}(x_2-x_1)\, d\nu^{(1)}_X(x_1)\, d\nu^{(2)}_X(x_2)\, ,
\end{align*}
Finally, using (\ref{eq:Ihat}), we may rewrite this as 
\begin{align*}
\mathcal{I}_{\beta}(\nu)\,
&=\, p(\beta)-s(\theta,1-\theta)- \theta p(\beta \theta) - (1-\theta) p(\beta(1-\theta))
+\theta \mathcal{I}_{\beta \theta}(\widehat{\nu}^{(1)}) 
+ (1-\theta) \mathcal{I}_{\beta (1-\theta)}(\widehat{\nu}^{(2)}) \\
&\qquad - \theta S(\nu^{(1)}_X\, |\, \lambda) - \theta S(\nu^{(1)}_Y\, |\, \lambda)
- (1-\theta) S(\nu^{(2)}_X\, |\, \lambda) - (1-\theta) S(\nu^{(2)}_Y\, |\, \lambda)\\
&\qquad 
+ \beta \theta (1-\theta) \int_{0}^1 \int_{0}^1 \mathbf{1}_{(-\infty,0)}(x_2-x_1)\, d\nu^{(1)}_X(x_1)\, d\nu^{(2)}_X(x_2)\, ,
\end{align*}
Using (\ref{eq:yMar}), we may calculate $S(\nu^{(1)}_Y\, |\, \lambda)$ and
$S(\nu^{(2)}_Y\, |\, \lambda)$.
This yields the desired result.
\end{proof}

\begin{corollary}
\label{cor:2square}
For any measures $\mu,\widetilde{\mu} \in \mathcal{M}_{+,1}([0,1])$,
both of which are absolutely continuous with respect to $\lambda$,
we have, for each $\beta \in \R$ and each $\theta \in (0,1)$,
\begin{multline}
\label{ineq:2square0}
- \theta S(\mu\, |\, \lambda)
-(1-\theta) S(\widetilde{\mu}\, |\, \lambda)
+\beta \theta(1-\theta) \int_{0}^{1} \int_{0}^{1}
\mathbf{1}_{(-\infty,0)}(x_2-x_1)\, d{\mu}(x_1)\, d\widetilde{\mu}(x_2)\\
\geq\, 
\theta p(\beta \theta) + (1-\theta) p(\beta(1-\theta)) - p(\beta)\, .
\end{multline}
Moreover, there does exist a pair of measures $\mu,\widetilde{\mu} \in \mathcal{M}_{+,1}([0,1])$ giving equality.
\end{corollary}
\begin{proof}
Recall that for each $\beta \in \R$, we do know that there exists at least one measure in $\mathcal{M}_{+,1}([0,1]^2)$
which minimizes $\mathcal{I}_{\beta}$, using soft analysis, especially lower semi-continuity and weak-compactness. For each $\beta \in \R$, we choose one such measure and call it $\widetilde{\nu}^*_{\beta}$.

Let us define $\kappa,\widetilde{\kappa} \in \mathcal{M}_{+,1}([0,1])$ by
$d\kappa(y) = \theta^{-1} \mathbf{1}_{[0,\theta]}(y)\, dy$
and $d\widetilde{\kappa}(y) = (1-\theta)^{-1} \mathbf{1}_{(\theta,1]}(y)\, dy$.
Then we may define two measures $\xi,\widetilde{\xi} \in \mathcal{M}([0,1]^2)$
by $\xi = \mathfrak{N}(\widetilde{\nu}^*_{\beta\theta},F_{\mu},F_{\kappa})$,
where $F_{\mu}$ and $F_{\kappa}$ are the distribution functions for $\mu$ and $\kappa$, respectively,
and with a similar definition for $\widetilde{\xi}$ based on $\widetilde{\nu}^*_{\beta(1-\theta)}$, $\widetilde{\mu}$ and $\widetilde{\kappa}$.
Then taking $\nu = \theta \xi + (1-\theta) \widetilde{\xi}$, we will have $\nu^{(1)} = \xi$, $\nu^{(2)} = \widetilde{\xi}$, et cetera.
In particular, we will have $\mathcal{I}_{\beta\theta}(\widehat{\nu}^{(1)})=
\mathcal{I}_{\beta(1-\theta)}(\widehat{\nu}^{(2)})=0$
because $\mathcal{I}_{\beta\theta}(\widetilde{\nu}^*_{\beta \theta}) = 0$ and $\mathcal{I}_{\beta(1-\theta)}(\widetilde{\nu}^*_{\beta(1-\theta)})=0$.
So applying Lemma \ref{lem:2square} and using Proposition \ref{prop:Ellis}, we obtain the inequality.

To prove that there does exist a case of equality,
take $\nu \in \mathcal{M}_{+,1}([0,1]^2)$ to be $\widetilde{\nu}^*_{\beta}$, which we know exists
since we do know that a minimizer 
for $\mathcal{I}_{\beta}$ does exist.
Then taking the $\nu_X^{(1)}$ and $\nu_X^{(2)}$ as at the beginning of the section,
we obtain equality.
\end{proof}

\begin{remark}
We note that the proof of the corollary implies that taking $\nu$ to be the optimizer $\widetilde{\nu}^*_{\beta}$,
which we recall is possibly only one of multiple optimizers for $\mathcal{I}_{\beta}$, as far as we have proved, so far.
Then the proof implies that $\widehat{\nu}_X^{(1)}$ and $\widehat{\nu}_X^{(2)}$ are potential choices for $\widetilde{\nu}^*_{\beta \theta}$ 
and $\widetilde{\nu}^*_{(1-\theta)\beta}$. We need not make such choices, explicitly, though.
We point out that this formula is satisfied for the density which is proposed as the unique optimizer.
This can be checked using calculus, for instance.
We leave this exercise to any interested reader (since it is not necessary for the present proof).

We will eventually prove that there is a unique optimizer and that it has the proposed density.
But we will not check that particular calculus fact, explicitly, as such.
The reason is as stated in Remark \ref{rem:ForRef3}.
The inequality is true in general, as has been proved.
One could substitute the known formula for the optimizer to check equality.
(We thank a referee for pointing this out.)
But the direction of the implication in the present proof is that for any optimizer, the inequality must be saturated.
Therefore, we will ultimately conclude from these types of inequalities that the optimizer is unique and has the proposed form.
\end{remark}

\section{The four-square problem}

Suppose that $\theta_1,\theta_2 \in (0,1)$.
In order to simplify notation, let us denote $\Lambda_{ij}(\theta_1,\theta_2)$ just as $\Lambda_{ij}$, for this section.
We assume that we have $(t_{11},t_{12},t_{21},t_{22}) \in \Sigma_4$, defined.
Let us assume, for now, that all four numbers are positive.
We will say later what changes if some $t_{ij}$ equals $0$.

We consider $\nu \in W_{\theta_1,\theta_2}(t_{11},t_{12},t_{21},t_{22})$.
We define four measures, $\nu^{(i,j)}$ for $i,j \in \{1,2\}$ by
\begin{equation}
\label{eq:nuijDecomp}
d\nu^{(i,j)}(x,y)\, =\, t_{ij}^{-1} \mathbf{1}_{\Lambda_{ij}}(x,y)\, d\nu(x,y)\, .
\end{equation}

\begin{lemma}
\label{lem:4square}
Assuming that $\nu \ll \lambda^{\otimes 2}$ and $\nu_X=\nu_Y=\lambda$, using the notation above,
\begin{equation}
\label{eq:calIformula}
\begin{split}
\mathcal{I}_{\beta}(\nu)\,
&=\, p(\beta) + t_{11} \ln(t_{11}) 
+  t_{12} \ln(t_{12}) + t_{21} \ln(t_{21}) + t_{22} \ln(t_{22})\\
&\qquad + \sum_{i,j=1}^{2} \left(- t_{ij} S(\nu^{(i,j)}_X\, |\, \lambda)
-t_{ij} S(\nu^{(i,j)}_Y\, |\, \lambda) + t_{ij} \mathcal{I}_{\beta t_{ij}}(\widehat{\nu}^{(i,j)})
-t_{ij} p(\beta t_{ij})\right)\\
&\qquad + \beta t_{11} t_{12} \int_{0}^{1} \int_{0}^{1} \mathbf{1}_{(-\infty,0)}(x_2-x_1)\, 
d\nu_X^{(1,1)}(x_1)\, d\nu_X^{(1,2)}(x_2)\\
&\qquad + \beta t_{11} t_{21} \int_{0}^{1} \int_{0}^{1} \mathbf{1}_{(-\infty,0)}(y_2-y_1)\,
d\nu^{(1,1)}_Y(y_1)\, d\nu^{(2,1)}_Y(y_2)\\
&\qquad + \beta t_{21} t_{22} \int_{0}^{1} \int_{0}^{1} \mathbf{1}_{(-\infty,0)}(x_2-x_1)\,
d\nu^{(2,1)}_X(x_1)\, d\nu^{(2,2)}_X(x_2)\\
&\qquad + \beta t_{12} t_{22} \int_{0}^{1} \int_{0}^{1} \mathbf{1}_{(-\infty,0)}(x_2-x_1)\,
d\nu^{(1,2)}_Y(y_1)\, d\nu^{(2,2)}_Y(y_2)\\
&\qquad + \beta t_{12} t_{21}\, .
\end{split}
\end{equation}
\end{lemma}
\begin{proof}
One goes through the same steps as in the proof of Lemma \ref{lem:2square}.
\end{proof}

We want to use this to prove Theorem \ref{thm:4square}.
The idea of the completion of the proof is to use Corollary \ref{cor:2square}.
But first we generalize it slightly.

\begin{corollary}
\label{cor:2squareB}
Suppose $\nu_X^{(1)},\nu_X^{(2)} \in \mathcal{M}_{+,1}([0,1])$,
both have support inside an interval $[a,b]$ with $b-a=\theta \in (0,1)$.
Then, for each $\beta \in \R$ and each $t_1,t_2 \in (0,1)$,
\begin{multline}
\label{ineq:2square}
- t_1 S(\nu_X^{(1)}\, |\, \lambda)
- t_2 S(\nu_X^{(2)}\, |\, \lambda)
+\beta t_1 t_2 \int_{0}^{1} \int_{0}^{1}
\mathbf{1}_{(-\infty,0)}(x_2-x_1)\, d\nu_X^{(1)}(x_1)\, d\nu_X^{(2)}(x_2)\\
\geq\, -(t_1+t_2) \ln(\theta)
+t_1 p(\beta t_1) + t_2 p(\beta t_2) - (t_1+t_2) p(\beta(t_1+t_2))\, .
\end{multline}
Moreover, there does exist a pair of measures $\nu_X^{(1)},\nu_X^{(2)} \in \mathcal{M}_{+,1}([0,1])$,
both having support inside $[a,b]$, giving equality.
\end{corollary}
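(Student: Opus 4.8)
The plan is to deduce Corollary~\ref{cor:2squareB} from Corollary~\ref{cor:2square} by an order-preserving affine rescaling of $[a,b]$ onto $[0,1]$, combined with a rescaling of the inverse temperature.

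First I would introduce the increasing affine bijection $\phi : [a,b] \to [0,1]$, $\phi(x) = (x-a)/\theta$, and push the two measures forward, setting $\mu^{(i)} = \phi_{*}\nu_X^{(i)} \in \mathcal{M}_{+,1}([0,1])$ for $i=1,2$. Since $\nu_X^{(i)} \ll \lambda$, also $\mu^{(i)} \ll \lambda$, and an elementary change of variables (under $\phi$ the density acquires the Jacobian factor $\theta$) gives, for $i = 1,2$,
$$
S\big(\nu_X^{(i)}\, \big|\, \lambda\big)\, =\, S\big(\mu^{(i)}\, \big|\, \lambda\big) + \ln\theta\, .
$$
Because $\phi$ is increasing, $x_2 - x_1 < 0 \iff \phi(x_2) - \phi(x_1) < 0$, so the interaction integral is unchanged:
$$
\int_0^1 \int_0^1 \mathbf{1}_{(-\infty,0)}(x_2-x_1)\, d\nu_X^{(1)}(x_1)\, d\nu_X^{(2)}(x_2)\, =\, \int_0^1 \int_0^1 \mathbf{1}_{(-\infty,0)}(u_2-u_1)\, d\mu^{(1)}(u_1)\, d\mu^{(2)}(u_2)\, .
$$

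Next, set $s = t_1+t_2$ and $\alpha = t_1/s \in (0,1)$, so that $t_1 = s\alpha$ and $t_2 = s(1-\alpha)$. Substituting the entropy identity into the left side of the asserted inequality produces a combined term $-(t_1+t_2)\ln\theta$ there, which exactly cancels the corresponding term on the right side; dividing what remains by $s > 0$ reduces the inequality to
\begin{multline*}
-\alpha\, S\big(\mu^{(1)}\, \big|\, \lambda\big)\, -\, (1-\alpha)\, S\big(\mu^{(2)}\, \big|\, \lambda\big)\, +\, (\beta s)\, \alpha(1-\alpha) \int_0^1 \int_0^1 \mathbf{1}_{(-\infty,0)}(u_2-u_1)\, d\mu^{(1)}(u_1)\, d\mu^{(2)}(u_2)\\
\geq\, \alpha\, p(\beta s \alpha)\, +\, (1-\alpha)\, p\big(\beta s(1-\alpha)\big)\, -\, p(\beta s)\, ,
\end{multline*}
which is exactly Corollary~\ref{cor:2square} applied at inverse temperature $\beta s$, with the parameter ``$\theta$'' there taken to be $\alpha$ and with $\mu = \mu^{(1)}$, $\widetilde{\mu} = \mu^{(2)}$. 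For the equality assertion I would run this backward: take the pair $\mu, \widetilde{\mu}$ furnished by Corollary~\ref{cor:2square} at inverse temperature $\beta s$ and parameter $\alpha$, transport them back to $[a,b]$ via $\phi^{-1}$, and observe that every step above is an exact identity, so equality is preserved.

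The only point requiring real care is the Jacobian bookkeeping: verifying that the affine rescaling shifts each relative entropy by precisely $\ln\theta$, so that the two shifts on the left reproduce exactly the $-(t_1+t_2)\ln\theta$ term appearing on the right of the asserted inequality. No boundary cases arise, since $t_1, t_2$ are assumed in the open interval and hence $s \in (0,2)$, $\alpha \in (0,1)$; once the entropy shift is checked, the remainder is a routine order-preserving change of variables together with the already-established Corollary~\ref{cor:2square}.
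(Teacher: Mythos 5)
Your proof is correct and follows the paper's own route: push forward via the increasing affine map $[a,b]\to[0,1]$, use the relative-entropy shift $S(\nu_X^{(i)}\,|\,\lambda)=S(\mu^{(i)}\,|\,\lambda)+\ln\theta$, observe the interaction integral is invariant, and reduce to Corollary~\ref{cor:2square} at the renormalized inverse temperature $\beta(t_1+t_2)$ with splitting parameter $t_1/(t_1+t_2)$. You have simply made explicit the temperature-rescaling bookkeeping that the paper dismisses as a ``straightforward calculation,'' and your handling of the equality case by transporting back along $\phi^{-1}$ matches the intended argument.
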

\begin{proof}
This follows from Corollary \ref{cor:2square} by making some scaling transformations.
We may define $\widetilde{\nu}_X^{(1)}$ and $\widetilde{\nu}_X^{(2)}$ by
$$
\widetilde{\nu}_X^{(i)}(A)\, =\, \nu_X^{(i)}(\{a+(b-a)x\, :\, x \in A\})\, .
$$
It is easy to see that $S(\nu_X^{(i)}\, |\, \lambda) = \ln(\theta) + S(\widetilde{\nu}_X^{(i)}\, |\, \lambda)$.
With this, the result follows by straightforward calculations.
\end{proof}

Now we may prove Theorem \ref{thm:4square}, including the case where some $t_{ij}$ may equal $0$.

\begin{proofof}{\bf Proof of Theorem \ref{thm:4square}:}
At first, assume that all $t_{ij}$'s are strictly positive, as we have been assuming in this section, up to this point.
Combining Lemma \ref{lem:4square} with Corollary \ref{cor:2squareB}, and writing $|\Lambda_{ij}|$ for $\lambda^{\otimes 2}(\Lambda_{ij})$, we obtain
\begin{equation}
\label{eq:4squareB}
\begin{split}
\mathcal{I}_{\beta}(\nu)\,
&\geq\, p(\beta) 
+ \sum_{i,j=1}^{2} t_{ij} \ln\left(\frac{t_{ij}}{|\Lambda_{ij}|}\right) \\
&\qquad + \sum_{i,j=1}^{2} t_{ij} \mathcal{I}_{\beta t_{ij}}(\widehat{\nu}^{(i,j)}) +
\sum_{i,j=1}^{2} t_{ij} p(\beta t_{ij}) \\
&\qquad - (t_{11}+t_{12}) p(\beta (t_{11}+t_{12})) - (t_{11}+t_{21}) p(\beta (t_{11}+t_{21}))\\
&\qquad
- (t_{12}+t_{22}) p(\beta (t_{12}+t_{22})) - (t_{21}+t_{22}) p(\beta (t_{21}+t_{22}))\\
&\qquad + \beta t_{12} t_{21}\, .
\end{split}
\end{equation}
Finally, using Proposition \ref{prop:Ellis}, we obtain
\begin{equation}
\label{ineq:4squareC}
\begin{split}
\mathcal{I}_{\beta}(\nu)\,
&\geq\, p(\beta) 
+ \sum_{i,j=1}^{2} t_{ij} \ln\left(\frac{t_{ij}}{|\Lambda_{ij}|}\right) 
+
\sum_{i,j=1}^{2} t_{ij} p(\beta t_{ij}) \\
&\qquad - (t_{11}+t_{12}) p(\beta (t_{11}+t_{12})) - (t_{11}+t_{21}) p(\beta (t_{11}+t_{21}))\\
&\qquad
- (t_{12}+t_{22}) p(\beta (t_{12}+t_{22})) - (t_{21}+t_{22}) p(\beta (t_{21}+t_{22}))\\
&\qquad + \beta t_{12} t_{21}\, .
\end{split}
\end{equation}
We use Proposition \ref{prop:Ellis} to lower bound $\mathcal{I}_{\beta t_{ij}}(\widehat{\nu}^{(i,j)})$
by $0$ everywhere.

The cases of equality follow from the cases of equality in Corollary \ref{cor:2squareB}
as well as the fact that $\tilde{\nu}_{\cdot}^*$ measures do exist to give the minimum in $\mathcal{I}_{\cdot}$.
There is no obstruction to putting these together using $\mathfrak{N}$
somewhat similarly to what we did in the proof of \ref{cor:2square} but with 4 squares instead of 2. 
Corollary \ref{cor:2squareB} give marginals,
and $\tilde{\nu}_{\cdot}^*$ gives values for $\widehat{\nu}$'s.
Then we may use Definition \ref{def:frakN} as a prescription for obtaining $\nu^{(i,j)}$.
See Subsection \ref{subsec:demo} for a demonstration.

Finally, for the case that some $t_{ij}$ equals $0$, all that happens is that the actual value of $\nu^{(i,j)}$ is irrelevant.
In particular note that there is no source of discontinuity of $S$ arising from this.
The Lebesgue measure $|\Lambda_{ij}|$ is fixed and positive because $(\theta_1,\theta_2) \in (0,1)^2$.
All that happens is that the density becomes zero. But $\phi(x) = -x \ln(x)$ is continuous at $0$, since it is defined to be $\phi(0)=0$.
\end{proofof}

We next demonstrate the reconstruction procedure using $\mathfrak{N}$.
This serves two purposes.
First it completes the exercise stated in the proof.
Second it is nice to have a more explicit guide to the construction
later, after we establish uniqueness.
Then we will be able to substitute actual formulas for $\nu_{\beta}$, instead of an existential 
$\widetilde{\nu}^*_{\beta}$.

\subsection{Demonstration of reconstruction}
\label{subsec:demo}

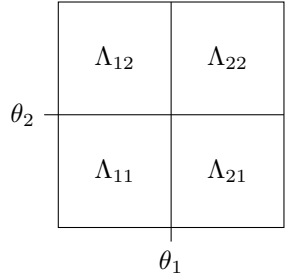
\begin{figure}
    \centering
    \begin{tikzpicture}[xscale=1.5,yscale=1.5]
    \draw (0,0) rectangle (2,2);
    \draw (-0.125,1) -- (2,1);
    \draw (1,-0.125) -- (1,2);
    \draw (0.5,0.5) node[] {$\Lambda_{11}$};
    \draw (1.5,0.5) node[] {$\Lambda_{21}$};
    \draw (0.5,1.5) node[] {$\Lambda_{12}$};
    \draw (1.5,1.5) node[] {$\Lambda_{22}$};
    \draw (1,-0.125) node[below] {$\theta_1$};
    \draw (-0.125,1) node[left] {$\theta_2$};
    \end{tikzpicture}
    \caption{Decomposition of $[0,1]^2$ into $\Lambda_{ij}=\Lambda_{ij}(\theta_1,\theta_2)$ for $i,j\in\{1,2\}$.}
    \label{fig:pic1}
\end{figure}
Let us abbreviate $\Lambda_{ij}(\theta_1,\theta_2)$ by just $\Lambda_{ij}$.
We refer to Figure \ref{fig:pic1} for a reminder of the decomposition defined in (\ref{eq:defLambbda}).

We remind ourselves that we are seeking a $\nu$ which makes the inequality into an equality. We decompose $\nu$
as 
\begin{equation*}
    \nu\, =\, \sum_{i,j=1}^{2} t_{ij} \nu^{(ij)}\, ,
\end{equation*}
as is consistent with (\ref{eq:nuijDecomp}). We note that $\nu^{(ij)}([0,1]^2) = \nu^{(ij)}(\Lambda_{ij})=1$
so that the measures are all normalized to be probability measures.
Also, $\nu^{(ij)}$ has support in $\Lambda_{ij}$.

\subsubsection{General program}

Let us review the basic problem.

We will reconstruct $\nu^{(ij)}$ in steps: first obtaining the $x$ and $y$ marginals, $\widetilde{\nu}^{(ij)}_X$ and $\widetilde{\nu}^{(ij)}_Y$;
then obtaining the standardized measures $\widehat{\nu}^{(ij)}$.
But for reasons of simplicity, we will rescale the arguments of these measures, 
such that $\widetilde{\nu}^{(ij)}_X$ and $\widetilde{\nu}^{(ij)}_Y$
have natural support $[0,1]$ (instead of $L_1(\theta_1)$ and $L_2(\theta_2)$) and $\widehat{\nu}^{(ij)}$ has support $[0,1]^2$ (instead of $\Lambda_{ij}(\theta_1,\theta_2)$).

Then we may integrate these together into $\mathfrak{N}(\widehat{\nu}^{(ij)},\widetilde{F}^{(ij)}_X,\widetilde{F}^{(ij)}_Y)$,
where $\widetilde{F}^{(ij)}_X$ and $\widetilde{F}^{(ij)}_Y$ are the distribution functions for 
$\widetilde{\nu}^{(ij)}_X$ and $\widetilde{\nu}^{(ij)}_Y$.
This is still not $\nu^{(ij)}$, but it is close.
If we denote it by $\widetilde{\nu}^{(ij)}$, then all that is needed is to  rescale the argument so that the support is returmed back to $\Lambda_{ij}$.
In other words
$$
\nu^{(ij)}(A)\, =\, \widetilde{\nu}^{(ij)}(\{(x,y)\, :\, (a_i^{(1)} + (b_i^{(1)}-a_i^{(1)}) x,a_j^{(2)}+(b_j^{(2)}-a_j^{(2)})y) \in A \cap \Lambda_{ij}\})\, ,
$$
where we write $a_i^{(1)}  =\inf(L_i(\theta_1))$ and $b_i^{(1)}=\sup(L_i(\theta_1))$ are the endpoints of $L_i(\theta_1)$,
and similarly
$a_j^{(2)}  =\inf(L_j(\theta_2))$ and $b_j^{(2)}=\sup(L_j(\theta_2))$ are the endpoints of $L_j(\theta_2)$.

Having stated the general program of solution. We now state how to choose optimizers of the various inequalities,
which saturate in the sense that the inequalities are actually equalities.

\subsubsection{The two square problems and the marginals}

We now consider the two square problems, which give the marginals. There are 4 separate two-square problems.
\begin{itemize}
    \item The two-square problem for $\Lambda_{1,1}$ and $\Lambda_{2,1}$ which comprise the bottom horizontal block.
    The solution involves the $y$ marginals for $\nu^{(1,1)}$
    and $\nu^{(2,1)}$.
    \item The two-square problem for $\Lambda_{1,1}$ and $\Lambda_{1,2}$ which comprise the left vertical block.
    The solution involves the $x$ marginals for $\nu^{(1,1)}$
    and $\nu^{(1,2)}$.
    \item The two-square problem for $\Lambda_{1,2}$ and $\Lambda_{2,2}$ which comprise the top horizontal block.
    The solution involves the $y$ marginals for $\nu^{(1,2)}$
    and $\nu^{(2,2)}$.
    \item The two-square problem for $\Lambda_{2,1}$ and $\Lambda_{2,2}$ which comprise the right vertical block.
    The solution involves the $x$ marginals for $\nu^{(2,1)}$
    and $\nu^{(2,2)}$.
\end{itemize}

\underline{\em First two-square problem:}
Starting with the first two-square problem, we can determine some valid choices for $\widetilde{\nu}^{(1,1)}_Y$
and $\widetilde{\nu}^{(2,1)}_Y$.
In obtaining equation (\ref{eq:4squareB}) from (\ref{eq:calIformula}), we used Corollary \ref{cor:2squareB} for each of the 4
distinct two-square problems.
As pertains to the 1st of these two-square problems, we used the inequality to replace 
\begin{equation*}
    -t_{1,1} S(\nu^{(1,1)}_Y\, |\, \lambda) - t_{2,1} S(\nu^{(2,1)}_Y\, |\, \lambda) 
    + \beta t_{11} t_{21} \int_{0}^{1} \int_{0}^{1} \mathbf{1}_{(-\infty,0)}(y_2-y_1)\,
        d\nu^{(1,1)}_Y(y_1)\, d\nu^{(2,1)}_Y(y_2)
\end{equation*}
by its lower bound
\begin{equation*}
    -(t_{11}+t_{21}) \ln(\theta_1) + t_{11} p(\beta t_{11}) + t_{21} p(\beta t_{21}) - (t_{11}+t_{21}) p(\beta (t_{11}+t_{21}))\, .
\end{equation*}
The inequality, in turn, follows from 
by rescaling the supports of  $\nu^{(1,1)}_Y$ and $\nu^{(2,1)}_Y$ to get $\widetilde{\nu}^{(1,1)}_Y$ and $\widetilde{\nu}^{(2,1)}_Y$,
and then applying Corollary \ref{cor:2square} to these.

We replace $\mu$ by $\widetilde{\nu}^{(1,1)}_Y$ and $\widetilde{\mu}$ by $\widetilde{\nu}^{(2,1)}_Y$.
We replace $\beta$ in Corollary \ref{cor:2square} by the new value $(t_{11}+t_{12})\beta$.
Then we replace $\theta$ by $t_{11}/(t_{11}+t_{21})$.
In applying the inequality we eventually multiply every term on both sides by $(t_{11}+t_{21})$ to undo certain divisions.
But that operation is not relevant to us when seeking the measures, themselves, (or rather some of the possibly many measures) which saturate the inequality.

The important thing is that in that corollary we replace $\nu$ by $\widetilde{\nu}^*_{\beta(t_{11}+t_{21})}$ now (instead of $\widetilde{\nu}^*_{\beta}$
as in the original proof, because of our replacement of $\beta$).
We then obtain (some) optimizers, following the procedure outlined in Section \ref{sec:2square}.
Namely, define $\Lambda_1 = [0,t_{11}/(t_{11}+t_{21})]\times [0,1]$ and $\Lambda_2 = (t_{11}/(t_{11}+t_{21}),1]\times [0,1]$
(where we have switched the argument for taking the marginal from $x$ to $y$ just for consistency), and let 
\begin{equation*}
    \nu^{(1)}(\cdot)\, =\, \frac{t_{11}+t_{21}}{t_{11}}\, \cdot \widetilde{\nu}^*_{\beta(t_{11}+t_{21})}(\cdot \cap \Lambda_1)\, ,
\end{equation*}
and
\begin{equation*}
    \nu^{(2)}(\cdot)\, =\, \frac{t_{11}+t_{21}}{t_{21}}\, \cdot \widetilde{\nu}^*_{\beta(t_{11}+t_{21})}(\cdot \cap \Lambda_2)\, .
\end{equation*}
Then we take $\widetilde{\nu}^{(11)}_Y$ to be the $y$ marginal of $\nu^{(1)}$, and we take $\widetilde{\nu}^{(21)}_Y$
to be the $y$ marginal of $\nu^{(2)}$.
This is a choice which makes the inequality in (\ref{ineq:2square}) into an equality for its application in this two-square problem. (This does not itself imply that there are not many other
possible optimizers, as well.)

As an aside, we may note that once we have determined $\widetilde{\nu}^{(11)}_Y$, then we can calculate $\widetilde{\nu}^{(21)}_Y$
directly from it, using 
\begin{equation}
\label{eq:fsq1simp}
    \frac{t_{11}}{t_{11}+t_{21}}\, \cdot \nu^{(1)} + \frac{t_{21}}{t_{11}+t_{21}}\, \cdot \nu^{(2)}\,
    =\, \widetilde{\nu}^*_{\beta(t_{11}+t_{21})}\qquad
    \Rightarrow\quad \frac{t_{11}}{t_{11}+t_{21}}\, \cdot \widetilde{\nu}_Y^{(1,1)} + \frac{t_{21}}{t_{11}+t_{21}}\, \cdot \widetilde{\nu}_Y^{(2,1)}\,
    =\, \lambda\, ,
\end{equation}
because $\nu = \widetilde{\nu}^*_{\beta (t_{11}+t_{21})}$ does have both marginals equal to $\lambda$.
That would allow us to bypass the calculation of the marginal of $\nu^{(2)}$.

So, to summarize, we have that
\begin{equation}
    \label{eq:first2squareSumm}
    \widetilde{\nu}^{(1,1)}_Y(\cdot)\, =\, \frac{t_{11}+t_{21}}{t_{11}}\, \cdot \widetilde{\nu}^*_{\beta(t_{11}+t_{21})}
    (\{(x,y) \in [0,1]^2\, :\, x \in [0,t_{11}/(t_{11}+t_{12})]\, ,\ y \in \cdot])\, .
\end{equation}

\underline{\em Second two-square problem:}
Using the second two-square problem, we obtain a formula which is symmetric to (\ref{eq:first2squareSumm}):
\begin{equation}
    \label{eq:secnd2squareSumm}
    \widetilde{\nu}^{(1,1)}_X(\cdot)\, =\, \frac{t_{11}+t_{12}}{t_{11}}\, \cdot \widetilde{\nu}^*_{\beta(t_{11}+t_{12})}
    (\{(x,y) \in [0,1]^2\, :\, x \in \cdot\, ,\ y \in [0,t_{11}/(t_{11}+t_{21})]])\, .
\end{equation}
Again, this is just a choice for $\widetilde{\nu}^{(1,1)}_X$ (possibly among many other choices, so far as has been determined so far)
which guarantees saturation of the inequality that allowed us to 
replace
\begin{equation*}
    -t_{1,1} S(\nu^{(1,1)}_X\, |\, \lambda) - t_{1,2} S(\nu^{(1,2)}_X\, |\, \lambda) 
    + \beta t_{11} t_{12} \int_{0}^{1} \int_{0}^{1} \mathbf{1}_{(-\infty,0)}(x_2-x_1)\,
        d\nu^{(1,1)}_X(x_1)\, d\nu^{(1,2)}_X(x_2)
\end{equation*}
by its lower bound
\begin{equation*}
    -(t_{11}+t_{12}) \ln(\theta_2) + t_{11} p(\beta t_{11}) + t_{12} p(\beta t_{12}) - (t_{11}+t_{12}) p(\beta (t_{11}+t_{12}))\, .
\end{equation*}
The relationship between $\nu^{(1,1)}_X$ and $\widetilde{\nu}^{(1,1)}_X$ is rescaling the support, as before.
And this is also the relationship between $\nu^{(1,2)}_X$ and $\widetilde{\nu}^{(1,2)}_X$.
Also, $\widetilde{\nu}^{(1,2)}_X$ may be obtained from $\widetilde{\nu}^{(1,1)}_X$
\begin{equation}
\label{eq:fsq2simp}
\frac{t_{11}}{t_{11}+t_{12}}\, \cdot \widetilde{\nu}^{(1,1)}_X + \frac{t_{21}}{t_{11}+t_{21}}\, \cdot \widetilde{\nu}^{(1,2)}_X\,
    =\, \lambda\, ,
\end{equation}
as in (\ref{eq:fsq1simp}).

We can use the other 2 examples of two-square problems to obtain candidates for $\widetilde{\nu}^{(1,2)}_Y$ and $\widetilde{\nu}^{(2,2)}_Y$,
and then $\widetilde{\nu}^{(2,1)}_X$ and $\widetilde{\nu}^{(2,2)}_X$.

\subsubsection{Four square problem}
In going from (\ref{eq:4squareB}) to (\ref{eq:4squareC}), we used
Proposition \ref{prop:Ellis} to lower bound $\mathcal{I}_{\beta t_{ij}}(\widehat{\nu}^{(i,j)})$ by $0$
for all $i,j$.
But from the proposition, we know how to obtain equality in this inequality.
We take an optimizer. So this means we may obtain equality by choosing
\begin{equation}
\label{eq:hatnuForm}
    \widehat{\nu}^{(i,j)}\, =\, \widetilde{\nu}^*_{\beta t_{ij}}
\end{equation}
for all $i,j$.

With this we have the data of all marginals $\widetilde{\nu}^{(i,j)}_X$, $\widetilde{\nu}^{(i,j)}_Y$ and the standardized measures
$\widehat{\nu}^{(i,j)}$.
One just follows the prescription outlined above in the general program.

In Section \ref{sec:Outlook}, we will be more precise, when we give a construction of the optimizer, after first establishing
uniqueness and also calculating several formulas related to the unique choice of $\widetilde{\nu}^*_{\beta}$, namely $\nu^*_{\beta}$
such that $d\nu_{\beta}^*(x,y) = d\rho_{\beta}(x,y)\, dx\, dy$.
For now, we do not proceed any further with this demonstration, instead returning to our main goal of proving uniqueness
and deriving the formula for $\nu^*_{\beta}$ as the unique choice of $\widetilde{\nu}^*_{\beta}$.

\section{Calculus facts}

We have now proved Theorem \ref{thm:4square} and Theorem \ref{thm:Lebesgue}. (We proved them in opposite order.)
The proof of Theorem \ref{thm:main} now occupies us. It follows from calculus exercises.
We will sometimes write $t_{ij}$ for $T_{ij}(\theta_1,\theta_2;t)$, where
$$
T_{11}(\theta_1,\theta_2;t)\, =\, t\, ,\quad
T_{12}(\theta_1,\theta_2;t)\, =\, \theta_1 - t\, ,\quad
T_{21}(\theta_1,\theta_2;t)\, =\, \theta_2 - t\, ,\quad
T_{22}(\theta_1,\theta_2;t)\, =\, 1 - \theta_1 - \theta_2 + t\, .
$$
Let us summarize the main results.

\begin{lemma}
(a) For $t \in \mathcal{I}_{\theta_1,\theta_2}^o = (\max\{0,\theta_1+\theta_2-1\},\min\{\theta_1,\theta_2\})$, we have
\begin{equation}
\label{eq:2ndD}
\frac{\partial^2}{\partial t^2}\, \Phi_{\beta}(\theta_1,\theta_2;t)\, =\, \sum_{i,j=1}^{2} \frac{\beta}{2 \tanh(\beta t_{ij}/2)}\, .
\end{equation}
(b) The critical point equation $\frac{\partial}{\partial t}\Phi_{\beta}(\theta_1,\theta_2;t)=0$ is equivalent to 
\begin{equation}
\label{eq:CritPt}
\frac{(1-e^{-\beta t_{11}})(1-e^{-\beta t_{22}})}{(e^{\beta t_{12}}-1)(e^{\beta t_{21}}-1)}\, =\, 1\, .
\end{equation}
\end{lemma}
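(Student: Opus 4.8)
The plan is to differentiate $\Phi_\beta(\theta_1,\theta_2;\cdot)$ directly, after first discarding the parts of $\widetilde\Phi_\beta$ in (\ref{eq:4squareC}) that do not depend on $t$. Writing $t_{ij}=T_{ij}(\theta_1,\theta_2;t)$ so that $\partial t_{ij}/\partial t=\varepsilon_{ij}$ with $\varepsilon_{11}=\varepsilon_{22}=+1$ and $\varepsilon_{12}=\varepsilon_{21}=-1$, one notes that the four ``pairwise'' arguments are constant in $t$: $t_{11}+t_{12}=\theta_1$, $t_{11}+t_{21}=\theta_2$, $t_{12}+t_{22}=1-\theta_2$, $t_{21}+t_{22}=1-\theta_1$, so those four $p$-terms contribute a constant. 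Also $\sum_{i,j}t_{ij}\ln|\Lambda_{ij}|$ has vanishing $t$-derivative, since $\sum_{i,j}\varepsilon_{ij}=0$ and $|\Lambda_{11}|\,|\Lambda_{22}|=|\Lambda_{12}|\,|\Lambda_{21}|$. Hence, up to an additive constant, $\Phi_\beta(\theta_1,\theta_2;t)=\sum_{i,j}t_{ij}\ln t_{ij}+\sum_{i,j}t_{ij}\,p(\beta t_{ij})+\beta t_{12}t_{21}$.

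The one substantive input is an antiderivative identity for $s\mapsto s\,p(\beta s)$: from (\ref{eq:pFormula}) and the substitution $u=sx$ one gets $s\,p(\beta s)=\int_0^s\ln\!\big((1-e^{-\beta u})/(\beta u)\big)\,du$ for $s>0$, whence $\frac{d}{ds}\big(s\,p(\beta s)\big)=\ln\!\big((1-e^{-\beta s})/(\beta s)\big)$. Differentiating the reduced $\Phi_\beta$ by the chain rule, $\partial_t\big(t_{ij}\ln t_{ij}\big)=\varepsilon_{ij}(\ln t_{ij}+1)$, $\partial_t\big(t_{ij}p(\beta t_{ij})\big)=\varepsilon_{ij}\ln\!\big((1-e^{-\beta t_{ij}})/(\beta t_{ij})\big)$, and $\partial_t(\beta t_{12}t_{21})=-\beta(t_{12}+t_{21})$. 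Summing, the $\ln t_{ij}$ contributions of the first group cancel the polynomial factors $\beta t_{ij}$ sitting inside the logarithms of the second (the product $\prod_{i,j}\big((1-e^{-\beta t_{ij}})/(\beta t_{ij})\big)^{\varepsilon_{ij}}$ being manifestly positive), and $\sum_{i,j}\varepsilon_{ij}=0$ kills the constants, leaving
$$\frac{\partial}{\partial t}\Phi_\beta(\theta_1,\theta_2;t)\, =\, \ln\frac{(1-e^{-\beta t_{11}})(1-e^{-\beta t_{22}})}{(1-e^{-\beta t_{12}})(1-e^{-\beta t_{21}})}-\beta(t_{12}+t_{21})\, .$$

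Part (b) is then immediate: setting the right-hand side to zero, exponentiating, and using $e^{\beta t_{12}}(1-e^{-\beta t_{12}})=e^{\beta t_{12}}-1$ (and similarly with $t_{21}$) turns $\partial_t\Phi_\beta=0$ into (\ref{eq:CritPt}). For part (a) I differentiate the displayed formula once more. Since each $t_{ij}$ is affine in $t$ there are no second-derivative terms; logarithmic differentiation gives $\partial_t\ln(1-e^{-\beta t_{ij}})=\beta\varepsilon_{ij}e^{-\beta t_{ij}}/(1-e^{-\beta t_{ij}})$, while $\partial_t\big(-\beta(t_{12}+t_{21})\big)=2\beta$. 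Using $\varepsilon_{ij}^2=1$ this yields $\partial_t^2\Phi_\beta=\sum_{i,j}\beta e^{-\beta t_{ij}}/(1-e^{-\beta t_{ij}})+2\beta$, and absorbing the $2\beta$ as $\beta/2$ per sub-square the elementary identity $\beta e^{-\beta s}/(1-e^{-\beta s})+\beta/2=\beta(1+e^{-\beta s})/\big(2(1-e^{-\beta s})\big)=\beta/\big(2\tanh(\beta s/2)\big)$ gives (\ref{eq:2ndD}). Throughout one uses that $t\in\mathcal{I}_{\theta_1,\theta_2}^o$ forces all $t_{ij}>0$, so every logarithm, denominator and $\tanh$ above is well defined; the degenerate case $\beta=0$ follows by continuity (or directly, since then $\Phi_0=\sum_{i,j}t_{ij}\ln(t_{ij}/|\Lambda_{ij}|)$).

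I do not anticipate a genuine obstacle: once the antiderivative identity $\frac{d}{ds}(s\,p(\beta s))=\ln\!\big((1-e^{-\beta s})/(\beta s)\big)$ is in hand, the rest is bookkeeping. The only points requiring care are checking that the $t$-independent terms (the four pairwise $p$'s and the $|\Lambda_{ij}|$'s) really drop out, and keeping the signs $\varepsilon_{ij}$ straight so that the $\ln t_{ij}$ terms cancel cleanly.
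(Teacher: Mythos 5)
Your proof is correct and follows essentially the same route as the paper's: both rely on the antiderivative identity $\frac{d}{ds}\bigl(s\,p(\beta s)\bigr)=\ln\bigl((1-e^{-\beta s})/(\beta s)\bigr)$ from (\ref{eq:pFormula}), observe that the four pairwise $p$-terms are constant in $t$ since $t_{11}+t_{12}=\theta_1$, etc., differentiate by the chain rule with $\partial t_{ij}/\partial t=(-1)^{i+j}$, and then exponentiate for (b) and differentiate once more with the $\tanh$ half-angle identity for (a). The only cosmetic difference is that you discard the $t$-independent terms at the outset (noting $|\Lambda_{11}||\Lambda_{22}|=|\Lambda_{12}||\Lambda_{21}|$ explicitly), whereas the paper carries the $\ln(t_{ij}/|\Lambda_{ij}|)$ factors along until the final simplification in (\ref{eq:FirstDerivCalc2}).
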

\begin{proof}
(a)
Using (\ref{eq:4squareC}) and the definition of $T_{ij}(\theta_1,\theta_2;t)$, for $i,j \in \{1,2\}$, we have 
\begin{equation}
\label{eq:PhiDef}
\begin{split}
\Phi_{\beta}(\theta_1,\theta_2;t)\,
&=\, \Phi_{\beta}(\theta_1,\theta_2;T_{11}(\theta_1,\theta_2;t),T_{12}(\theta_1,\theta_2;t),T_{21}(\theta_1,\theta_2;t),T_{21}(\theta_1,\theta_2;t)) \\
&=\, 
p(\beta) - \theta_1 p(\beta \theta_1) - \theta_2 p(\beta \theta_2) - (1-\theta_1) p(\beta(1-\theta_1))
-(1-\theta_2) p(\beta(1-\theta_2))\\
&\qquad
+ \sum_{i,j=1}^{2} t_{ij} \ln\left(\frac{t_{ij}}{|\Lambda_{ij}|}\right) 
+ \sum_{i,j=1}^{2} t_{ij} p(\beta t_{ij})
 + \beta t_{12} t_{21}\, .
\end{split}
\end{equation}
Moreover, from (\ref{eq:pFormula}), we see that
$$
\frac{\partial}{\partial \theta}\, [\theta p(\beta \theta)]\,
=\, \ln\left(\frac{1-e^{-\beta \tau}}{\beta \tau}\right)\, .
$$
Using this with (\ref{eq:PhiDef}), we see that
\begin{equation*}
\frac{\partial}{\partial t}\, \Phi_{\beta}(\theta_1,\theta_2;t)\, 
=\, 
\sum_{i,j=1}^{2} \left[1+ \ln\left(\frac{t_{ij}}{|\Lambda_{ij}|}\right)\right]
\frac{\partial t_{ij}}{\partial t} 
+
\sum_{i,j=1}^{2} \ln\left(\frac{1-e^{-\beta t_{ij}}}{\beta t_{ij}}\right) \frac{\partial t_{ij}}{\partial t}  + \beta t_{12}\, \frac{\partial t_{21}}{\partial t} + \beta t_{21}\, \frac{\partial t_{12}}{\partial t}\, .
\end{equation*}
Substituting in the partial derivatives $\partial t_{ij}/\partial t = \frac{\partial}{\partial t}T_{ij}(\theta_1,\theta_2;t)$, we obtain
\begin{equation*}
\frac{\partial}{\partial t}\, \Phi_{\beta}(\theta_1,\theta_2;t)\,  =\, 
\sum_{i,j=1}^{2} (-1)^{i+j}\ln\left(\frac{t_{ij}}{|\Lambda_{ij}|}\right)
+
\sum_{i,j=1}^{2} (-1)^{i+j} \ln\left(\frac{1-e^{-\beta t_{ij}}}{\beta t_{ij}}\right)
- \beta (t_{12}+t_{21})\, .
\end{equation*}
This may be rewritten as 
\begin{equation}
\label{eq:FirstDerivCalc}
\frac{\partial}{\partial t}\, \Phi_{\beta}(\theta_1,\theta_2;t)\,  =\, 
\sum_{i,j=1}^{2} (-1)^{i+j}\ln\left(\frac{1-e^{-\beta t_{ij}}}{\beta |\Lambda_{ij}|}\right)
- \beta (t_{12}+t_{21})\, .
\end{equation}
So,
taking the second derivative we obtain
\begin{equation*}
\begin{split}
\frac{\partial^2}{\partial t^2}\, \Phi_{\beta}(\theta_1,\theta_2;t)\, 
&=\, 
\sum_{i,j=1}^{2} (-1)^{i+j}\, \frac{\beta e^{-\beta t_{ij}}}{1-e^{-\beta t_{ij}}} 
\cdot \frac{\partial t_{ij}}{\partial t}
- \beta \left(\frac{\partial t_{12}}{\partial t}+\frac{\partial t_{21}}{\partial t}\right)\\
&=\, 
\sum_{i,j=1}^{2} \frac{\beta e^{-\beta t_{ij}}}{1-e^{-\beta t_{ij}}} 
+ 2\beta\, .
\end{split}
\end{equation*}
Simplifying, this does give equation (\ref{eq:2ndD}).
Note that for $\beta=0$ we interpret this as the $\beta \to 0$ limit which is $\sum_{i,j=1}^{2} 1/t_{ij}$.

(b) Equation (\ref{eq:FirstDerivCalc}) may be rewritten as 
\begin{equation}
\label{eq:FirstDerivCalc2}
\begin{split}
\frac{\partial}{\partial t}\, \Phi_{\beta}(\theta_1,\theta_2;t)\,  
&=\, \ln\left(\frac{(1-e^{-\beta t_{11}})(1-e^{-\beta t_{22}})}{(1-e^{-\beta t_{12}})(1-e^{-\beta t_{21}})}\right)
- \ln\left(e^{\beta t_{12}} e^{\beta t_{21}}\right)\\
&=\, \ln\left(\frac{(1-e^{-\beta t_{11}})(1-e^{-\beta t_{22}})}{(e^{\beta t_{12}}-1)(e^{\beta t_{21}}-1)}\right)\, .
\end{split}
\end{equation}
Therefore, the critical point equation is (\ref{eq:CritPt}).
\end{proof}
Now let us prove Theorem \ref{thm:main}

\begin{proofof}{\bf Proof of Theorem \ref{thm:main}:}
Firstly, we note that $\frac{\partial^2}{\partial t^2}\Phi_{\beta}(\theta_1,\theta_2;t)$ is manifestly positive on $\mathcal{I}_{\theta_1,\theta_2}^o$. Therefore, $\Phi_{\beta}(\theta_1,\theta_2;\cdot) : \mathcal{I}_{\theta_1,\theta_2} \to \R$ is strictly convex.
Next we attempt to solve (\ref{eq:CritPt}).
We recall that $t_{ij} = T_{ij}(\theta_1,\theta_2;t)$. In particular,
$$
t_{11}\, =\, t\, ,\quad
t_{12}\, =\, \theta_1 - t\, ,\quad
t_{21}\, =\, \theta_2 - t\, ,\quad
t_{22}\, =\, 1-\theta_1-\theta_2+t\, .
$$
So,  if we define
$$
u\, =\, 1-e^{-\beta t_{11}}\, =\, 1-e^{-\beta t}\, .
$$
Then we get
\begin{gather*}
1-e^{-\beta t_{22}}\, =\, 1 - e^{\beta(\theta_1+\theta_2-1)} e^{-\beta t}\, =\, 1 - e^{\beta(\theta_1+\theta_2-1)} + e^{\beta(\theta_1+\theta_2-1)} u\, ,\\
e^{\beta t_{12}} - 1\, =\, e^{\beta \theta_1} e^{-\beta t} - 1\, =\, e^{\beta \theta_1} (1-e^{-\beta \theta_1}-u)\, ,\\
e^{\beta t_{21}} - 1\, =\, e^{\beta \theta_2} e^{-\beta t} - 1\, =\, e^{\beta \theta_2} (1-e^{-\beta \theta_2}-u)\, .
\end{gather*}
So (\ref{eq:CritPt}) is equivalent to
\begin{equation}
\frac{u \left(1 - e^{\beta(\theta_1+\theta_2-1)} + e^{\beta(\theta_1+\theta_2-1)} u\right)}
{ e^{\beta (\theta_1+\theta_2)}(1-e^{-\beta \theta_1}-u)(1-e^{-\beta \theta_2}-u)}\, =\, 1\, .
\end{equation}
The equation is equivalent to 
$$
u \left(e^{-\beta(\theta_1+\theta_2)} - e^{-\beta} + e^{-\beta} u\right)\,
=\, (1-e^{-\beta \theta_1}-u)(1-e^{-\beta \theta_2}-u)\, .
$$
Doing one more step of simplication, we obtain the equivalent formulation
$$
(1-e^{-\beta}) u^2 - [(1-e^{-\beta \theta_1}) + (1-e^{-\beta \theta_2}) + e^{-\beta \theta_1} e^{-\beta \theta_2} - e^{-\beta}] u  + (1-e^{-\beta \theta_1})(1-e^{-\beta \theta_2})\, =\, 0\, .
$$
We may simplify this as
$$
(1-e^{-\beta}) u^2 - [(1-e^{-\beta \theta_1})(1-e^{-\beta \theta_2}) + (1 - e^{-\beta})] u  + (1-e^{-\beta \theta_1})(1-e^{-\beta \theta_2})\, =\, 0\, .
$$
Or, splitting the polynomial,
$$
\left[(1-e^{-\beta}) u - (1-e^{-\beta \theta_1})(1-e^{-\beta \theta_2})\right](u-1)\, =\, 0\, .
$$
There are two solutions in the complex plane: $u=1$ which will not correspond to $u = 1-e^{-\beta t}$ for any $t \in \mathcal{I}_{\theta_1,\theta_2}$,
and
$$
u\, =\, \frac{(1-e^{-\beta \theta_1})(1-e^{-\beta \theta_2})}{1-e^{-\beta}}\, .
$$
This leads to the formula (\ref{eq:Rdefin}).

We should check that $t = R_{\beta}(\theta_1,\theta_2)$ is in $\mathcal{I}_{\theta_1,\theta_2}$.
First, we may calculate
\begin{equation}
\label{eq:rMixedDef}
\frac{\partial}{\partial \theta_2}\, R_{\beta}(\theta_1,\theta_2)\, =\, r^{\mathrm{cdf},\mathrm{pdf}}_{\beta}(\theta_1,\theta_2)\, =\,
\frac{e^{-\beta \theta_2} (1-e^{-\beta \theta_1})}{(1-e^{-\beta}) - (1-e^{-\beta \theta_1})(1-e^{-\beta \theta_2})}\, .
\end{equation}
It is easy to see that 
$R_{\beta}(\theta_1,0) = 0$ for all $\theta_1 \in (0,1)$. Therefore, we do have
\begin{equation}
\label{eq:firstIntegral}
R_{\beta}(\theta_1,\theta_2)\, =\, \int_0^{\theta_2} r^{\mathrm{cdf},\mathrm{pdf}}_{\beta}(\theta_1,y)\, dy\, .
\end{equation}
The second derivative calculation is slightly more involved, involving fractions:
\begin{align*}
\frac{\partial}{\partial \theta_1}\, r^{\mathrm{cdf},\mathrm{pdf}}_{\beta}(\theta_1,\theta_2)\, 
&=\,
\frac{\partial}{\partial \theta_1}\, \frac{e^{-\beta \theta_2} (1-e^{-\beta \theta_1})}{(1-e^{-\beta}) - (1-e^{-\beta \theta_1})(1-e^{-\beta \theta_2})}\\
&=\,
e^{-\beta \theta_2}\, \frac{\partial}{\partial \theta_1}\, \frac{ (1-e^{-\beta \theta_1})}{(1-e^{-\beta}) - (1-e^{-\beta \theta_1})(1-e^{-\beta \theta_2})}\\\
&=\,
\beta e^{-\beta \theta_2} e^{-\beta \theta_1}\Big(\frac{1}{(1-e^{-\beta}) - (1-e^{-\beta \theta_1})(1-e^{-\beta \theta_2})}\\[5pt]
&\hspace{4cm}
+ \frac{ (1-e^{-\beta \theta_1})(1-e^{-\beta \theta_2})}{[(1-e^{-\beta}) - (1-e^{-\beta \theta_1})(1-e^{-\beta \theta_2})]^2}\Big)\\
&=\, \frac{\beta (1-e^{-\beta})e^{-\beta \theta_1} e^{-\beta \theta_2}}{[(1-e^{-\beta}) - (1-e^{-\beta \theta_1})(1-e^{-\beta \theta_2})]^2}\, .
\end{align*}
This is manifestly positive for all $\beta \in \R$. (At $\beta=0$ this equals $1$ by taking limits.)
Moreover, from (\ref{eq:rMixedDef})
$r^{\mathrm{cdf},\mathrm{pdf}}_{\beta}(0,\theta_2) = 0$.
So
$$
r^{\mathrm{cdf},\mathrm{pdf}}_{\beta}(\theta_1,\theta_2)\, 
=\, \int_0^{\theta_1} \frac{\beta (1-e^{-\beta})e^{-\beta x} e^{-\beta \theta_2}}{[(1-e^{-\beta}) - (1-e^{-\beta x})(1-e^{-\beta \theta_2})]^2}\, dx\, .
$$
Putting this together with (\ref{eq:firstIntegral}), we see that
$$
R_{\beta}(\theta_1,\theta_2)\, =\, \int_{[0,\theta_1]\times [0,\theta_2]}
\frac{\beta (1-e^{-\beta})e^{-\beta x} e^{-\beta y}}{[(1-e^{-\beta}) - (1-e^{-\beta x})(1-e^{-\beta y})]^2}\, dx\, dy\, .
$$
Direct calculation of $R_{\beta}(\theta_1,1)$ and $R_{\beta}(1,\theta_2)$ shows that it does have the correct marginals.
So, it does follow that $t = R_{\beta}(\theta_1,\theta_2)$ is in $\mathcal{I}_{\theta_1,\theta_2}^o$.
\end{proofof}

\section{Outlook and extensions}
\label{sec:Outlook}

The result that we have presented here is stronger than a weak law of large numbers that was previously proved by one of the authors \cite{Starr}.
Moreover, the present argument is simpler, since the old argument used uniqueness theory for a certain type of partial differential equation.
The old proof was not direct.

However, the old result, weak as it is, was useful in a subsequent work by Mueller and one of the authors \cite{MuellerStarr}.
That was a weak law for the length of the longest increasing subsequence in a Mallows distributed random permutation, when $q=q_n$
scales such that $1-q_n \sim \beta/n$ as $n \to \infty$, for some $\beta \in \R$.
Bhatnagar and Peled considered the more general case that $q_n$ may scale with $n$ in a more singular way.
It is therefore interesting to look for a more exact type of result than what has been presented in this article.

The following result is true, and we will prove it in some detail in Appendix \ref{app:fourSQUAREProof}.
\begin{lemma}
\label{lem:fourSQUARE}
Let us define $\{n\}!\, :=\, [n]_q!/n!$. Let us define
$$
\Sigma_4(n)\, =\, \{(n_{11},n_{12},n_{21},n_{22}) \in \{0,1,\dots\}^4\, :\, n_{11}+n_{12}+n_{21}+n_{22}=n\}\, .
$$
Then for each such 4-tuple, defining $\mathcal{P}_{n,\beta}(n_{11},n_{12},n_{21},n_{22};\theta_1,\theta_2)$ to be
$$
\mu_{n,\beta}\left(\left\{\big((x_1,y_1),\dots,(x_n,y_n)\big) \in ([0,1]^2)^n\, :\, 
\frac{1}{n} \sum_{k=1}^n \delta_{(x_k,y_k)} \in W_{\theta_1,\theta_2}\left(\frac{n_{11}}{n},\frac{n_{12}}{n},\frac{n_{21}}{n},\frac{n_{22}}{n}\right)\right\}\right)\, ,
$$
we have the dependence on $\beta$, versus the usual multinomial formula for $\beta=0$:
\begin{align*}
\mathcal{P}_{n,\beta}(n_{11},n_{12},n_{21},n_{22};\theta_1,\theta_2)
&=\, \mathcal{P}_{n,0}(n_{11},n_{12},n_{21},n_{22};\theta_1,\theta_2)\\
&\hspace{-1cm} \times
q^{n_{12} n_{21}}\,
\frac{\{n_{11}+n_{12}\}!\{n_{11}+n_{21}\}!\{n_{12}+n_{22}\}!\{n_{21}+n_{22}\}!}
{\{n_{11}\}!\{n_{12}\}!\{n_{21}\}!\{n_{22}\}!\{n_{11}+n_{12}+n_{21}+n_{22}\}!}\Bigg|_{q=\exp(-\beta/(n-1))}\, .
\end{align*}
\end{lemma}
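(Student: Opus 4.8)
The plan is to reduce the statement, via the already-established dictionary between $\mu_{n,\beta}$ and the Mallows measure, to a single expectation of $q^{\operatorname{inv}}$ under Lebesgue measure conditioned on a fixed assignment of the $n$ points to the four sub-squares, and then to exploit the fact that on that conditional probability space the inversion number splits into mutually independent pieces, each of which sums against $q^{(\cdot)}$ to a $q$-factorial or to a Gaussian binomial coefficient. Concretely, recall from the computation in the proof of the lemma expressing $p_n(\beta)$ in terms of $[n]_q!/n!$ that, with $q=\exp(-\beta/(n-1))$, one has $e^{-\beta H_n((x_1,y_1),\dots,(x_n,y_n))}=q^{\operatorname{inv}(\pi)}$, where $\operatorname{inv}(\pi)$ is the number of discordant pairs $(x_i-x_j)(y_i-y_j)<0$ of the configuration (equivalently, the inversion number of its order type), and that $Z_n(\beta)=[n]_q!/n!=\{n\}!$. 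Since $\operatorname{inv}$ is a symmetric function of the $n$ points, the event in the definition of $\mathcal{P}_{n,\beta}$ is a disjoint union over the $\binom{n}{n_{11},n_{12},n_{21},n_{22}}$ label-to-sub-square assignments, each contributing the same integral, and at $\beta=0$ this gives the multinomial $\mathcal{P}_{n,0}=\binom{n}{n_{11},n_{12},n_{21},n_{22}}\,|\Lambda_{11}|^{n_{11}}|\Lambda_{12}|^{n_{12}}|\Lambda_{21}|^{n_{21}}|\Lambda_{22}|^{n_{22}}$. So, fixing one assignment $E_0$ (the first $n_{11}$ points in $\Lambda_{11}$, the next $n_{12}$ in $\Lambda_{12}$, and so on), it suffices to prove
\[
\mathbf E\bigl[q^{\operatorname{inv}}\,\big|\,E_0\bigr]\;=\;q^{\,n_{12}n_{21}}\,\frac{\{n_{11}+n_{12}\}!\,\{n_{11}+n_{21}\}!\,\{n_{12}+n_{22}\}!\,\{n_{21}+n_{22}\}!}{\{n_{11}\}!\,\{n_{12}\}!\,\{n_{21}\}!\,\{n_{22}\}!}\,,
\]
the conditional expectation being with respect to $(\lambda^{\otimes2})^{\otimes n}$ restricted and normalized on $E_0$; dividing by $Z_n(\beta)=\{n\}!$ and comparing with $\mathcal{P}_{n,0}$ then yields the asserted formula.

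Next I decompose $\operatorname{inv}$ for a configuration in $E_0$ according to which sub-squares the two points of a discordant pair belong to. A short case check over the six unordered pairs of sub-squares shows: every $\Lambda_{12}$--$\Lambda_{21}$ pair is discordant and no $\Lambda_{11}$--$\Lambda_{22}$ pair is (contributing the deterministic $n_{12}n_{21}$ and $0$); each pair inside a single $\Lambda_{ij}$ contributes $\operatorname{inv}(\sigma^{(ij)})$, where $\sigma^{(ij)}\in S_{n_{ij}}$ is the order type of the $\Lambda_{ij}$-points; and each of the four ``adjacent'' pairs of sub-squares --- those sharing an $x$-interval ($\Lambda_{11}$--$\Lambda_{12}$ and $\Lambda_{21}$--$\Lambda_{22}$) or a $y$-interval ($\Lambda_{11}$--$\Lambda_{21}$ and $\Lambda_{12}$--$\Lambda_{22}$) --- contributes the inversion number of an interleaving word recording how the two corresponding groups of points alternate along the shared coordinate interval. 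Thus $\operatorname{inv}=n_{12}n_{21}+\sum_{i,j}\operatorname{inv}(\sigma^{(ij)})+\operatorname{inv}(w^{x}_{1})+\operatorname{inv}(w^{x}_{2})+\operatorname{inv}(w^{y}_{1})+\operatorname{inv}(w^{y}_{2})$. The measure-theoretic heart is the observation that, conditionally on $E_0$, the $x$-coordinates of the points with $x\le\theta_1$, those with $x>\theta_1$, the $y$-coordinates with $y\le\theta_2$, and the $y$-coordinates with $y>\theta_2$ form four independent i.i.d.-uniform samples; decomposing a uniform total order on a two-block set into (block-internal orders, interleaving) yields independent uniform factors, and $\sigma^{(ij)}$ is the (uniform, independent) composition of the internal $x$- and $y$-orders of $\Lambda_{ij}$. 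Consequently the four $\sigma^{(ij)}$ and the four words $w^{x}_{1},w^{x}_{2},w^{y}_{1},w^{y}_{2}$ are mutually independent and uniform, so $\mathbf E[q^{\operatorname{inv}}\mid E_0]$ factors as $q^{n_{12}n_{21}}$ times a product of eight one-dimensional expectations.

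Then I evaluate the factors. For $\sigma$ uniform on $S_m$, $\mathbf E[q^{\operatorname{inv}(\sigma)}]=[m]_q!/m!=\{m\}!$ by the Diaconis--Ram identity (\ref{eq:DiaconisRam}); and for the interleaving of $a$ and $b$ tokens, $\mathbf E[q^{\operatorname{inv}}]=\binom{a+b}{a}_q\big/\binom{a+b}{a}=\{a+b\}!/(\{a\}!\,\{b\}!)$, where $\binom{a+b}{a}_q=[a+b]_q!/([a]_q!\,[b]_q!)$ and $\sum_{\text{words}}q^{\operatorname{inv}}=\binom{a+b}{a}_q$ is MacMahon's identity (the orientation convention being irrelevant by the palindromy of $\binom{a+b}{a}_q$). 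The token counts of the four interleavings are $(n_{11},n_{12})$, $(n_{21},n_{22})$, $(n_{11},n_{21})$, $(n_{12},n_{22})$, so each $\{n_{ij}\}!$ occurs once in a numerator (from $\operatorname{inv}(\sigma^{(ij)})$) and twice in denominators; after cancellation this is exactly the displayed right-hand side for $\mathbf E[q^{\operatorname{inv}}\mid E_0]$, and dividing by $\{n\}!$ completes the argument (the degenerate cases where some $n_{ij}=0$ require nothing extra, since then the corresponding $\sigma^{(ij)}$ and interleavings are trivial and the $q$-factorials equal $1$).

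I expect the only genuinely delicate point to be the independence/uniformity assertion in the second step --- that the interleavings and the sub-permutations really do decouple into independent uniform objects under $(\lambda^{\otimes2})^{\otimes n}(\,\cdot\mid E_0)$, with no residual correlation --- together with the accompanying bookkeeping that each adjacent cross-term is an honest word-inversion statistic (so that it sums to a Gaussian binomial). The remaining ingredients, namely the case analysis of discordant pairs and the $q$-factorial algebra at the end, are routine.
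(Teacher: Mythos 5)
Your proof is correct and follows essentially the same route as the paper: the key decomposition of the inversion count by which pair of sub-squares $(\Lambda_{ij},\Lambda_{IJ})$ the two endpoints lie in, yielding the deterministic $n_{12}n_{21}$, the four internal $\operatorname{inv}(\sigma^{(ij)})$, and the four ``adjacent'' cross-square contributions that sum to Gaussian binomials via the shuffle identity of Lemma~\ref{lem:Shuffle}, is exactly the paper's argument. The only real difference is presentational: you organize the computation as a product of independent conditional expectations under $(\lambda^{\otimes 2})^{\otimes n}(\cdot\mid E_0)$, whereas the paper constructs a representative configuration parametrized explicitly by shuffle permutations $\pi^{(i:j)}$ and internal permutations $\pi^{(i,j)}$ and then sums over that combinatorial data.
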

This may be proved using similar symmetries to those already on display here, except at the discrete level.

This leads to a quantitative version of the results on display here, sufficient to establish a local central limit theorem.

But moreover, it may be possible that this applies for $q=q_n$ scaling with $n$ in a more singular way than $1-q_n \sim c/n$.
More precisely, Moak has obtained a full asymptotic expansion for the $q$-factorial numbers such as $\{n\}_q$ in \cite{Moak}.
When $q=\exp(-\beta/(n-1))$ the leading order part does lead to the large deviation function formulas we have derived here.
But there are also correction terms, and more generally the lower order terms may be relevant when $q$ is not scaling as $1-\beta n^{-1}$ for some finite $\beta \in \R$.
That might be useful for trying to further analyze models considered by Bhatnagar and Peled in \cite{BhatnagarPeled}.
In a result related to \cite{MuellerStarr}, we are considering a 9-square problem, where the middle square is small, having linear size on the order
of $1/n^{1/4}$.
That analysis is currently underway, and we hope to report on it soon, in another article.

\subsection{Connection to physics}
The connection to physics is also of possible interest.
There is a direct relation between the Mallows measure and the ground states of the $\mathrm{SU}_q(2)$-invariant XXZ model
with Ising-type anisotropy, and kink boundary conditions.
This has been described in \cite{Starr} in detail, so let us summarize quickly, here. 

Actually,
the quickest summary is in terms of the invariant measure for the asymmetric exclusion process on the linear chain $\{1,\dots,n\}$
with no boundary conditions: i.e., no birth or death of particles at the two ends.
Then the particle number is conserved and there are a number of ergodic components.
There are $n+1$ invariant measures, corresponding to possible particle numbers.
One may write them down in terms of the Mallows measure.
This is connected to the XXZ model because the ASEP and the XXZ model are equivalent on the linear chain $\{1,\dots,n\}$,
with the boundary conditions prescribed by the words, above.
Let us quickly describe the equivalence without writing down formulas for the Hamiltonian of the XXZ model
or the generator of the ASEP.

Let $\mathcal{X}_n$ denote the set of all occupation functions $\eta : \{1,\dots,n\} \to \{0,1\}$.
For $k \in \{0,1,\dots,n\}$,
let $\mathcal{X}_{n,k}$
denote the set of all occupation functions $\eta$ such that $\sum_{i=1}^{n} \eta(i)=k$.
Let $(P^t)_{t\geq 0}$ denote the transition kernel for the continuous time ASEP after a time $t$,
so that if $\mu:\mathcal{X}_n \to \R$ defines a probability measure for an initial configuration $\eta_0$,
and $f : \mathcal{X}_n \to \R$ is a function, such that $\E^{\mu}[f(\eta_0)] = \sum_{\eta \in \mathcal{X}^n} \mu(\eta) f(\eta)$,
then we have $\E^{\mu}[f(\eta_t)] = \sum_{\eta \in \mathcal{X}^n} \sum_{\eta' \in \mathcal{X}^n} \mu(\eta) P^t(\eta,\eta') f(\eta')$.
Then $X = \frac{d}{dt} P^t\big|_{t=0}$ is the generator of the ASEP, which is a matrix on the vector space $\Omega_n$ whose basis is 
the set of occupation functions $\eta \in \mathcal{X}_n$, which has dimension $2^n$.

The XXZ model is a quantum spin system, for spins (of particles which themselves do not move) with spin $1/2$. 
So, when there are $n$ spins, it is given by a Hermitian matrix acting on a finite-dimensional Hilbert space of dimension $2^n$,
typically thought of as $\Hil_n = \C^2 \otimes \cdots \otimes \C^2$, with $\C^2$ having basis $\ket{\uparrow}$, $\ket{\downarrow}$,
corresponding to physicists' notation for two possible states of a spin-$1/2$ particle (such as an electron's spin).
But, instead if we think of the basis of the $i$th tensor factor as $\ket{\eta(i)}$ for the two possibilities $\eta(i)\in\{0,1\}$,
then there is actually an isomorphism between the two vector spaces $\Omega_n$ and $\Hil_n$.

Under the natural isomorphism $X_n$ is not (necessarily) mapped to the Hermitian matrix which is the Hamiltonian of the system.
But it is equivalent to it, in terms of the spectrum.
Similarly, $P^t$ is not necessarily a symmetric matrix.
But the Markov chain is reversible.
So there is a similarity transformation of $P^t$, as a matrix, which transforms it into a symmetric matrix.
The similarity transformation involves the invariant measures, themselves, on each of the ergodic components.

Now, for the invariant measures, on $\mathcal{X}_{n,k}$, let us define a probability mass function $P'_{n,k,q}$ which is given in
terms of the Mallows model as
\begin{equation*}
    P'_{n,k,q}(\eta)\, =\, \sum_{\pi \in S_n} P_{n,q}(\pi) \mathbf{1}(\{\eta = H^{(k)}(\pi)\})\, ,\quad \text{ where }\
    H^{(k)}_{\pi}(i)\, =\, \mathbf{1}_{\pi(1),\dots,\pi(k)}(j)\, ,\ \text{ for $i \in \{1,\dots,n\}$,}
\end{equation*}
where $P_{n,q}$ is the Mallows measure defined in (\ref{eq:Pdef}).
Then $P'_{n,k,q}$ is the invariant measure for the ASEP with $k$ particles and $n-k$ holes on $\{1,\dots,n\}$
Therefore, by a simple transformation, it is directly related to the ground state of the XXZ Hamiltonian
with $k$ spins $\uparrow$ and $n-k$ spins $\downarrow$.
So questions about the ground state of the XXZ Hamiltonian may be answered in terms of the Mallows measure.

One question that can be asked is this: suppose that there are $k$ spins $\uparrow$, but for some $j \in \{1,\dots,n\}$
we ask that there are exactly $r$ spins $\uparrow$ inside the interval $\{1,\dots,j\}$ (and therefore
$k-r$ spins $\uparrow$ in the complementary interval $\{j+1,\dots,n\}$).
What is the probability of this ``event,''
meaning the expected value of the observable which is diagonal in the spin basis corresponding to the indicator function of this set?
This is precisely the question that is answered by the four-square problem.

Such questions in quantum spin systems are not totally irrelevant.
For example, for the Heisenberg antiferromagnet, the ground state is much more complicated than for the ferromagent.
(We have described the XXZ ferromagnet, above.)
For the antiferromagnet it became an interesting question for physicists to explore the probability that in a given interval there
are 0 spins $\uparrow$.
Vladimir Korepin called this the emptiness formation probability and initiated its study.
Through difficult work, physicists obtained explicit formulas \cite{KorepinEtAl}.
Even rough bounds are somewhat difficult if one demands mathematical rigor \cite{CrawfordNgStarr}.

In comparison, the explicit formulas we obtained are simple, primarily because the Mallows measure is not
related to the antiferromagnet but to the ferromagnet.

Finally, let us state an important fact.
The function $H^{(0)}_{\pi},H^{(1)}_{\pi},\dots,H^{(n)}_{\pi}$ are known as the height functions for the measure $\pi$,
as defined by Wilson in \cite{Wilson}.
Knowing them completely recovers $\pi$.
There is also a Markov chain on $S_n$ such that, using these height functions, the ASEP is a projection.
This Markov chain was defined by Diaconis and Ram \cite{DiaconisRam}, and is a generalization of the stirring process (which
is what it reduces to when $q=1$, see for example \cite{Liggett}).
It is challenging to relate properties of the ASEP, as a stochastic process, back to the Markov chain of Diaconis and Ram.
For example, this is what was done by Benjamini, Berger, Hoffman and Mossel \cite{BBHM}.

One can ask for the simpler problem: to deduce formulas for the Mallows measure from formulas for the invariant measures of the ASEP, which are well-known (see for example \cite{Liggett}).
In particular, one can ask precisely the question that this paper has answered: for a scaling limit of the Mallows measure
using the well-known scaling limit of the invariant measures of the ASEP.
In a private communication one of us (S.S.)\ learned that Wilson had done this, just around the same time
that the genesis of the idea for \cite{Starr} had occurred to us.
It is possible that the present article is just a formulation of Wilson's approach!
(We never inquired about the specific details.)

In any case, Wilson's height functions are certainly the key idea in the background for the present article.
Moreover, they were already used by BBHM to do something much more non-trivial.

\subsection{Explication of the optimizer}

At this point, we may revisit the formulas in Subsection \ref{subsec:demo}, and input the actual formulas.
In other words we can now be more explicit about the unique solution $\nu$ to the optimization problem
\begin{equation*}
    \widetilde{\Phi}_{\beta}(\theta_1,\theta_2;t_{11},t_{12},t_{21},t_{22})\, =\, \min_{\nu \in W_{\theta_1,\theta_2}(t_{11},t_{12},t_{21},t_{22})}
    \mathcal{I}_{\beta}(\nu)\, .
\end{equation*}
We consider $\nu^{(1,1)}$.
From (\ref{eq:first2squareSumm}) we can write
\begin{equation}
    \widetilde{\nu}^{(1,1)}_Y(dy)\, =\, \frac{t_{11}+t_{21}}{t_{11}}\, \cdot \nu^*_{\beta(t_{11}+t_{21})}
    (\{(x,y_1) \in [0,1]^2\, :\, x \in [0,t_{11}/(t_{11}+t_{21})]\, ,\ y_1 \in [y,y+dy)\})\, .
\end{equation}
But this is the density from (\ref{eq:rMixedDef}). So we have
\begin{equation}
\begin{split}
    d\widetilde{\nu}^{(1,1)}_Y(y)\, &=\, 
    \frac{t_{11}+t_{21}}{t_{11}}\, \cdot r^{\mathrm{cdf},\mathrm{pdf}}_{\beta (t_{11}+t_{21})}\left(\frac{t_{11}}{t_{11}+t_{21}}\, ,\ y\right)\, dy\\
    &=\,
    \frac{t_{11}+t_{21}}{t_{11}}\, \cdot \frac{e^{-\beta (t_{11}+t_{21})y} (1-e^{-\beta t_{11}})}{(1-e^{-\beta(t_{11}+t_{21})}) - (1-e^{-\beta t_{11}})(1-e^{-\beta (t_{11}+t_{21})y})}\, dy\, .
\end{split}
\end{equation}
What we really want, however, is the density function $\widetilde{F}^{(1,1)}_Y$.
But by (\ref{eq:firstIntegral}) this is given by $R$:
\begin{equation}
\label{eq:tildeFform1}
\begin{split}
    \widetilde{F}^{(1,1)}_Y(y)\, 
    &=\, 
    \int_0^{y}     \frac{t_{11}+t_{21}}{t_{11}}\, \cdot r^{\mathrm{cdf},\mathrm{pdf}}_{\beta (t_{11}+t_{21})}\left(\frac{t_{11}}{t_{11}+t_{21}}\, ,\ y_1\right)\, dy_1\\
    &=\, 
    \frac{t_{11}+t_{21}}{t_{11}}\, \cdot R_{\beta (t_{11}+t_{21})}\left(\frac{t_{11}}{t_{11}+t_{21}}\, ,\ y\right)\\ 
    &=\,
    -\frac{1}{\beta t_{11}}\, \ln\left(1-\frac{(1-e^{-\beta t_{11}})(1-e^{-\beta (t_{11}+t_{21})y})}{1-e^{-\beta(t_{11}+t_{21})}}\right)
\end{split}
\end{equation}
By symmetry, we can also calculate $\widetilde{F}^{(1,1)}_X(x)$ similarly
\begin{equation}
\label{eq:tildeFform2}
    \widetilde{F}^{(1,1)}_X(x)\, =\,
    -\frac{1}{\beta t_{11}}\, \ln\left(1-\frac{(1-e^{-\beta t_{11}})(1-e^{-\beta (t_{11}+t_{12})x})}{1-e^{-\beta(t_{11}+t_{12})}}\right)
\end{equation}
Now $\widehat{\nu}^{(1,1)}$ is just exactly $\nu^*_{\beta t_{11}}$: in general $\widehat{\nu}^{(i,j)}$ is just exactly $\nu^*_{\beta t_{ij}}$
from (\ref{eq:hatnuForm}).

Now we are supposed to integrate these together using $\mathfrak{N}$.
Let us define $\widetilde{\nu}^{(11)}$ to be $\mathfrak{N}(\widehat{\nu}^{(1,1)},\widetilde{F}^{(1,1)}_X,\widetilde{F}^{(1,1)}_Y)$.
This will not quite be $\nu^{(11)}$. But it is the rescaling of the support of $\nu^{(11)}$ to change the support from $\Lambda_{11}(\theta_1,\theta_2)$
to $[0,1]^2$.
So $\nu^{(1,1)}([0,\theta_1 x] \times [0,\theta_2 y])$ will equal $\widetilde{\nu}^{(1,1)}([0,x]\times [0,y])$.
Then from Definition \ref{def:frakN}, this means
\begin{equation}
\begin{split}
    \nu^{(1,1)}([0,\theta_1 x]\times [0,\theta_2 y])\,
    &=\, R_{\beta t_{11}}(\widetilde{F}^{(1,1)}_X(x),\widetilde{F}^{(1,1)}_Y(y))\\
    &=\, -\frac{1}{\beta t_{11}}\, 
    \ln\left(1-
    \frac{(1-e^{-\beta t_{11}\widetilde{F}^{(1,1)}_X(x)})(1-e^{-\beta t_{11} \widetilde{F}^{(1,1)}_Y(y)})}
    {1-e^{-\beta t_{11}}}\right)\, .
\end{split}
\end{equation}
Because of the formulas (\ref{eq:tildeFform1}) and (\ref{eq:tildeFform2}), this is
\begin{equation}
    \nu^{(1,1)}([0,\theta_1 x]\times [0,\theta_2 y])\,
    =\, -\frac{1}{\beta t_{11}}\, 
    \ln\left(1-
    \frac{(1-e^{-\beta t_{11}})(1-e^{-\beta (t_{11}+t_{12})x})(1-e^{-\beta (t_{11}+t_{21})y})}
    {(1-e^{-\beta (t_{11}+t_{12})})(1-e^{-\beta (t_{11}+t_{21})})}\right)\, .
\end{equation}
This formula can also give the formula for $\nu^{(2,2)}$ by symmetry $x\mapsto 1-x$ and $y\mapsto 1-y$.
So
\begin{multline}
    \nu^{(2,2)}([1-(1-\theta_1)x,1]\times [1-(1-\theta_2)y,1])\\
    =\, -\frac{1}{\beta t_{22}}\, 
    \ln\left(1-
    \frac{(1-e^{-\beta t_{22}})(1-e^{-\beta (t_{21}+t_{22})x})(1-e^{-\beta (t_{12}+t_{22})y})}
    {(1-e^{-\beta (t_{21}+t_{22})})(1-e^{-\beta (t_{12}+t_{22})})}\right)\, .
\end{multline}
In principle, the formulas for $\nu^{(1,2)}$ and $\nu^{(2,1)}$ can also be calculated
from these formulas using the symmetry $\beta \mapsto -\beta$, $x\mapsto 1-x$.
and the symmetry $\beta \mapsto -\beta$, $y \mapsto 1-y$.
We will leave this to the interested reader.

\begin{remark}
We are grateful to an anonymous referee for providing the impetus to write up this guide.
\end{remark}

\appendix

\section{Proof of Lemma \ref{lem:fourSQUARE}}
\label{app:fourSQUAREProof}

Let us begin with a lemma.
\begin{lemma}
\label{lem:Shuffle}
For each $n \in \N$,
the polynomial $P_{n}(q) = \sum_{\pi \in S_n} q^{\operatorname{inv}(\pi)}$ has the formula
$$
P_n(q)\, =\, \prod_{k=1}^{n} \left(\frac{1-q^k}{1-q}\right)\, \stackrel{\mathrm{def}}{=:}\, [n]_q!\, ,
$$
and for any $n \in \N$ and $k \in \{1,\dots,n\}$
$$
\sum_{\pi \in \operatorname{Sh}_{n,k}} q^{\operatorname{inv}(\pi)}\, 
=\, \frac{[n]_q!}{[k]_q! [n-k]_q!}\,
\stackrel{\mathrm{def}}{=:}\, \genfrac[]{0pt}{0}{n}{k}_q\, ,
$$
where $\operatorname{Sh}_{n,k}$ is the set of all permutations $\pi \in S_n$
such that $\pi_1<\dots<\pi_k$ and $\pi_{k+1}<\dots<\pi_n$.
\end{lemma}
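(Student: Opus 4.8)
The plan is to prove both identities at once by constructing, for each $k\in\{1,\dots,n\}$, an inversion-additive bijection between $S_n$ and $\operatorname{Sh}_{n,k}\times S_k\times S_{n-k}$. Given $\pi\in S_n$, let $A=\{\pi_1,\dots,\pi_k\}$ and let $\tau=\tau(\pi)$ be the unique element of $\operatorname{Sh}_{n,k}$ whose first block of values is $A$ (so $\tau$ lists the elements of $A$ increasingly in positions $1,\dots,k$ and the elements of $\{1,\dots,n\}\setminus A$ increasingly in positions $k+1,\dots,n$). Set $\rho=\tau^{-1}\circ\pi$. Since $\pi$ and $\tau$ both send $\{1,\dots,k\}$ onto $A$, the permutation $\rho$ maps $\{1,\dots,k\}$ to itself and $\{k+1,\dots,n\}$ to itself, so it splits as $\rho=\sigma_1\oplus\sigma_2$ with $\sigma_1\in S_k$ and $\sigma_2\in S_{n-k}$ (the latter after shifting indices down by $k$). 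The assignment $\pi\mapsto(\tau,\sigma_1,\sigma_2)$ is then a bijection, with inverse $(\tau,\sigma_1,\sigma_2)\mapsto\tau\circ(\sigma_1\oplus\sigma_2)$, and the cardinalities match since $\binom{n}{k}\,k!\,(n-k)!=n!$.

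Next I would verify that $\operatorname{inv}(\pi)=\operatorname{inv}(\tau)+\operatorname{inv}(\sigma_1)+\operatorname{inv}(\sigma_2)$ by partitioning the inversions of $\pi$ — pairs $i<j$ with $\pi_i>\pi_j$, equivalently $\tau_{\rho_i}>\tau_{\rho_j}$ — into three classes. If $i,j\le k$ then $\rho_i,\rho_j\le k$ and $\tau$ is increasing there, so $\tau_{\rho_i}>\tau_{\rho_j}\iff\rho_i>\rho_j$; these pairs contribute exactly $\operatorname{inv}(\sigma_1)$. Symmetrically, pairs with $i,j>k$ contribute $\operatorname{inv}(\sigma_2)$. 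For a cross pair $i\le k<j$ one has $\rho_i\le k<\rho_j$, so $\rho_i<\rho_j$ automatically, and the pair is an inversion of $\pi$ precisely when $(\rho_i,\rho_j)$ is an inversion of $\tau$; since $\sigma_1\oplus\sigma_2$ permutes $\{1,\dots,k\}$ and $\{k+1,\dots,n\}$ internally, the pairs $(\rho_i,\rho_j)$ that occur are exactly all pairs $(a,b)$ with $a\le k<b$, and among those the inversions of $\tau$ number $\operatorname{inv}(\tau)$ — indeed every inversion of $\tau$ is of this cross type because $\tau$ is block-increasing. Summing the three contributions gives the claim.

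Summing $q^{\operatorname{inv}}$ over the bijection then yields $P_n(q)=Q_{n,k}(q)\,P_k(q)\,P_{n-k}(q)$, where $Q_{n,k}(q):=\sum_{\tau\in\operatorname{Sh}_{n,k}}q^{\operatorname{inv}(\tau)}$. Specializing to $k=1$: an element of $\operatorname{Sh}_{n,1}$ is determined by its first value $\pi_1=m\in\{1,\dots,n\}$, and then $\operatorname{inv}(\pi)=m-1$ (the $m-1$ values below $m$ all sit, increasingly, in positions $2,\dots,n$), so $Q_{n,1}(q)=\sum_{m=1}^n q^{m-1}=[n]_q$. Hence $P_n(q)=[n]_q\,P_{n-1}(q)$, and since $P_1(q)=1=[1]_q!$, induction gives $P_n(q)=[n]_q!=\prod_{k=1}^n\frac{1-q^k}{1-q}$. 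Plugging this back into the factorization produces $Q_{n,k}(q)=P_n(q)/\bigl(P_k(q)P_{n-k}(q)\bigr)=[n]_q!/([k]_q!\,[n-k]_q!)$, which is ${n\brack k}_q$ by definition.

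The only step needing genuine care is the three-case inversion count, and within it the cross-pair class, where one must see both that the pairs $(\rho_i,\rho_j)$ with $i\le k<j$ range over all pairs $(a,b)$ with $a\le k<b$ and that every inversion of $\tau$ lies in this class; the rest is bookkeeping. As an alternative route to $P_n(q)=[n]_q!$ by itself, one can instead use the Lehmer-code bijection $\pi\leftrightarrow(c_1,\dots,c_n)$ with $0\le c_i\le n-i$ and $\operatorname{inv}(\pi)=\sum_i c_i$, which gives $P_n(q)=\prod_{i=1}^n(1+q+\cdots+q^{n-i})$; but the block decomposition above is preferable here since it delivers the $q$-factorial and the $q$-binomial formula simultaneously, which is exactly what is needed for Lemma~\ref{lem:fourSQUARE}.
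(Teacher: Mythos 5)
Your proof is correct and uses the same bijective decomposition $S_n \cong \operatorname{Sh}_{n,k}\times S_k\times S_{n-k}$ with additivity of inversions over the three factors that the paper's proof uses. The only difference is that you also derive the identity $P_n(q)=[n]_q!$ from the $k=1$ specialization of this same decomposition (and you spell out the three-case inversion count in full), whereas the paper cites the first formula as well-known and leaves the additivity ``easy fact'' to the reader.
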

\begin{proof}
The first formula is well-known. One may consult \cite{DiaconisRam}, for instance, and references, therein.

The second formula is also well-known.
But let us review the proof, since we will use the same ideas, subsequently.
Given $\pi \in S_n$, and given $k \in \{1,\dots,n\}$, denote by
$\pi^{(k)}$ the permutation such that
$$
\{\pi^{(k)}_1,\dots,\pi^{(k)}_k\}\, =\, \{\pi_1,\dots,\pi_k\}\ \text{ and }\
\{\pi^{(k)}_{k+1},\dots,\pi^{(k)}_{n}\}\, =\, \{ \pi_{k+1},\dots,\pi_n\}\, ,
$$
and such that $\pi^{(k)}$ is an element of $\operatorname{Sh}_{n,k}$.
Also, let $\pi^{(k,-)}$ denote the permutation in $S_n$ such that $\pi^{(k,-)}_{j}=j$
for $j>k$ and $\pi_j = \pi^{(k)}_{\pi^{(k,-)}_j}$ for $j \in \{1,\dots,k\}$.
Similarly, define $\pi^{(k,+)}$ such that
$\pi^{(k,+)}_{j}=j$
for $j\leq k$ and $\pi_j = \pi^{(k)}_{\pi^{(k,-)}_j}$ for $j \in \{k+1,\dots,n\}$.
Then it is an easy fact to see that
$$
\operatorname{inv}(\pi)\, =\, \operatorname{inv}(\pi^{(k)}) + 
 \operatorname{inv}(\pi^{(k,-)})
+  \operatorname{inv}(\pi^{(k,+)})\, .
$$
Moreover, defining $S_{n,k}^-$ to be the set of all permutations $\pi \in S_n$
such that $\pi_j=j$ for all $j>k$ (which is isomorphic to $S_k$)
and $S_{n,k}^+$ to be the set of all permutations $\pi \in S_n$
such that $\pi_j=j$ for all $j\leq k$ (which is isomorphic to $S_{n-k}$),
we have $S_n \cong \operatorname{Sh}_{n,k} \times S_{n,k}^- \times S_{n,k}^+$,
where the bijection is the mapping $\pi \mapsto (\pi^{(k)},\pi^{(k,-)},\pi^{(k,+)})$.
Using this, one may easily prove the second formula.
\end{proof}

With this, we may give the desired proof.

\begin{proofof}{\bf Proof of Lemma \ref{lem:fourSQUARE}:}
We are going to start with a particular construction of a set of points $((x_1,y_1),\dots,(x_n,y_n))$
such that $n^{-1} \sum_{k=1}^{n} \delta_{(x_k,y_k)}$ is in
$W_{\theta_1,\theta_2}((n_{ij}/n)_{i,j=1}^{2})$.

First of all we note that the particular ordering of $((x_1,y_1),\dots,(x_n,y_n))$
does not affect the density $\mu_{n,\beta}((x_1,y_1),\dots,(x_n,y_n))$.
Therefore, we will only describe the construction modulo a permutation in $S_n$.
We return to this point at the end.

Let us denote
$$
n^{(1:1)}_1\, =\, n^{(2:1)}_1\, =\, n_{11}\, ,\ 
n^{(1:1)}_2\, =\, n^{(2:2)}_1\, =\, n_{12}\, ,\ 
n^{(1:2)}_1\, =\, n^{(2:1)}_2\, =\, n_{21}\, ,\
n^{(1:2)}_2\, =\, n^{(2:2)}_2\, =\, n_{22}\, .
$$
Then for each $i,j \in \{1,2\}$
let us define $n^{(i:j)} = n^{(i:j)}_1 + n^{(i:j)}_2$.

Let us choose points
$$
0<\tilde{t}^{(1:1)}_1<\dots<\tilde{t}^{(1:1)}_{n^{(1)}_1}<\theta_1\, ,\qquad
\theta_1<\tilde{t}^{(1:2)}_1<\dots<\tilde{t}^{(1:2)}_{n^{(1)}_2}<1\, ,
$$
and
$$
0<\tilde{t}^{(2:1)}_1<\dots<\tilde{t}^{(2:1)}_{n^{(2)}_1}<\theta_2\, ,\qquad
\theta_2<\tilde{t}^{(2:2)}_1<\dots<\tilde{t}^{(2:2)}_{n^{(2)}_2}<1\, .
$$
Next, for each $i,j \in \{1,2\}$, let us choose a shuffle permutations
$\pi^{(i:j)} \in \operatorname{Sh}_{n^{(i:j)},n^{(i:j)}_1}$.
Then we define 
$$
\tilde{\tilde{t}}^{(i:j:1)}_k\, =\, \tilde{t}^{(i:j)}_{\pi^{(i:j)}_k}\
\text{ for $k \in \{1,\dots,n^{(i:j)}_1\}$, and }\
\tilde{\tilde{t}}^{(i:j:2)}_k\, =\, \tilde{t}^{(i:j)}_{\pi^{(i:j)}_{n^{(i:j)}_1+k}}\
\text{ for $k \in \{1,\dots,n^{(i:j)}_2\}$.}\
$$

Finally, let us choose permutations $\pi^{(i,j)} \in S_{n_{ij}}$ for each $i,j \in \{1,2\}$.
Then we define $Z_{ij} \subset \Lambda_{i,j}(\theta_1,\theta_2)$ with $|Z_{ij}|=n_{ij}$
for each $i,j \in \{1,2\}$, as 
\begin{align*}
Z_{11}\, &=\, \{(\tilde{\tilde{t}}^{(1:1:1)}_k,{\tilde{\tilde{t}}}^{(2:1:1)}_{\pi^{(1,1)}_k})\, :\, 
k = 1,\dots,n_{11}\}\, ,\\
Z_{12}\, &=\, \{(\tilde{\tilde{t}}^{(1:1:2)}_k,{\tilde{\tilde{t}}}^{(2:2:1)}_{\pi^{(1,2)}_k})\, :\, 
k = 1,\dots,n_{12}\}\, ,\\
Z_{21}\, &=\, \{(\tilde{\tilde{t}}^{(1:2:1)}_k,{\tilde{\tilde{t}}}^{(2:1:2)}_{\pi^{(2,1)}_k})\, :\, 
k = 1,\dots,n_{21}\}\, ,\\
Z_{22}\, &=\, \{(\tilde{\tilde{t}}^{(1:2:2)}_k,{\tilde{\tilde{t}}}^{(2:2:2)}_{\pi^{(2,2)}_k})\, :\, 
k = 1,\dots,n_{22}\}\, .
\end{align*}
We let $(x_1,y_1),\dots,(x_n,y_n)$ be an arbitrary choice of points (meaning an arbitrary ordering)
comprising $\bigcup_{i,j=1}^{n} Z_{ij}$.

For each $i,j,I,J \in \{1,2\}$, let us define
$$
\mathcal{N}_{(i,j),(I,J)}((x_1,y_1),\dots,(x_n,y_n))\,
=\, \#\{\{k,\ell\}\, :\, (x_k,y_k) \in \Lambda_{i,j}\, ,\ (x_{\ell},y_{\ell}) \in \Lambda_{I,J}\,
(x_k-x_{\ell})(y_k-y_{\ell})<0\}\, .
$$
Then it is easy to see that
$$
H_n\,
=\, \sum_{i,j=1}^{2} \mathcal{N}_{(i,j),(i,j)} + \mathcal{N}_{(1,1),(1,2)} + \mathcal{N}_{(1,1),(2,1)}
+\mathcal{N}_{(1,2),(2,2)} + \mathcal{N}_{(2,1),(2,2)} + \mathcal{N}_{(1,2),(2,2)}\, .
$$
We do not need to include $\mathcal{N}_{(1,1),(2,2)}$ because if $(x_k,y_k) \in \Lambda_{11}$
and $(x_{\ell},y_{\ell}) \in \Lambda_{22}$ then we must have $(x_k-x_{\ell})(y_k-y_{\ell})>0$.
For similar reasons, we have
$\mathcal{N}_{(1,2),(2,1)}((x_1,y_1),\dots,(x_n,y_n)) = n_{12} n_{21}$.
Finally, we claim that careful inspection shows that
\begin{gather*}
\mathcal{N}_{(i,j),(i,j)}\, =\, \operatorname{inv}(\pi^{(i,j)})\, ,\
\text{ for each $i,j \in \{1,2\}$}\\
\mathcal{N}_{(1,1),(1,2)}\, =\, \operatorname{inv}(\pi^{(1:1)})\, ,\quad
\mathcal{N}_{(1,1),(2,1)}\, =\, \operatorname{inv}(\pi^{(2:1)})\, ,\\
\mathcal{N}_{(1,2),(2,2)}\, =\, \operatorname{inv}(\pi^{(2:2)})\, ,\quad
\mathcal{N}_{(2,1),(2,2)}\, =\, \operatorname{inv}(\pi^{(1:2)})\, .\\
\end{gather*}
Then, using the fact that $Z_n(\beta) = [n]_q!/n!$ for $q=\exp(-\beta/(n-1))$,
we have
$$
\mu_{n,\beta}((x_1,y_1),\dots,(x_n,y_n))\, =\, \frac{n!}{[n]_q!}\,
q^{n_{12}n_{21}}\, \left[\prod_{i,j=1}^{2} q^{\operatorname{inv}(\pi^{(ij)})}\right]
\left[\prod_{i,j=1}^{2} q^{\operatorname{inv}(\pi^{(i:j)})}\right]\, .
$$
Then summing over each of the independent permutations $\pi^{(i,j)} \in S_{n_{ij}}$
for $i,j \in \{1,2\}$ and $\pi^{(i:j)} \in \operatorname{Sh}_{n^{(i:j)},n^{(i:j)}_1}$
for $i,j \in \{1,2\}$, and using Lemma \ref{lem:Shuffle}, we may deduce the result.
\end{proofof}

\section{Another derivation of the main results}

We announce here another approach to derive our main results, without providing all the details.
Firstly, there does exist a $q$-Stirling's formula just like the usual Stirling's formula \cite{Moak},
wherein one may see that the exponential term is as in (\ref{eq:pFormula}).
In fact, the $q$-Stirling's formula may be considered to be superior because the dilogarithm
arises in the leading-order exponential term rather than just in the constant term.
(In the usual
Stirling's formula the constant $\sqrt{2\pi}$ must be derived by one of a number of methods.
One method is to realize that $\ln(n!) - (n+\frac{1}{2})\ln(n)-n+\frac{1}{2} \ln(2e)=R_n$
where $R_n \to \int_0^{1/2} \ln(\pi x/\sin(\pi x))\, dx$.
Modulo elementary functions, this is equal to the dilogarithm function at $-1$, which may be evaluated
as easily as it is to calculate $\zeta(2)$.
On the other hand, since (\ref{eq:pFormula}) is also another expression for the dilogarithm of 
an exponential, we see that the dilogarithm arises in the leading-order exponential part of the $q$-Stirling
formula.)

An explicit formulation is as follows: for each $\beta \in \R$
\begin{equation}
\label{eq:qSTIRLING}
\ln\left(\frac{[n]_q!}{n!}\right)\bigg|_{q=\exp(-\beta/(n-1))}\,
=\, n \int_0^1 \ln\left(\frac{1-e^{-\beta x}}{\beta x}\right)\, dx
+ \frac{\beta}{2}
+ \frac{1}{2}\, \ln\left(\frac{1-e^{-\beta}}{\beta}\right)
+ R_n(\beta)\, ,
\end{equation}
where $R_n(\beta) \to 0$ as $n \to \infty$.
This follows from Moak's formula.
But also, a simple proof is given (by specializing to $q$'s of the form $q_n=1-n^{-1}(\beta  + o(1))$
where $o(1)\to 0$ as $n \to \infty$) in \cite{Walters}: see Theorem 5.1.1.

Using this formula, keeping only the leading-order
exponential term, and using Lemma \ref{lem:fourSQUARE}, we may prove that
\begin{multline*}
\lim_{n \to \infty} \mu_{n,\beta}\left(\left\{((x_1,y_1),\dots,(x_n,y_n))\, :\, \frac{1}{n}\, \sum_{k=1}^{n}
\delta_{(x_k,y_k)} \in W_{\theta_1,\theta_2}((\nu_{ij})_{i,j=1}^{2})\right\}\right)\\
=\, \widetilde{\Phi}_{\beta}(\theta_1,\theta_2;\nu_{11},\nu_{12},\nu_{21},\nu_{22})\, ,
\end{multline*}
the formula from Theorem \ref{thm:4square}.
But this is a harder approach.
It is proved in complete detail in \cite{Walters} in Lemma 5.6.2.
The advantage of this second approach is that by keeping track of the lower order terms in 
(\ref{eq:qSTIRLING})
one may prove local limit theorems.
We intend to do this in a later paper for the 9-square problem,
where the middle square has both sidelengths on the order of $n^{-1/4}$ as a subset of $[0,1]^2$,
so that there are order $n^{1/2}$ points in the middle square.
That is because this is what is needed to obtain the simplest bounds on the fluctuations for 
the length of the longest increasing subsequence in a Mallows random permutation when
$q_n = 1 - n^{-1}(\beta + o(1))$.
The weak limit of this problem was previously considered in \cite{MuellerStarr}.
But we will obtain essentially $O(n^{(1/4)+\epsilon})$ bounds in a paper currently in preparation.

\section*{Acknowledgments}
S.S.~is grateful to Paul Jung for some useful conversations.
We are grateful to Nayantara Bhatnagar for discussing her result in \cite{BhatnagarPeled} with us, and especially for pointing out the 
importance of obtaining quantitative versions of a weak law.
We are grateful to Sumit Mukherjee for some useful discussions.
We are also appreciative of the 
helpful corrections and improvements of two anonymous referees.

\baselineskip=12pt
\bibliographystyle{plain}

\end{document}